\newtheorem{proposition}{Proposition}[section]
\newtheorem{theorem}[proposition]{Theorem}
\newtheorem{corollary}[proposition]{Corollary}
\newtheorem{lemma}[proposition]{Lemma}
\theoremstyle{definition}
\newtheorem{definition}[proposition]{Definition}
\theoremstyle{remark}
\newtheorem{remark}[proposition]{Remark}
\newcommand{\eps}{\varepsilon}
\newcommand{\N}{{\mathbb{N}}}
\newcommand{\Z}{{\mathbb{Z}}}
\newcommand{\R}{{\mathbb{R}}}
\newcommand{\C}{{\mathbb{C}}}
\newcommand{\from}{\colon}
\newcommand{\loc}{{\mathrm{loc}}}
\newcommand{\bx}{{\mathbf{x}}}
\newcommand{\Ncal}{{\mathcal{N}}}
\newcommand{\Rcal}{{\mathcal{R}}}
\newcommand{\Scal}{{\mathcal{S}}}
\newcommand{\Tcal}{{\mathcal{T}}}
\newcommand{\Ucal}{{\mathcal{U}}}
\newcommand{\rot}{{\mathrm{rot}}}
\DeclareMathOperator{\supp}{supp}
\DeclareMathOperator{\sign}{sign}
\DeclareMathOperator{\real}{Re}
\DeclareMathOperator{\imag}{Im}
\newcommand{\cc}{S}
\newcommand{\modB}{{\Theta}}
\newcommand{\bound}{{\varphi}}
\newcommand{\var}{{t}}
\title{Rotating Spirals in segregated reaction-diffusion systems}
\author{Ariel Salort, Susanna Terracini, Gianmaria Verzini and Alessandro Zilio}
\date{\today}
\email{asalort@dm.uba.ar}
\email{susanna.terracini@unito.it}
\email{gianmaria.verzini@polimi.it}
\email{alessandro.zilio@u-paris.fr}
\address{Ariel Salort \newline \indent
Departamento. de Matem\'atica,
           FCEyN, UBA \newline \indent
           Ciudad Universitaria, Pab. 1 (1428)
           Of. 2106 \newline \indent
           Buenos Aires, Argentina}
\address{Susanna Terracini \newline \indent
 Dipartimento di Matematica ``Giuseppe Peano'', \newline \indent
Universit\`a di Torino, \newline \indent
Via Carlo Alberto, 10,
10123 Torino, Italy}
\address{Gianmaria Verzini\newline \indent
 Dipartimento di Matematica ``Francesco Brioschi'', \newline \indent
Politecnico di Milano, \newline \indent
Piazza Leonardo da Vinci, 32
20133 Milano, Italy}
\address{Alessandro Zilio \newline \indent
Université de Paris and Sorbonne Université,  \newline \indent
CNRS, Laboratoire Jacques-Louis Lions (LJLL),  \newline \indent
F-75006 Paris, France}
\keywords{Competition-diffusion systems, Singular perturbation, Free boundary problems, Spiral Waves}
\subjclass{35B25 35B36 (35K51, 92D25)}
\begin{document}
\begin{abstract}
We give a complete characterization of the boundary traces $\varphi_i$ ($i=1,\dots,K$) 
supporting spiraling waves, rotating with a given angular speed $\omega$, which appear as 
singular limits of competition-diffusion systems of the type
\begin{equation*}
	\begin{cases}
		\partial_t u_i -\Delta u_i  = \mu u_i -\beta u_i \sum_{j \neq i} a_{ij} u_j &\text{in } \Omega
		\times\R^+ \\
		u_i = \varphi_i &\text{on 
		$\partial\Omega\times\R^+$}\\
		u_i(\bx,0) = u_{i,0}(\bx) &\text{for $ \bx \in \Omega$}
	\end{cases}
\end{equation*}
as $\beta\to +\infty$. Here $\Omega$ is a rotationally invariant planar set and $a_{ij}>0$ 
for every $i$ and $j$. We tackle also the homogeneous Dirichlet and Neumann boundary 
conditions, as well as entire solutions in the plane. As a byproduct of our analysis we 
detect explicit families of eternal, entire solutions of the pure heat equation, parameterized by 
$\omega\in\R$, which reduce to homogeneous harmonic polynomials for $\omega=0$.  
\end{abstract}
\maketitle

\section{Introduction}

This paper deals with existence, uniqueness and qualitative properties of rotating spiraling waves arising in the singular limit of reaction-diffusion systems,  when the interspecific competition rates become infinite.  More precisely, we are concerned with the singular limits, 
as $\beta\to +\infty$, of the following model problem involving $K\ge3$ species competing in the plane:

\begin{equation}\label{eqn sys beta parab}
	\begin{cases}
		\partial_t u_i -\Delta u_i  = f_i(u_i) -\beta u_i \sum_{j \neq i} a_{ij} u_j &\text{in }
		\Omega\times\R^+\\
		u_i = \varphi_i &\text{on }
		\partial\Omega\times\R^+\\
		u_i(\bx,0) = u_{i,0}(\bx) &\text{for $ \bx \in \Omega$}.
	\end{cases}
\end{equation}
Here $\Omega \subset \R^2$ has a smooth boundary,  $u_i=u_i(\bx,t)$ represents the density of the $i$-th species ($1\leq i \leq K$), whose internal dynamic is described by the function $f_i$. The positive numbers $\beta a_{ij}$  account for the interspecific  competition rates, so that the interaction has a repulsive character. The boundary data $\varphi_i$ are positive and 
segregated, i.e.\ 
$\varphi_i\varphi_j\equiv0$ for $j\neq i$.

As already mentioned, we are concerned with  the limit case of strong competition, that is when the parameter $\beta$ goes to $+\infty$, while the positive coefficients $a_{ij}$ remain fixed.  In this case it is known that the densities $u_i$ \emph{segregate}, in the sense that they converge uniformly to limit densities satisfying $u_iu_j\equiv0$ for $j\neq i$; 
hence a pattern arises, and the common nodal set (where all densities vanish simultaneously) can be considered as a free boundary (see  \cite{MR2529504,CoTeVe_AM2005,CoTeVe_IUMJ2005,MR2384550} for steady states and \cite{MR2846360,MR2863857,MR1900331,MR2595199}  for time varying solutions). For such segregated limit profiles, the interface conditions are expressed by two systems of differential inequalities  which play a fundamental role in our work:  
\begin{equation}\label{esse_par}
  \partial_t u_i -\Delta u_i\leq f_i(u_i),\quad \partial_t \widehat u_i-\Delta \widehat u_i\geq \widehat f_i(\widehat u_i),
\end{equation}\label{eq:esset}
where the differential inequalities are understood in variational sense and
\begin{equation} \label{cappuccio}
\widehat u_i=u_i-\sum_{j\neq i}\dfrac{a_{ij}}{a_{ji}}u_j,
\qquad
\widehat f_i(\widehat u_i) =f( u_i)-\sum_{j\neq i}\dfrac{a_{ij}}{a_{ji}}f(u_j).
\end{equation}
These inequalities incorporate  the transmission conditions at the free boundary, that is the closure of  the interfaces $\partial\{u_i > 0\}\cap \partial\{u_j > 0\}$, which separate the supports of $u_i$ and $u_j$ at any fixed time $t$.

For planar stationary solutions, the structure of the free boundary has been the object of several papers. 
In the case of symmetric interactions ($a_{ij}=
a_{ji}$ for every $i$ and $j$) it is composed by a regular part, a collection of  smooth curves, meeting at a locally finite number of (singular) clustering points, with definite tangents (see  \cite{MR2529504,CoTeVe_AM2005, CoTeVe_IFB2006, MR2483815}). On the other hand, the asymetric 
case has been treated only more recently in \cite{MR4020313}: while the topological structure 
of the free boundary is analogous to the symmetric 
case (smooth curves meeting at isolated singular 
points), the geometric description differs strongly 
in a neighborhood of each singular point, where 
the nodal lines meet with logarithmic spiraling asymptotics. 

Going back to time-dependent systems, rotating spiraling patterns have been detected numerically in the case of three competing populations in \cite{MR2776460}.  Driven by this phenomenology, in this paper we seek rotating spirals, that is  rigidly rotating waves which are steady states of \eqref{esse_par} in a reference frame spinning with frequency $\omega$; such solutions satisfy $\partial_t u_i =\omega \partial _\theta u_i$ in a disk, subject to boundary conditions which are prescribed in the rotating frame, and exhibiting spiraling 
interfaces near the origin. Hence, in comparison with the literature, our work tackles the segregation problem from a new perspective, that is the existence of limit segregated profiles satisfying additional qualitative properties or shadowing some given shapes. On the other hand, 
the literature on other aspects of segregation triggered by strong competition, starting from pioneering works by Dancer and Du \cite{MR1312772, MR1312773},  is now very vast and it is impossible to give a complete account of it here; besides the papers quoted above, we quote a few more recent ones such as \cite{VeZi_CPDE2014,MR3651893, MR3864295,MR4026185,MR3948936,MR4298757} and we refer the interested reader to the references therein.

The rotating spiral shapes we investigate evoke some other typical example of spatio-temporal patterns arising in reaction-diffusion systems in planar domains: the spiral waves. In the simplest case, these are stationary waves in a rotating frame, while modulated spiraling waves may emanate from rigidly rotating ones in some circumstances.  Such waves arise in different models and appear in the literature about reaction-diffusion systems in contexts different from singular perturbation problems (see e.g.\ \cite{MR1438374,MR2318665,sandstede2021spiral} and references therein).  As far as we know, this is the first study on spiraling rotating waves for segregated limit profiles of competition-diffusion systems.   We also mention that  spiraling  interfaces arise in free boundary problems in entirely different contexts \cite{MR4047645}.

To construct eternal solutions of spiraling type to the limit system \eqref{esse_par}, in this paper we deal with suitable classes of reactions $f_i$ and  boundary conditions. More precisely, let us consider identical, linear reactions in the unit ball (centered at 
$\mathbf{0}$):
\[
\Omega = B, \qquad f_i(u)=\mu u,\text{ for 
some $\mu \in \R$.}
\]
We insert into \eqref{esse_par} the rotating wave ansatz
\[
	u_i(\bx,t) = u_i(\mathcal{R}_{\omega t} \bx),
\]
where
\[
	\mathcal{R}_{\omega t} = \begin{pmatrix}
		\cos( \omega t) & -\sin( \omega t)\\
		\sin( \omega t) & \cos( \omega t)
	\end{pmatrix}
\]
is the rotation matrix of angular speed $\omega$, and we
obtain the stationary system of inequalities
\begin{equation}\label{eqn Somega}
	\begin{cases}
		-\Delta u_i + \omega \bx^\perp \cdot \nabla u_i \leq \mu u_i &\text{in $B$}\\
		-\Delta \widehat u_i  + \omega \bx^\perp \cdot \nabla \widehat u_i \geq 
		\mu\widehat u_i &\text{in $B$}\\
   \phantom{-\Delta}u_i\cdot u_j=0&\text{for }i\neq j,
	\end{cases}
\end{equation}
where $\bx^\perp = \Rcal_{\pi/2}\bx$ and $\widehat u_i$ is defined in \eqref{cappuccio}. 
It is worth noting that, despite appearances, this system is strongly nonlinear and has to be tackled as a free boundary problem.

We are interested in solutions of \eqref{eqn Somega} whose nodal set consists 
in smooth arcs, emanating from $\partial B$ and spiraling towards $\mathbf{0}$, which is the 
unique singular point of the free boundary. In this way, each arc is a smooth interface between 
two adjacent densities, and the origin is 
the only point with higher multiplicity (see Fig.~\ref{fig:intro}). In 
\begin{figure}[t]
\begin{center}
\includegraphics[width=.55\textwidth]
{./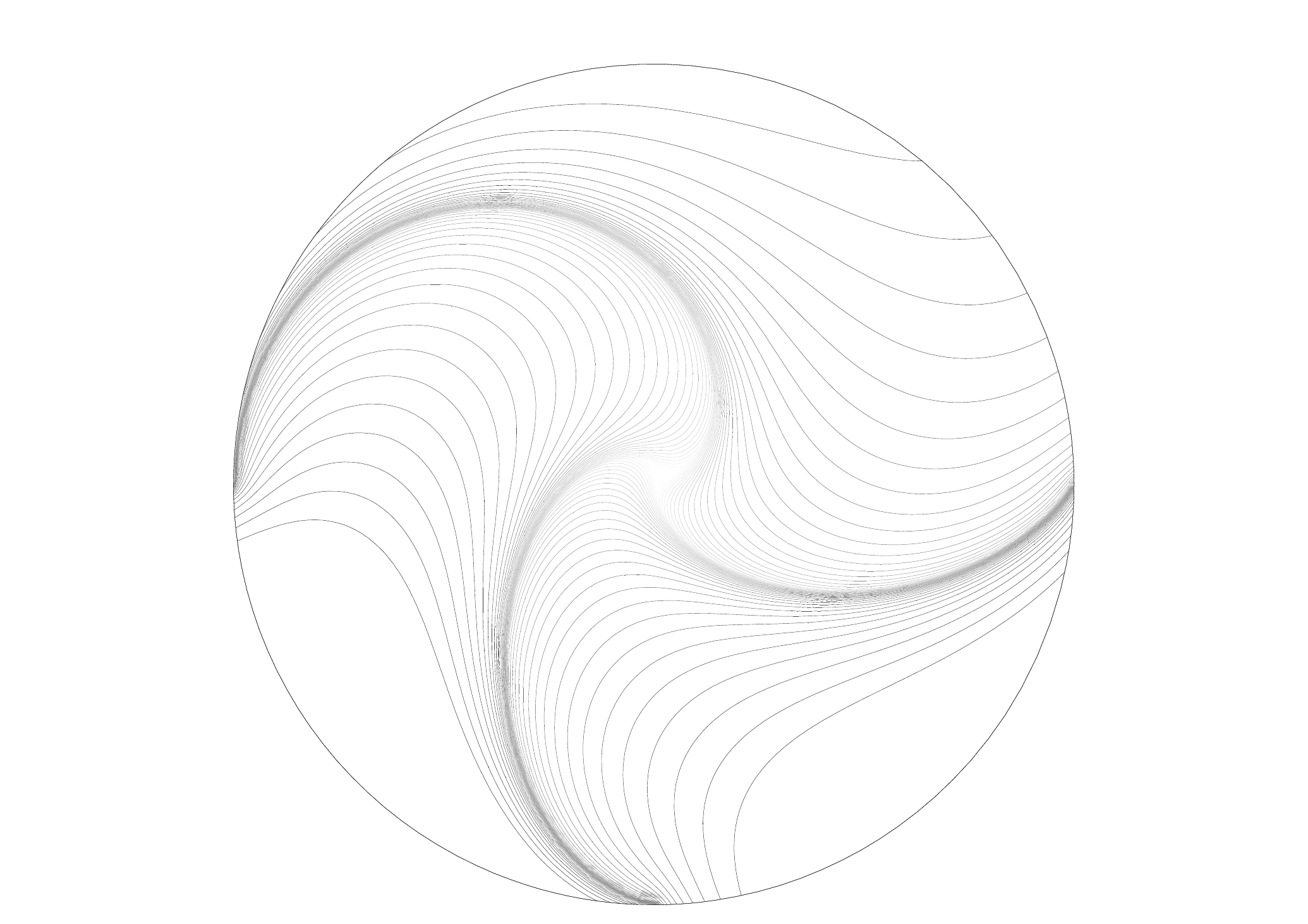} \hspace{-2cm}
\includegraphics[width=.55\textwidth]
{./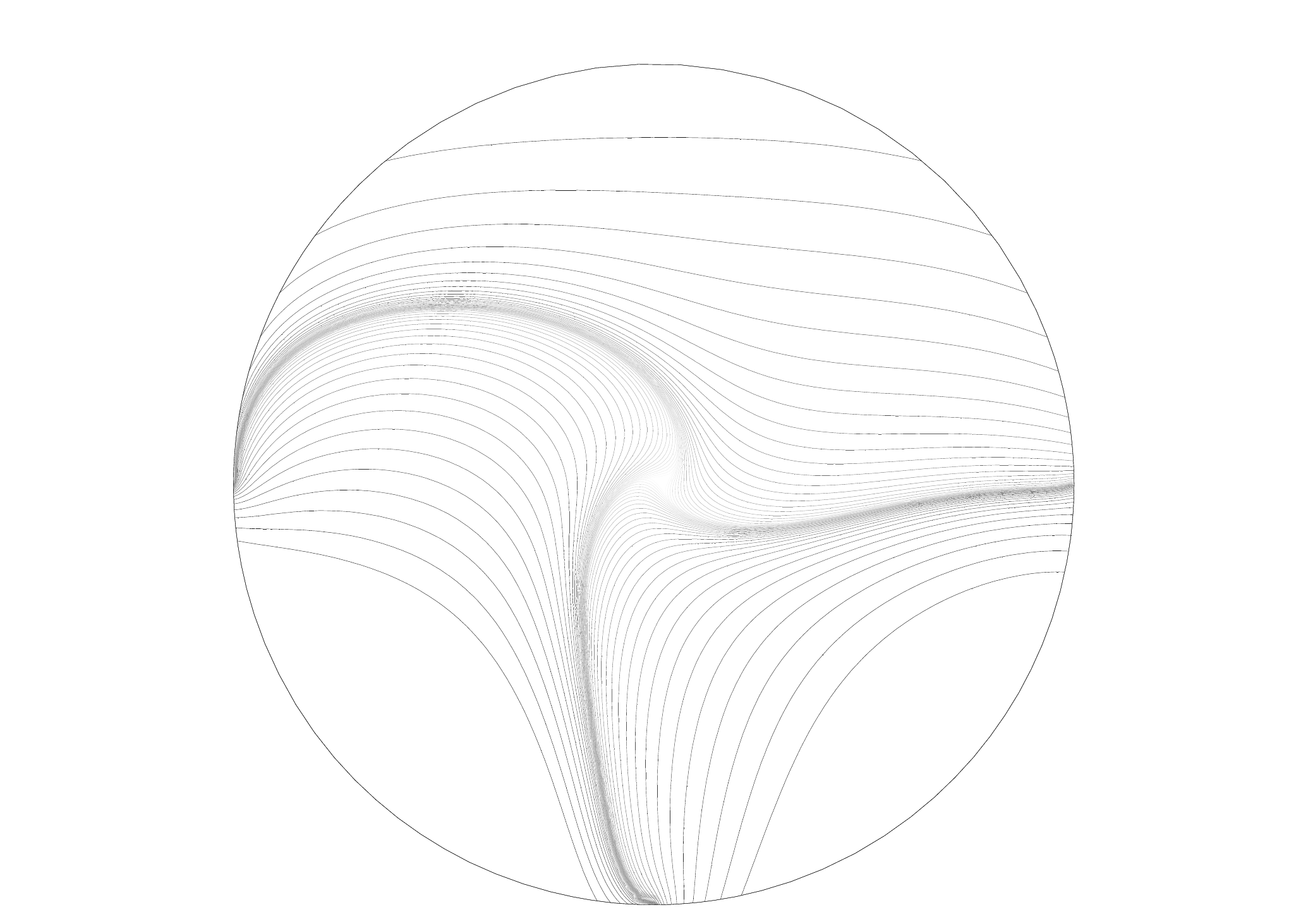}
\caption{Counter lines of a numerical simulation (obtained in FreeFem++ \cite{MR3043640}) in the case of $K=3$ densities, with asymmetric competition such that $\frac{a_{12}}{a_{21}}=\frac{a_{23}}{a_{32}}= 
\frac{a_{31}}{a_{13}}=10$, and reaction term $\mu=0$. The angular velocity is $\omega=3$ for the picture on the left (counterclockwise spin) and $\omega = -3$ for the picture on the right (clockwise spin). In both cases we obtain a unique singular point at the center of the circle by choosing the same boundary conditions, that verify the necessary and sufficient condition in Theorem \ref{thm:main_intro} (see equation \eqref{eq:compX3}). The rotation affects the shape of the spirals, but not their asymptotic behavior close to the center. This is part of the content of Theorem \ref{thm:main_intro}.}
\label{fig:intro}
\end{center}
\end{figure}
this framework we provide a complete description of the non-homogeneous Dirichlet problem associated with \eqref{eqn Somega}.

Let us consider a $K$-tuple  $(\varphi_1,\dots,\varphi_K)$ of segregated boundary traces. 
Precisely, we assume that, for every $i=1,\dots,K$,
\begin{equation}\label{eq:ass_fi_i}
\begin{cases}
\varphi_i\in C^{0,1}(\partial B),\ \varphi_i \geq 0,\smallskip\\
\{\bx:\varphi_i(\bx)>0\}\text{ are connected, non-empty and disjoint arcs,}\smallskip\\
\bigcup_{i}\supp\varphi_i = \partial B.
\end{cases}
\end{equation}
Up to relabelling, we can assume that the traces $\varphi_i$ are labeled in counterclockwise 
order. 

In general, it is not reasonable to expect that any choice of the boundary data 
provides a solution of \eqref{eqn Somega} with a unique singular point at $\mathbf{0}$. 
Indeed, we show that this happens exactly for an explicit subset having codimension 
$K-1$ in the space of traces. Let $s=(s_1,\dots,s_K)\in\R^K$, with $s_i>0$ for all $i$, 
and let us consider the class of functions
\begin{equation}\label{eq:S_om}
\Scal_{\rot} =
 \left\{U=(u_1,\cdots,u_K)\in (H^1(B))^K:\,
  \begin{array}{l}
   u_i\geq0\text{ satisfy \eqref{eqn Somega},}\smallskip\\  
   u_i=s_i\varphi_i\mbox{ on }\partial B
  \end{array}
 \right\}.
\end{equation}
To state our main result we introduce the parameter
\begin{equation}\label{eq:def_alfa}
\alpha = \frac{1}{2\pi}\ln\left(\frac{a_{12}}{a_{21}}\cdot\frac{a_{23}}{a_{32}}
\cdots  \frac{a_{K1}}{a_{1K}}\right),
\end{equation}
which synthesizes the asymmetry of the coefficients $a_{ij}$ (see \cite{MR4020313} for more 
details). 

Our main result is the following theorem.
\begin{theorem}\label{thm:main_intro} 
Let $K\ge 3$, $a_{ij}>0$, $\omega\in\R$. Assume that $\mu<\pi^2$ and 
$(\varphi_1,\dots,\varphi_K)$ satisfies \eqref{eq:ass_fi_i}. There
exists 
\[
\bar s=(\bar s_1,\dots,\bar s_K)\in\R^K, 
\]
independent of $\mu$ and $\omega$, with $\bar s_i>0$ for all $i$, such that:
\begin{enumerate}
\item If $s=t\bar s$ for some $t>0$, then $\Scal_\rot$ contains an element with a unique singular point at $\mathbf{0}$. Moreover such element is unique and, denoting with 
$\Ucal$ a suitable linear combination of its components, we have
\begin{equation}\label{eq:expans_intro}
\Ucal(r\cos \vartheta,r\sin\vartheta)=A r^{\gamma} \cos\left(\frac{K}{2}\vartheta 
-\alpha \ln r\right)+ o (r^{\gamma})\qquad\text{as }r\to0,
\end{equation}
where
\[
\gamma=\frac{K}{2} + \frac{2\alpha^2}{K}\qquad \text{and}\qquad
0<A_0\le A(\bx)\le A_1.
\]
\item If $s\neq t\bar s$ for every $t>0$, then $\Scal_\rot$ contains no element with a unique singular point at $\mathbf{0}$.
\end{enumerate}
\end{theorem}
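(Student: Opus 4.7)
The plan is to reduce the free-boundary system \eqref{eqn Somega} to a linear elliptic equation for a suitable complex combination of the densities, then to characterize the compatibility conditions on the Dirichlet data which force a single singular point at $\mathbf{0}$, and finally to perform a blow-up analysis at the origin to identify the asymptotic profile in \eqref{eq:expans_intro}.

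First, following the strategy of \cite{MR4020313}, I would introduce complex weights $\zeta_i\in\C$ defined cyclically by $\zeta_{i+1}/\zeta_i = -a_{i+1,i}/a_{i,i+1}$ and set $\Ucal = \sum_i \zeta_i u_i$. The transmission conditions encoded in the pair of inequalities \eqref{esse_par} together with \eqref{cappuccio} imply that $\Ucal$ and its gradient match continuously across each interface $\partial\{u_i>0\}\cap\partial\{u_{i+1}>0\}$, so that $\Ucal$ satisfies the \emph{linear} equation
\begin{equation*}
  -\Delta \Ucal + \omega\,\bx^\perp\cdot\nabla \Ucal = \mu\, \Ucal
\end{equation*}
throughout $B$ (in fact across the whole free boundary). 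Going once around the $K$ sectors, the weights accumulate a factor $e^{\icomp\pi+2\pi\alpha}$, with $\alpha$ as in \eqref{eq:def_alfa}; this monodromy is what ultimately produces the logarithmic spiraling term $\alpha\ln r$ in the asymptotic expansion.

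Second, since $\mu<\pi^2$ and the advection $\omega\,\bx^\perp\cdot\nabla$ is divergence-free, standard Fredholm theory yields a unique $\Ucal\in H^1(B)$ with prescribed boundary trace $\sum_i \zeta_i s_i \varphi_i$. To recover a solution of \eqref{eqn Somega} one must invert the combination: on each sector the density $u_i$ is read off from the argument of $\Ucal$ relative to $\zeta_i$, and the positivity and segregation constraints require the nodal set $\{\Ucal=0\}$ to consist of smooth arcs joining $\partial B$ to a single common point. This is a \emph{topological} winding condition on the boundary trace, linear in $s$, which fixes a one-dimensional cone of admissible scalings and hence determines $\bar s$ uniquely up to a positive multiple $t$. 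The vector $\bar s$ is independent of $\mu$ and $\omega$ because these parameters enter only the interior linear problem and not the combinatorial assignment of the weights $\zeta_i$ or the winding condition on $\partial B$. Part (2) of the theorem is the contrapositive: for $s$ outside the ray through $\bar s$, any solution of \eqref{eqn Somega} would have $\Ucal$ with a different boundary winding, incompatible with a single interior zero.

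Third, for the expansion \eqref{eq:expans_intro} I would perform a blow-up analysis at $\mathbf{0}$ via an Almgren-type monotonicity formula adapted to $-\Delta+\omega\,\bx^\perp\cdot\nabla-\mu$. Under the scaling $\Ucal_\lambda(\bx)=\lambda^{-\gamma}\Ucal(\lambda\bx)$, the zero-order and advection terms produce lower-order corrections to the frequency (as $\omega\,\bx^\perp\cdot\nabla$ gains a factor $\lambda^2$), so the blow-up limits solve the pure Laplace equation $-\Delta V=0$ in $\R^2$ together with the same interface conditions. The classification of such homogeneous tangent profiles carried out in \cite{MR4020313} then forces the limit to be of the form $r^\gamma\cos(\tfrac{K}{2}\vartheta-\alpha\ln r)$ with $\gamma=K/2+2\alpha^2/K$, which is the lowest Almgren frequency compatible with a single singular point of multiplicity $K$. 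The uniform amplitude bounds $0<A_0\le A(\bx)\le A_1$ follow from non-degeneracy and Liouville-type arguments at the free boundary, and uniqueness of the singular solution reduces to the uniqueness of $\Ucal$ in the linear Dirichlet problem together with the injectivity of the reconstruction procedure restricted to configurations with a unique interior zero.

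The main obstacle will be the third step: one must set up an Almgren-type monotonicity formula for the full operator $-\Delta+\omega\,\bx^\perp\cdot\nabla-\mu$ (with the advection and zero-order perturbations handled as in the literature on Aronszajn/Garofalo--Lin type monotonicity) and show that the frequency converges at the origin precisely to $\gamma$, excluding higher modes by exploiting the single-singular-point constraint. A secondary but delicate point is verifying that $\bar s$ is independent of $\mu$ and $\omega$: this should follow from a continuation argument, since the set of admissible $s$ is a closed ray when $(\mu,\omega)=(0,0)$ by \cite{MR4020313}, and the topological winding condition deforms continuously with $(\mu,\omega)$.
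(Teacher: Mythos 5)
Your overall strategy (reduce to a linear problem via a weighted combination of the densities, isolate a codimension-$(K-1)$ compatibility condition on the traces, then blow up at the origin) parallels the paper's, but two steps break down as written, and a third is a genuinely different and unverified route. First, the combination $\Ucal=\sum_i\zeta_i u_i$ with $\zeta_{i+1}/\zeta_i=-a_{i+1,i}/a_{i,i+1}$ is not well-defined on $B$ unless the cyclic product $\prod_i(-a_{i+1,i}/a_{i,i+1})=(-1)^K e^{-2\pi\alpha}$ equals $1$: generically one transmission relation must fail, so $\Ucal$ solves the equation only on $B$ minus a curve emanating from the origin, not ``throughout $B$''. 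Consequently, ``standard Fredholm theory'' in $B$ with trace $\sum_i\zeta_i s_i\varphi_i$ does not directly apply; the paper circumvents this by lifting to the universal covering of the punctured disk and solving a genuinely linear problem \eqref{eq:linear_in_the_halfplane} there, from which the monodromy factor $e^{2\pi\alpha}$ appears as a periodicity twist rather than a defect.

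Second, and more seriously, your argument does not actually establish the heart of the theorem: the \emph{existence} of a vector $\bar s$ with $\bar s_i>0$. You correctly identify that the single-singular-point requirement imposes a codimension-$(K-1)$ linear condition on $s$, which cuts down to a one-dimensional subspace, but a dimension count does not say the resulting line meets the open positive orthant. The paper's Section~\ref{sec:proofmain} shows this via an explicit Vandermonde-type computation: the compatibility system \eqref{eqn lin sys} is solved by Cramer's rule, and each cofactor is shown to be a nonzero multiple of an integral of a strictly positive integrand (a product of sines over the ordered supports of the $\varphi_m$), which yields the required alternating signs and hence componentwise positivity of the induced $\bar s$. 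Without some replacement for this step, your proposal proves neither existence nor the claimed uniqueness of the ray. As a side effect, the independence of $\bar s$ from $(\mu,\omega)$ is then immediate in the paper (the compatibility conditions $\phi_k=0$, $|k|<n$, involve only $\alpha$ and the $\varphi_i$), whereas your continuation argument would require additional work to rule out the admissible ray drifting with the parameters. Your third step, an Almgren-type monotonicity formula for $-\Delta+\omega\bx^\perp\cdot\nabla-\mu$, is a plausible alternative to the paper's explicit Bessel/Fourier expansion for proving \eqref{eq:expans_intro} (the rescaled advection and reaction do vanish like $\lambda^2$), but it is a nontrivial program in its own right and is not carried out here.
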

\begin{corollary}\label{coro:intro}
Under the assumptions of the above theorem, if the problem is invariant under a rotation of 
$2\pi/K$, i.e.\ 
\begin{equation}\label{eq:tutteuguali}
\varphi_{i+1}(\bx) = \varphi_1(\Rcal_{2\pi i/K}\bx)
\qquad\text{and}\qquad
\frac{a_{i(i+1)}}{a_{(i+1)i}}=\frac{a_{K1}}{a_{1K}},
\end{equation}
for every $i$, then 
\[
\bar s = (1,1,\dots,1).
\]
\end{corollary}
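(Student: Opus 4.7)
The plan is to exploit the $\Z_K$-rotational symmetry of the problem encoded in \eqref{eq:tutteuguali}, together with the uniqueness statement in Theorem~\ref{thm:main_intro}. The key device is the transformation that rotates the plane by $2\pi/K$ and cyclically shifts the species indices (read modulo $K$):
\[
(TU)_i(\bx) := u_{i-1}\bigl(\Rcal_{2\pi/K}\bx\bigr),\qquad i=1,\dots,K.
\]
I would first show that $T$ sends solutions of \eqref{eqn Somega} with a single singular point at $\mathbf{0}$ to solutions of the same kind. The operator $-\Delta+\omega\bx^\perp\cdot\nabla-\mu$ is rotation-invariant, so the pointwise inequalities for each $u_i$ transfer automatically. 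The delicate point is the inequality for $\widehat u_i$: formally $\widehat u_i$ couples all coefficients $a_{ij}/a_{ji}$, whereas \eqref{eq:tutteuguali} only pins down consecutive ones. The resolution rests on the geometric fact, established en route to Theorem~\ref{thm:main_intro}, that in a solution with a unique singular point at $\mathbf{0}$ the regular part of the free boundary consists exclusively of interfaces between consecutive species; hence the inequality for $\widehat u_i$ reduces to transmission conditions along those consecutive interfaces, each depending solely on the single ratio $a_{j(j+1)}/a_{(j+1)j}$, and all these ratios coincide by \eqref{eq:tutteuguali}.

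Next, I would track the boundary data. Using \eqref{eq:tutteuguali},
\[
(TU)_i(\bx)=\bar s_{i-1}\varphi_{i-1}(\Rcal_{2\pi/K}\bx)=\bar s_{i-1}\varphi_i(\bx)\quad\text{on }\partial B,
\]
so that if $U$ is the element of $\Scal_\rot$ provided by Theorem~\ref{thm:main_intro}(1) for $s=\bar s$, then $TU\in\Scal_\rot$ with scaling $s'=(\bar s_K,\bar s_1,\dots,\bar s_{K-1})$ and still possesses a unique singular point at the origin. Part~(2) of Theorem~\ref{thm:main_intro} forces $s'=t\,\bar s$ for some $t>0$, i.e.\ $\bar s_{i-1}=t\,\bar s_i$ for every $i$; iterating along the cycle gives $t^K=1$, hence $t=1$, and all components of $\bar s$ coincide. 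Since Theorem~\ref{thm:main_intro} determines $\bar s$ only up to a positive scalar multiple, one may normalize to $\bar s=(1,\dots,1)$, as claimed.

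The main obstacle is precisely the invariance under $T$ of the inequalities for $\widehat u_i$: although \eqref{eq:tutteuguali} constrains only consecutive ratios, the $\widehat u_i$ formally involve all pairs $(i,j)$. Its resolution requires the structural information about the free boundary (uniqueness of the singular point, counterclockwise ordering of the arcs $\supp\varphi_i$, and the resulting adjacency only between consecutive species) developed in the course of proving Theorem~\ref{thm:main_intro}. Once this is in place, the remainder is an essentially algebraic consequence of the cyclic symmetry combined with the uniqueness of the distinguished ray in $\Scal_\rot$.
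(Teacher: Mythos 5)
Your argument is correct and follows the same idea as the paper's (extremely terse) proof: exploit the cyclic symmetry via the rotate-and-shift operator $T$, together with the uniqueness of the distinguished direction $\bar s$ up to positive scaling, to force $\bar s_{m}=\bar s_{m-1}$ for all $m$. You are in fact more careful than the paper, which simply asserts that ``a rotation of $2\pi/K$ leaves the data unchanged'' without addressing the point you rightly flag: condition \eqref{eq:tutteuguali} only pins down the consecutive ratios $a_{i(i+1)}/a_{(i+1)i}$, while $\widehat u_i$ formally involves all pairs $(i,j)$. Your observation — that for an element of $\Scal_\rot$ with a unique singular point at $\mathbf{0}$ only consecutive species share an interface away from the origin, so the variational inequality for $\widehat u_i$ uses only the consecutive ratio near $\partial\omega_{i-1}\cap\partial\omega_i$ (resp.\ $\partial\omega_i\cap\partial\omega_{i+1}$) and only the \emph{positivity} of the remaining ratios elsewhere (as in the verification of \eqref{eq:sup} in Proposition~\ref{prop:back2disk}) — is precisely what makes the invariance under $T$ rigorous, and it is the right way to close the gap the paper leaves implicit.
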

\begin{remark}
Notice that the asymptotic expansion \eqref{eq:expans_intro} implies that the free boundary, 
near the singular point $\mathbf{0}$, is the union of $K$ equi-distributed logarithmic spirals, as long as $\alpha\neq0$. On the other hand, in case $\alpha=0$, we obtain that the 
interfaces enter the origin with a definite angle. In particular, this holds true 
in the symmetric case $a_{ij}=a_{ji}$ for every $j\neq i$.
\end{remark}
\begin{remark}
A natural question concerns the dynamical stability of the solutions above. From this point of view, the study of the linearized problem of \eqref{eqn sys beta parab}, due to the presence of the large parameter $\beta$, does not seem a viable path. This leaves open the problem of stability, for the moment, although numerical simulations for \eqref{eqn sys beta parab}, 
with logistic reactions and $\beta$ large, suggest stability for some specific angular velocity $\omega$.
\end{remark}

We shall adopt a constructive point of view, building the solution by superposition of 
fundamental elementary modes. The dependence 
of such building blocks on the parameter $\omega$ and $\mu$ shows the presence of  resonances at exceptional values (see Section \ref{sec:special} for further details). 
As a byproduct of the analysis of resonances, we will prove the following results.

\begin{theorem}[Homogeneous boundary conditions]\label{prop:intr_res}
Let $K\ge 3$ and $a_{ij}>0$. If $(\mu,\omega)$ 
belongs to a suitable discrete set then there exists a nontrivial element of $\Scal_\rot$ with 
null traces. Analogous results hold for homogenous Neumann or Robin boundary conditions.
\end{theorem}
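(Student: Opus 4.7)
The plan is to realize the desired solutions as nontrivial elements of the kernel of an associated linear problem. The constructive strategy behind Theorem \ref{thm:main_intro} shows that, whenever $U = (u_1, \ldots, u_K)$ has a unique singular point at the origin, the distinguished linear combination $\Ucal$ appearing in \eqref{eq:expans_intro} extends across every interface and solves the linear elliptic equation
\[
-\Delta \Ucal + \omega\,\partial_\vartheta \Ucal = \mu\,\Ucal \quad \text{in } B.
\]
If the traces $\varphi_i$ all vanish, then $\Ucal|_{\partial B} \equiv 0$; conversely, any nontrivial $\Ucal$ satisfying this homogeneous Dirichlet problem and endowed with the correct $K$-fold nodal structure yields a nontrivial element of $\Scal_\rot$ via the same reconstruction used for the inhomogeneous problem. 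The task therefore reduces to locating the pairs $(\mu,\omega)\in\R^2$ for which such a $\Ucal$ exists.

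I would attack this via separation of variables in polar coordinates, using the complex ansatz consistent with \eqref{eq:expans_intro}, which leads to a Bessel-type equation
\[
R'' + \tfrac1r R' + \Bigl(\kappa^2 - \tfrac{\nu^2}{r^2}\Bigr) R = 0
\]
for the radial part, where $\nu$ is determined by $K$ and $\alpha$ and $\kappa^2 \in \C$ depends affinely on $(\mu, \omega)$. The unique (up to scaling) radial solution bounded at the origin is the Bessel function $J_\nu(\kappa r)$, and the Dirichlet boundary condition at $r=1$ then becomes the scalar complex equation $J_\nu(\kappa(\mu,\omega)) = 0$. Since $J_\nu$ is entire and nontrivial, this cuts out a countable, locally discrete subset of $\R^2$, namely the resonant set. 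The Neumann and Robin cases are handled identically, replacing $R(1)=0$ by $R'(1)=0$ or $R'(1) + h R(1) = 0$.

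For each resonant $(\mu,\omega)$ one recovers a nontrivial $\Ucal$ with the prescribed nodal architecture, from which the $K$-tuple $U$ is reassembled in the same manner as in Theorem \ref{thm:main_intro}: the $K$ sectors delimited by the nodal arcs of $\Ucal$ become the supports of the $u_i$'s, rescaled by the coefficients $a_{ij}/a_{ji}$. The principal obstacle lies precisely in this reconstruction step: one must verify that the resulting components are non-negative, that they jointly satisfy both sides of the differential inequalities \eqref{eqn Somega}, and that the nodal configuration of $\Ucal$ organizes itself as exactly $K$ smooth arcs converging to $\mathbf{0}$. These checks are inherited from the analysis underpinning Theorem \ref{thm:main_intro}, since the local structure near the singular point is dictated by the same asymptotic behavior \eqref{eq:expans_intro}.
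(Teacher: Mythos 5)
Your overall route---separation of variables, a Bessel-type radial equation, and a resonance condition $J_\nu(\kappa(\mu,\omega))=0$ cutting out a discrete set---is essentially the same as the paper's (which works with $\modB_\nu(\lambda)=0$, related to $J_\nu$ by $I_\nu(z)=(z/2)^\nu\modB_\nu(z^2)$, in the half-plane covering of the punctured disk). However, there is a genuine gap in the last step, and one smaller inaccuracy worth noting.

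The inaccuracy: when $\alpha\neq 0$, the function $\Ucal$ does \emph{not} solve a linear elliptic equation in all of $B$. It satisfies the equation on $B\setminus\Gamma$ for one interface $\Gamma$, and its gradient jumps by the factor $-e^{2\pi\alpha}$ across $\Gamma$; equivalently, $\Ucal$ lifts to a single-valued solution $v$ on the universal cover $\R\times(0,\infty)$ with the twisted periodicity $v(x+2\pi,y)=e^{2\pi\alpha}v(x,y)$. Your polar-coordinate ansatz with complex order $\nu=k-i\alpha$ silently encodes this multivaluedness, but the presentation of the reduction would mislead a reader into thinking $\Ucal$ is a classical solution in the disk.

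The genuine gap: you cannot take \emph{every} zero $\kappa$ of $J_\nu$. The reconstruction via Proposition~\ref{prop:back2disk} requires the nodal set of $v$ to consist of $K$ disjoint curves running from the boundary to $y=+\infty$ (property b)), and this fails if the radial factor $J_\nu(\kappa r)$ has interior zeros for $0<r<1$: each such zero would produce a circular nodal line, splitting the sectors into annular pieces and destroying the required topology. The paper's Lemma~\ref{lem:bessel_HD} restricts to $\lambda$ with $\modB_{k-i\alpha}(t\lambda)\neq0$ for all $t\in[0,1)$ (e.g.\ a zero of least modulus, which always exists since $\modB_{k-i\alpha}$ is entire, nonconstant, nonzero at $0$). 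Without this restriction your argument does not produce an element of $\Scal_\rot$. Moreover, your claim that the nodal-structure checks are ``inherited from the analysis underpinning Theorem~\ref{thm:main_intro}'' is incorrect: Proposition~\ref{prop:nod} relies crucially on the coercivity hypothesis $\mu<\pi^2$, which is violated exactly in the resonant regime (indeed for $\alpha\neq0$ one needs $\mu\geq(j_{0,1}+\sqrt{k^2+\alpha^2})^2>\pi^2$). The homogeneous case must therefore argue directly from the explicit single-mode factorization, not by citing the Steiner-symmetrization analysis used for the inhomogeneous problem.
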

\begin{theorem}[Entire solutions]\label{prop:intr_entire}
Let $K\ge 3$ and $a_{ij}>0$. For almost every $(\mu,\omega)$ there exists an entire solution of 
\eqref{eqn Somega} in $\R^2$.
\end{theorem}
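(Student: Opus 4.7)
The strategy mirrors the constructive approach used to prove Theorem~\ref{thm:main_intro}: build an explicit combined function $\Ucal$ solving the linear PDE $-\Delta \Ucal + \omega \bx^\perp \cdot \nabla \Ucal = \mu \Ucal$ on the whole plane (or, when $K$ is odd, on the natural double cover of $\R^2 \setminus \{\mathbf{0}\}$), with nodal set equal to exactly $K$ logarithmic spirals of twist $\alpha$ meeting at the origin. The segregated components $u_i$ are then recovered as $|\Ucal|$ on the $i$-th nodal sector, with signs arranged so that the hat-combinations $\widehat u_i$ in~\eqref{cappuccio} reassemble to smooth global solutions of the linear equation, exactly as in the analysis on the ball. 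The inequalities of \eqref{eqn Somega} then follow distributionally from the one-sided sign of the jump of $\nabla \Ucal$ across each interface.

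To construct $\Ucal$, I would decompose the linear PDE into angular Fourier modes, exploiting that in polar coordinates $\bx^\perp \cdot \nabla = \partial_\vartheta$. For a mode $R(r) e^{ik\vartheta}$ the equation reduces to the complex-parameter Bessel ODE
\[
r^2 R''(r) + r R'(r) + \bigl((\mu - i\omega k) r^2 - k^2\bigr) R(r) = 0.
\]
The fundamental mode encoding $K$ spiral arms of twist $\alpha$ corresponds to taking $k = K/2$ together with the specific Frobenius solution whose local behaviour at $r=0$ reproduces the expansion $r^\gamma \cos(K\vartheta/2 - \alpha \ln r)$ of Theorem~\ref{thm:main_intro}. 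The crucial point is that the coefficients of this ODE are entire in $r$, so the Frobenius series converges on all of $[0,\infty)$ and $\Ucal$ is globally defined; no resummation or separate far-field analysis is needed.

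Once $\Ucal$ is in hand, it remains to check that its nodal set inside $\R^2$ is precisely the expected union of $K$ logarithmic spirals, with no additional closed curve of zeros that would spoil the segregation picture. This is where the exceptional set of $(\mu,\omega)$ enters: for resonant pairs, the radial profile $R(r)$ may develop a further zero on $(0,\infty)$, producing spurious interfaces and destroying the sign pattern on which the reconstruction of the $u_i$ depends. Since the condition that $R$ vanishes somewhere on $(0,\infty)$ can be recast as the vanishing of a real-analytic function of $(\mu,\omega)$ (via the zeros of the associated complex Bessel function), the bad set is at worst a countable union of analytic curves, hence of Lebesgue measure zero. The main obstacle I foresee is precisely this last step: rigorously ruling out additional zeros for a.e.\ $(\mu,\omega)$ and connecting the resonance analysis of Section~\ref{sec:special} to a non-resonant global construction, especially in the asymmetric regime $\alpha \neq 0$, where the complex nature of the Bessel profile makes classical real-variable zero-counting arguments less immediate and one likely needs the large-$r$ asymptotics of $R$ together with a perturbation argument off the discrete resonant set.
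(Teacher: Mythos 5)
Your strategy is structurally the same as the paper's (angular mode decomposition leading to a Bessel-type ODE, plus a resonance set of measure zero), but as written it only covers the symmetric case $\alpha=0$, and that is precisely where the paper has to work harder. There are two related problems.

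First, when $\alpha\neq 0$ the combined function $\Ucal$ is genuinely multivalued on $\R^2\setminus\{\mathbf{0}\}$: by the very definition \eqref{eq:def_v}, going once around the origin multiplies $\Ucal$ by $\sigma=(-1)^K e^{2\pi\alpha}$. This is \emph{not} the same as the double cover you invoke for odd $K$; it is an additional twist coming from the asymmetry of the $a_{ij}$. A Fourier expansion into single-valued modes $R(r)e^{ik\vartheta}$ with $k\in\Z$ cannot represent such an object. You need quasi-periodic modes $e^{(\alpha+ik)\vartheta}$, i.e.\ effective angular frequency $\nu=k-i\alpha\in\C$, which is exactly what the paper's half-plane lift $\Tcal$ builds in automatically. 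Your displayed ODE
\[
r^2 R''+rR'+\bigl((\mu-i\omega k)r^2-k^2\bigr)R=0
\]
confirms the gap: the correct equation (obtained from \eqref{eq.w} via $r=e^{-y}$) reads
\[
r^2R''+rR'+\bigl((\mu-\omega\alpha-i\omega k)\,r^2-(k-i\alpha)^2\bigr)R=0,
\]
with a complex index $\nu=k-i\alpha$ and a $-\omega\alpha$ shift in the coefficient of $r^2$. Without these, you are not solving the right radial problem and the local expansion $r^\gamma\cos(K\vartheta/2-\alpha\ln r)$ you want near the origin cannot emerge. The ``obstacle'' you flag in the asymmetric regime is therefore not an inconvenience at the end of the argument but a breakdown at the start: the mode decomposition itself must be redone on the universal cover.

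Second, the characterization of the exceptional set should be phrased more carefully. In the paper (Lemma \ref{lem:bessel_E}) the single constraint on $\lambda=\omega\alpha-\mu+i\omega k$ is that $\modB_{k-i\alpha}(t\lambda)\neq 0$ for all $t>0$, i.e.\ the \emph{ray} $\{t\lambda:t>0\}$ must avoid the discrete zero set of the entire function $\modB_{k-i\alpha}$. The bad set of $\lambda$ is therefore a countable union of rays through the origin in $\C$; pulled back to $(\mu,\omega)$ via the affine map $\lambda=\omega\alpha-\mu+i\omega k$, it is a countable union of lines, hence of Lebesgue measure zero. This is cleaner and more precise than ``$R$ develops a further zero produces a measure-zero analytic set'' — and it also makes clear why \emph{almost every} $(\mu,\omega)$ works rather than leaving the genericity to an abstract analytic-variety argument. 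Finally, note that the paper does not stop at the construction of $v$: it still has to verify hypotheses b) and c) of Section \ref{sec:disk2halfplane} (via Lemma \ref{lem:asy_entire}) to pass through Proposition \ref{prop:back2disk} and land in $\Scal_\rot$; this descent step is where one actually produces the segregated $K$-tuple, and your sketch does not make it explicit.
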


In the above results, the conditions on $(\mu,\omega)$ are explicit in terms of the zero set of 
suitable analytic functions in the complex plane. Indeed, in both cases, the solutions are 
explicit in terms of trigonometric and Bessel's functions. This allows us to study the  
structure of the free boundary of the entire solutions far away from the origin. It turns out 
that, at least when $\omega\neq0$, also at infinity the free boundary consists in 
equi-distributed spirals, now of arithmetic type. We refer to Lemma \ref{lem:asy_entire} and 
Remark \ref{rem:final} ahead for further details.

\begin{remark}\label{rem_entire_intro}
In the particular case $\alpha=\mu=0$, we obtain that the entire solution found in Theorem 
\ref{prop:intr_entire} is related to the nodal components of a smooth rotating solution of 
the pure heat equation. Let $\omega>0$, $k\ge1$ be an integer and let $I_k$ denote the modified Bessel 
function of the first kind, with parameter $k$. We have that the function
\[
	U(r e^{i\vartheta}, t) = \real \left[e^{ik (\vartheta+\omega t)} I_{k}\left(\frac{\sqrt{2\omega k}}{2} (1+i) r \right)\right]
\]
is an entire, eternal rotating solution of the heat equation
\[
U_t-\Delta U = 0\qquad \text{in }\R^2\times\R
\]
having $2k$ nodal regions, which coincide up to rotations multiple of $\pi/k$. The 
equi-distributed nodal lines admit a straight tangent as $r\to0$, while they behave 
like arithmetic spirals of equation $\vartheta=\sqrt{\frac{\omega}{2k}} r$ as $r\to+\infty$, see Fig.~\ref{fig:caloric}. Notice that, as $\omega\to0$, a suitable renormalization of $U$ 
converges to the entire harmonic function $\real z^k$.
\end{remark}
\begin{figure}[t]
	\begin{center}
		\includegraphics[width=.35\textwidth]
		{./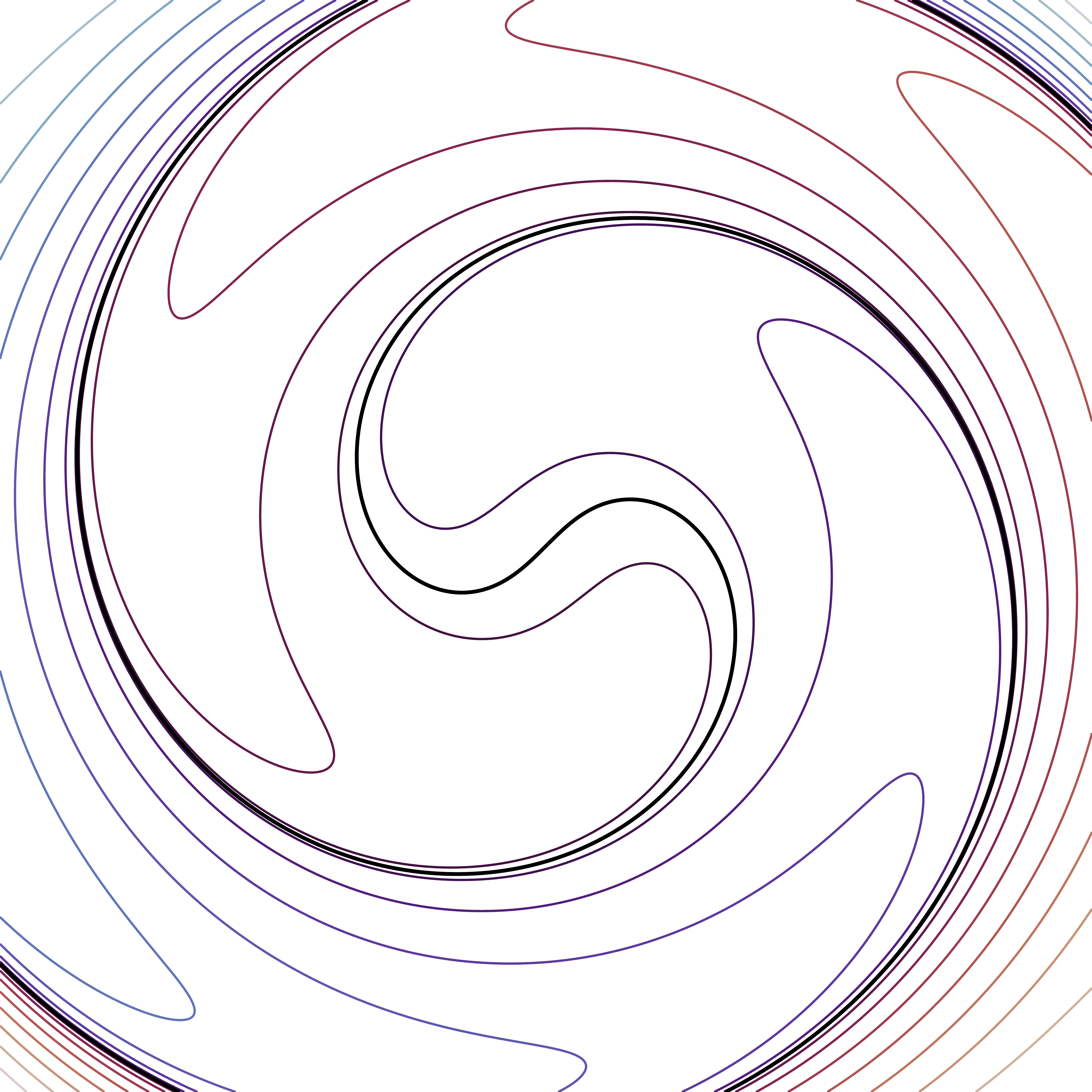} \hspace{2cm}
		\includegraphics[width=.35\textwidth]
		{./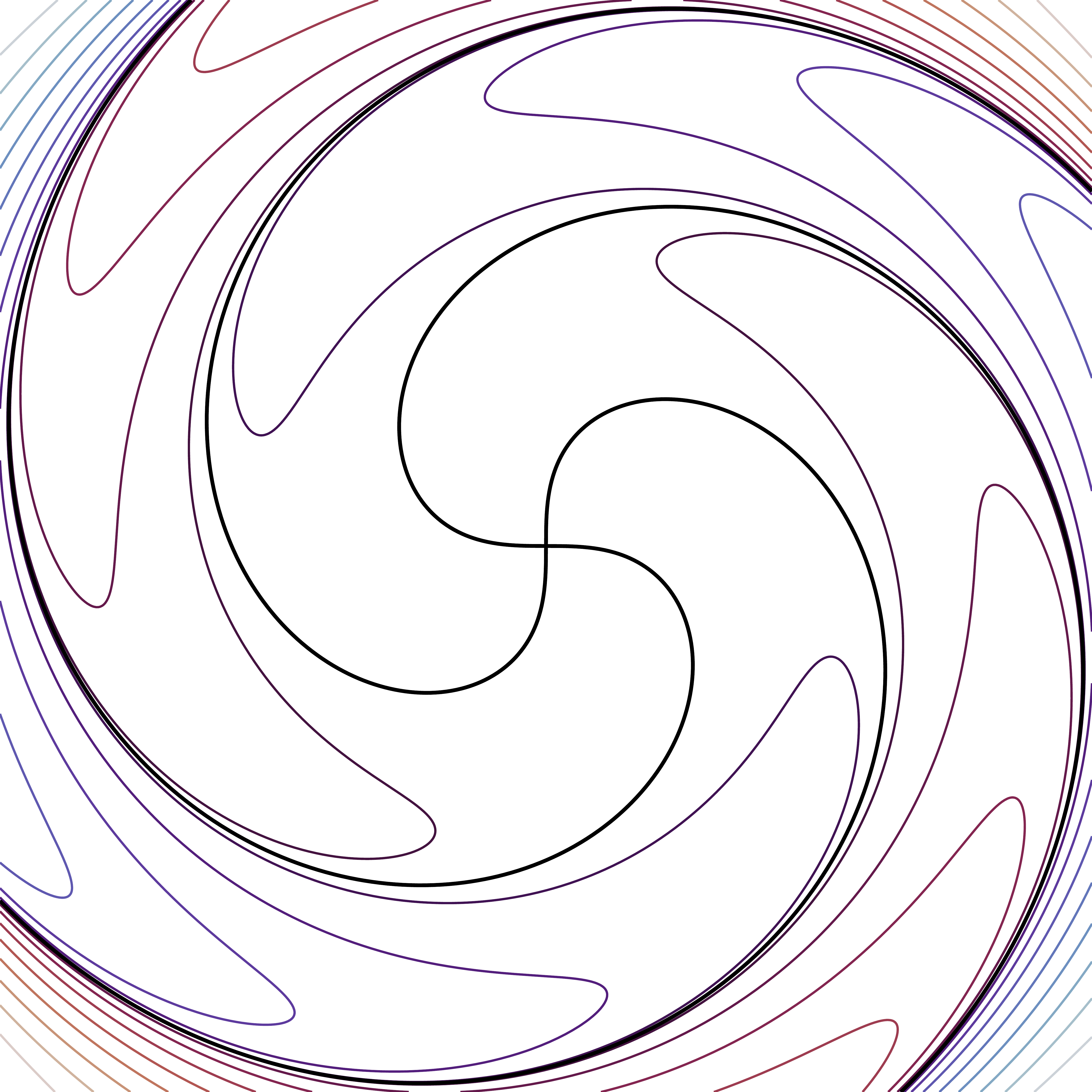}
		\caption{Contour lines of the rotating caloric functions in Remark \ref{rem_entire_intro}. Here $\omega = 1$,  $k =1$ and $k=2$, respectively. In black the nodal lines: the appearance of arithmetic spirals for $r$ large is rather clear in the picture.}
		\label{fig:caloric}
	\end{center}
\end{figure}
\begin{remark}
Notice that, by separation of variable, one may treat boundary value problems for rotating 
solutions also on other rotationally invariant domains $\Omega$, such as annuli or external 
domains. Of course, since in these cases $\mathbf{0}\not\in \Omega$, this cannot provide spiraling solutions, at least in our sense.
\end{remark}

Let us provide an explanation for our contruction. When a smooth curve 
separates two densities of an element of $\Scal_\rot$, at least locally, the 
gradients of the two densities are proportional across such interface. Indeed, by definition 
of $\hat u_i$, the function $a_{21}u_1 - a_{12}u_2$ solves an elliptic equation 
in a neighborhood of the interface.

Let us assume for 
concreteness $K=3$. In case the nodal structure of 
$(u_1,u_2,u_3)\in\Scal_\rot$ is the required one, as depicted in Fig.~\ref{fig:intro}, then a 
suitable linear combination of the components $u_i$ satisfies an equation on $B$, up to a curve. 
More precisely, let us define
\[
\Ucal = u_1 -\frac{a_{12}}{a_{21}}u_2 + \frac{a_{12}}{a_{21}}\cdot\frac{a_{23}}{a_{32}}u_3,
\qquad
\Gamma=\overline{\{u_1>0\}}\cap\overline{\{u_3>0\}}.
\] 
It is easy to check that
\[
-\Delta \Ucal + \omega \bx^\perp \cdot \nabla \Ucal = \mu \Ucal\text{ in }B\setminus\Gamma,
\]
while, if $\mathbf{0}\neq\bx_0\in\Gamma$ and $\alpha$ is defined as in \eqref{eq:def_alfa},
\[
\lim_{\substack{\bx\to\bx_0\\u_3(\bx)>0}} \nabla \Ucal(\bx) = -e^{2\pi\alpha}
\lim_{\substack{\bx\to\bx_0\\u_1(\bx)>0}} \nabla \Ucal(\bx).
\]
By composing with a conformal map between $B\setminus\{\mathbf{0}\}$ and its universal covering 
$\R\times (0,\infty)$, we can lift $\Ucal$ to a solution of a linear equation in the half-plane (see 
\eqref{eq:eq_in_the_halfplane} below), having 
a precise nodal structure. This connection is analyzed in Section \ref{sec:disk2halfplane}.

To prove Theorem \ref{thm:main_intro} we reverse the above argument: we start by solving 
the equation in the covering by separation of variables, in Section \ref{sec:sepvar}; next, 
we show in Section \ref{sec:nodalset} that, under suitable 
conditions, the solution has the appropriate nodal properties to be mapped back to the disk. In both 
these points, we have to deal with non-resonance/coerciveness conditions, leading to the 
assumption on $\mu$. On the other hand, the existence of the vector $\bar s$ is equivalent to the validity of suitable compatibility conditions, expressed in terms of the Fourier coefficients of the boundary data. Specifically, 
when $K=3$, $\bar s$ is any componentwise positive solution of the system:
\begin{equation}\label{eq:compX3}
\int_0^{2\pi} e^{-\alpha \vartheta}\Phi(\vartheta)\sin\left(\frac{\vartheta}{2}\right)\,dx = 
\int_0^{2\pi} e^{-\alpha \vartheta}\Phi(\vartheta)\cos\left(\frac{\vartheta}{2}\right)\,dx = 0,
\end{equation}
where 
\[
\Phi = s_1\varphi_1 -s_2\frac{a_{12}}{a_{21}}\varphi_2 + s_3\frac{a_{12}}{a_{21}}\cdot\frac{a_{23}}{a_{32}}\varphi_3.
\] 
We analyze the general compatibility conditions in Section \ref{sec:proofmain}, 
concluding the proof of Theorem \ref{thm:main_intro}. Finally, Theorems \ref{prop:intr_res} 
and \ref{prop:intr_entire} are proved in Section \ref{sec:special}.

\section{An equivalent problem in the half-plane}\label{sec:disk2halfplane}

As we mentioned, the proof of Theorems \ref{thm:main_intro}, \ref{prop:intr_res} 
and \ref{prop:intr_entire} is based on the connection between system \eqref{eqn Somega} 
and an equation in the half-plane, seen as the universal covering of the punctured disk. In
this section we analyze such connection.

Let $\mu,\omega$ be real parameters, and $v = v(x,y) \in 
C(\R\times[0,+\infty))$ be a classical solution of the equation
\begin{equation}\label{eq:eq_in_the_halfplane}
		-\Delta v + \omega e^{-2y} v_x = e^{-2y}\mu v \qquad x\in\R,\,y>0.
\end{equation}
In the following we assume that $v$ satisfies the following properties:
\begin{itemize}
\item[a)] there exists $\sigma\neq0$ such that 
\begin{equation}\label{eq:cond_a)}
v(x + 2\pi, y) = \sigma v(x,y),
\end{equation} 
for any  $x\in\R$, $y\ge 0$;
\item[b)] $v(x,y)= 0$ iff $(x,y)\in \overline{\cc}_i
\cap\overline{\cc}_{i+1} $ for some $i\in\Z$, where 
the non-empty nodal regions $\cc_i$ are open, connected, disjoint, unbounded and
	\[
    \begin{split}
    &\overline{\cc}_i\cap\{(x,0):x\in\R\}=\{(x,0):x_{i-1}\le x\le x_i\}\\
	&\overline{\cc}_i\cap\overline{\cc}_j\neq\emptyset \qquad\iff\qquad j-i=-1,0,1.
    \end{split}
	\]
	In particular, since $v$ is analytic for $y>0$, we obtain that 
	the set $\overline{\cc}_i\cap\overline{\cc}_{i+1} $ is actually a 
	locally analytic curve which accumulates both at $(x_i,0)$ and at 
	$y=\infty$;
\item[c)] $\left.v\right|_{\cc_i} \in H^1(\cc_i)$, for every $i\in \Z$ (or, equivalently, their trivial extensions belong to $H^1(\R\times(0,+\infty))$). 
\end{itemize}
We infer that $\bigcup_i\overline{\cc}_i = 
\R\times[0,+\infty)$, and that this covering is locally finite. Moreover, 
by a) the nodal set of $v$ is $2\pi$-periodic in the $x$-direction. Up to a 
translation, we can assume that $x_0=0$, 
so that in particular $v(0,0)=0$ and the number $K$ of nodal components, up to periodicity, can be defined as
\begin{equation}\label{eq:defk}
K=\#\{i: [x_{i-1},x_i]\subset[0,2\pi]\}\qquad \text{i.e.\ } S_{i+K} = S_{i}+(2\pi,0),
\forall i.
\end{equation}
Notice that $\sigma>0$ implies $K$ even, while $\sigma<0$ forces $K$ odd.

Finally, we introduce the following conformal map between the half-plane 
and the punctured disk:
\begin{equation}\label{eq:defT}
\mathcal{T} \from
\R \times (0,+\infty) \to  B\setminus\{\mathbf{0}\},\qquad
 \mathcal{T}\from (x,y) \mapsto \mathbf x=(e^{-y}\cos x,e^{-y}\sin x)
\end{equation}
(for more details about this map, see Remarks 2.17 and 2.19 in \cite{MR4020313}).

The main result of this section is the following.
\begin{proposition}\label{prop:back2disk}
Let $v\in  C(\R\times[0,+\infty))$ be a classical solution of
\eqref{eq:eq_in_the_halfplane}, satisfying a), b) and c), and let $K$
be defined as in \eqref{eq:defk}. Assume that the positive 
coefficients $a_{ij}$ and the parameter $\alpha$ satisfy
\begin{equation}\label{eq:rel_sigma_aij}
\prod_{i=1}^K \frac{a_{(i-1)i}}{a_{i(i-1)}}  = (-1)^K\sigma
\end{equation}
(understanding $a_{01}=a_{K1}$, $a_{10}=a_{1K}$).

For $i=1,\dots,K$ let us define
\begin{equation}\label{eq:def_li}
u_i = (-1)^{i+1} l_i \left.v\right|_{S_i} \circ \Tcal , \qquad
\text{with } l_1=1,\ l_{i} = \frac{a_{i(i-1)}}{a_{(i-1)i}}\cdot l_{i-1}
\end{equation}
(trivially extended in the whole $B$). Then $(u_1,\dots, u_K) \in \Scal_\rot$. Moreover, with respect to this $K$-tuple, the origin is the only point with higher multiplicity, with $m(\mathbf{0})=K$. 

Vice versa,  if $(u_1,\dots, u_K) \in \Scal_\rot$ has the origin as only singular point, 
then there exists $v$ such that the first part of the proposition holds.
\end{proposition}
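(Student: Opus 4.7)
The bridge between the two formulations is the conformal pullback under $\Tcal$. A direct computation, using $\bx = e^{-y}(\cos x,\sin x)$, gives $\bx^\perp\cdot\nabla_\bx = \partial_\theta = \partial_x$ and, by conformality, $\Delta_\bx = e^{2y}\Delta_{(x,y)}$, so that $U$ solves $-\Delta U + \omega\bx^\perp\cdot\nabla U = \mu U$ on an open subset of the punctured disk if and only if $v := U\circ\Tcal$ solves \eqref{eq:eq_in_the_halfplane} on the corresponding preimage in the half-plane. Once this pullback identity is recorded, both directions of the proposition reduce to a careful bookkeeping of signs and weights.

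\emph{Forward direction.} Given $v$ satisfying a)--c), I would define $u_i$ on $\Tcal(\cc_i)$ by $u_i(\Tcal(x,y)) = (-1)^{i+1} l_i\, v(x,y)$ and extend by zero. Well-definedness requires that $\Tcal|_{\cc_i}$ be injective, which holds because the $2\pi$-shift in $x$ sends $\cc_i$ onto the disjoint nodal domain $\cc_{i+K}$, and hence no two distinct points of $\cc_i$ can have the same image. Non-negativity $u_i \ge 0$ follows from $v$ having a definite, alternating sign on consecutive $\cc_i$, so that the factor $(-1)^{i+1}$ fixes the sign (after possibly replacing $v$ by $-v$). The $H^1$-regularity is inherited from c) via the conformal invariance of the Dirichlet energy, $\int_B |\nabla u_i|^2 = l_i^2 \int_{\cc_i} |\nabla v|^2$; orthogonality $u_i u_j \equiv 0$ is immediate from the disjointness of the $\cc_i$. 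For the inequality $-\Delta u_i + \omega\bx^\perp\cdot\nabla u_i \le \mu u_i$: the pullback equation holds classically on the interior of $\Tcal(\cc_i)$, while the distributional Laplacian of the trivially extended $u_i$ picks up a non-positive surface measure on $\partial\supp u_i$ coming from the outward-pointing gradient of $u_i \ge 0$, yielding the required $\le$. The analogous computation on each interface for $\hat u_i$, combined with the recursion $l_{j+1}/l_j = a_{(j+1)j}/a_{j(j+1)}$ built into \eqref{eq:def_li}, shows that the resulting singular part is non-negative on every interface, with exact cancellation on the two adjacent to $\cc_i$. The multiplicity statement follows from b): all $K$ nodal domains accumulate at the origin (as $y\to\infty$), while every other point of $B$ lies on at most one interface.

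\emph{Backward direction.} Conversely, starting from $(u_1,\dots,u_K) \in \Scal_\rot$ with the origin as the only singular point, I would construct $v$ by inverting the correspondence. The nodal set of the $(u_i)$ has preimage under $\Tcal$ equal to a $2\pi$-periodic family of curves cutting the half-plane into regions $\cc_j$, $j \in \Z$; on each $\cc_j$ with $j \equiv i \pmod K$ I define $v(x,y) := (-1)^{j+1} l_j^{-1} u_i(\Tcal(x,y))$, extending the recursion for $l_j$ to all integer indices. Smoothness of $v$ across each half-plane interface is the translation, through the signs and weights, of the transmission condition $a_{(j+1)j}\partial_\nu u_j + a_{j(j+1)}\partial_\nu u_{j+1} = 0$ on the corresponding disk interface, which follows from the paired inequalities for $u_i$ and $\hat u_i$ forcing the jump of $-\Delta \hat u_i$ to vanish. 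The quasi-periodicity $v(x+2\pi,y) = \sigma v(x,y)$ then reduces to $l_{j+K}/l_j = (-1)^K/\sigma$, which is exactly \eqref{eq:rel_sigma_aij}. Finally, $v$ solves \eqref{eq:eq_in_the_halfplane} classically in each $\cc_j$ by the pullback identity, and globally by the interface smoothness.

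\emph{Main obstacle.} The core technical point is the sign bookkeeping: verifying, in the forward direction, that the differential inequality for $\hat u_i$ holds on \emph{every} interface (not only on the two adjacent to $\cc_i$, where the cancellation is obvious), and in the backward direction that the monodromy picked up by traversing the $K$ interfaces in the disk reproduces exactly the prescribed multiplier $\sigma$. Both requirements pin down the specific form of the weights $l_i$ in \eqref{eq:def_li} and the compatibility relation \eqref{eq:rel_sigma_aij}; everything else follows by direct translation through $\Tcal$.
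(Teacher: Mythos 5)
Your proposal follows essentially the same route as the paper's proof: pull back the rotating-frame equation through the conformal map $\Tcal$, use the alternating sign/weight recursion for $l_i$ to get the strict equation inside each nodal zone and the exact cancellation of the gradient jump across the two interfaces of $\overline{\omega}_{i-1}\cup\overline{\omega}_i$ (eq.~\eqref{eq:sup_str}), and check the two differential inequalities interface by interface. The identification of $\partial_\theta$ with $\partial_x$, the injectivity of $\Tcal|_{\cc_i}$, the inheritance of $H^1$-regularity by conformal invariance, and the backward construction via lifting and the monodromy relation $l_{i+K}/l_i=(-1)^K/\sigma$ are all correct and match the intended argument.

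There is, however, one genuine gap. When you argue that the distributional Laplacian of the trivially extended $u_i$ ``picks up a non-positive surface measure on $\partial\supp u_i$,'' you are implicitly treating the free boundary $\partial\omega_i$ as a smooth curve across which integration by parts is available. That is true away from $\mathbf{0}$, but the origin is precisely the accumulation point where all $K$ interfaces meet (and, when $\alpha\neq 0$, they spiral into it), so $\partial\omega_i$ is not a regular hypersurface near $\mathbf{0}$ and one cannot integrate by parts over $\omega_i$ directly against an arbitrary test function $\varphi$. The paper handles this by first proving the inequalities for $\varphi$ vanishing on $B_\eps(\mathbf{0})$, and then passing to general $\varphi$ through a radial cutoff $\eta_\eps$: the extra term is controlled by
\[
\Bigl|\int_B (\nabla u_i\cdot\nabla\eta_\eps)\,\varphi\Bigr|\le C\,\|u_i\|_{H^1(B_{2\eps})}\,\|\varphi\|_{L^\infty},
\]
which tends to $0$ as $\eps\to 0$ because $u_i\in H^1(B)$. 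This step is indispensable and is absent from your proposal; indeed you flag ``sign bookkeeping'' as the main obstacle, whereas the delicate point is the removability of the origin. Adding this cutoff lemma would close the gap. On the other hand, your backward direction is actually more detailed than the paper's, which simply defines $v\circ\Tcal=\sum_i(-1)^{i+1}u_i/l_i$ and defers the lifting to the reference \cite{MR4020313}.
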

\begin{remark}\label{rem:back2traces}
In case that the asymptotic behavior of the nodal zones $S_i$ is 
known, for $y\to+\infty$, then by composition with $\Tcal$ one can 
deduce the local description of the free boundary associated to  
$(u_1,\dots , u_K)$ near $\mathbf{0}$.
\end{remark}
\begin{proof}
By condition a) the functions $u_i$ are well-defined, by b) they satisfy $u_i\cdot u_j \equiv0$ as long as $j\neq i$, and by c) 
they belong to $H^1(B)$ (recall that $\Tcal$ is a conformal map). 
With direct computations one can check that 
\begin{equation}\label{eq:sub_str}
-\Delta u_i + \omega\, \bx^\perp \cdot \nabla u_i = \mu u_i
\qquad \text{ in }\omega_i:=\{u_i>0\}.  
\end{equation}
Analogously, using the definition of the coefficients $l_i$ (see eq.
\eqref{eq:def_li}), we have that 
\begin{equation}\label{eq:sup_str}
-\Delta \left(u_{i-1} - \frac{a_{(i-1)i}}{a_{i(i-1)}}u_i\right)  + \omega\, \bx^\perp \cdot \nabla \left(u_{i-1} - \frac{a_{(i-1)i}}{a_{i(i-1)}}u_i\right) = \mu \left(u_{i-1} - \frac{a_{(i-1)i}}{a_{i(i-1)}}u_i\right)  
\end{equation}
in the interior of $\overline{\omega}_{i-1} \cup 
\overline{\omega}_{i}$, $i=1,\dots, K$ (in case $i=1$ we keep 
understanding $i-1=K$, and the validity of \eqref{eq:sup_str} 
follows by \eqref{eq:rel_sigma_aij}). Notice that, when restricted 
to $\overline{\omega}_{i-1} \cup 
\overline{\omega}_{i}$, the function 
in \eqref{eq:sup_str} is a multiple of both $\hat u_{i-1}$ and 
$\hat u_i$.

We have to
show the validity of the inequalities
\begin{align}
\int_{B}\nabla u_i\cdot \nabla \varphi + \left[\omega\, \bx^\perp \cdot \nabla u_i - \mu u_i\right]\varphi \le 0,\label{eq:sub}\\
\int_{B}\nabla \hat u_i \cdot\nabla \varphi + \left[\omega\, \bx^\perp \cdot \nabla\hat u_i - \mu\hat u_i\right]\varphi \ge 0,\label{eq:sup}
\end{align}
for every Lipschitz, compactly supported, non-negative $\varphi$. 

First, let us consider any $\varphi$ such that  
$\varphi\equiv0$ in $B_\eps(\mathbf{0})$. Then \eqref{eq:sub} follows by integration by parts, since
\[
\int_{B}\nabla u_i\cdot \nabla \varphi + \left[\omega\, \bx^\perp \cdot \nabla u_i - \mu u_i\right]\varphi  = 
\int_{\omega_i\setminus B_\eps}\nabla u_i\cdot \nabla \varphi + \left[\omega\, \bx^\perp \cdot \nabla u_i - \mu u_i\right]\varphi = 
\int_{\partial \omega_i} \partial_\nu u_i \varphi \le0,
\]
where we used the regularity of $\partial \omega_i$ away from 
$\mathbf{0}$, the equation for $u_i$ and the fact that 
$\partial_\nu u_i\le 0$ on $\partial\omega_i$. On the other hand, to prove  
\eqref{eq:sup}, since $\varphi\equiv0$ in $B_\eps(\mathbf{0})$ we can use a partition of unity argument and assume that $\supp(\varphi)$ intersects at most two 
adjacent nodal regions. In case none of them is $\omega_i$, then 
$\hat u_i = -c_1 u_j - c_2 u_{j+1}$, with $c_i>0$, and  
\eqref{eq:sup} follows by applying twice \eqref{eq:sub}, with $i=j,j+1$; if $\supp(\varphi) \subset \overline{\omega}_{i-1} \cup 
\overline{\omega}_{i} \setminus B_\eps$ then \eqref{eq:sup_str} 
yields
\[
\int_{B}\nabla \hat u_i\cdot \nabla \varphi + \left[\omega\, \bx^\perp \cdot \nabla \hat u_i - \mu u_i\right]\varphi  = 
\int_{\overline{\omega}_{i-1} \cap 
\overline{\omega}_{i} \setminus B_\eps}\nabla \hat u_i\cdot \nabla \varphi + \left[\omega\, \bx^\perp \cdot \nabla \hat u_i - \mu \hat u_i\right]\varphi = 0,
\]
and the same holds true if if $\supp(\varphi) \subset \overline{\omega}_{i} \cup 
\overline{\omega}_{i+1} \setminus B_\eps$. 

Finally, let us consider any $\varphi$. We show how to prove 
\eqref{eq:sub}, \eqref{eq:sup} is analogous. For any $\eps > 0$ small, 
we define the function
	\[
		\eta(\mathbf x)=\begin{cases}
			0 & \mathbf x \in B_\eps\\
			\frac{|\mathbf x|-\eps}{\eps} & \mathbf x \in B_{2\eps} 
			\setminus B_\eps\\
			1  & \mathbf x \in B \setminus B_{2\eps}.
		\end{cases}
	\]
	Then $\varphi \eta = 0$ in $B_\eps$ and by the previous part
	\[
		\int_B ( \nabla u_i \cdot \nabla \varphi) \eta + \int_B (\nabla u_i \cdot \nabla \eta ) \varphi  + \int_B \left[\omega\, \bx^\perp \cdot \nabla u_i - \mu u_i\right]\eta\varphi 
		\leq 0.
	\]
	Since $\varphi$ is Lipschitz, we have
	\[
		 \left|\int_B (\nabla u_i \cdot \nabla \eta ) \varphi \right| \leq \frac{1}{\eps} \int_{B_{2\eps}\setminus B_\eps}  \left|\nabla u_i \right| \varphi \leq \frac{1}{\eps} \| u_i \|_{H^1(B_{2\eps})} \|\varphi \|_{L^2(B_{2\eps})} \leq C  \| u_i \|_{H^1(B_{2\eps})} \|\varphi\|_{L^\infty}.
 	\]
 	Thus we find the estimate
 	\[
 		\int_B ( \nabla u_i \cdot \nabla \varphi) \eta + \int_B \left[\omega\, \bx^\perp \cdot \nabla u_i - \mu u_i\right]\eta\varphi \leq  C \| u_i \|_{H^1(B_{2\eps})} \|\varphi\|_{L^\infty}.
 	\]
 	Taking the limit for $\eps \to 0$, since $\eta$ converges monotonically to $1$, we conclude that
 	\[
 		\int_B \nabla u_i \cdot \nabla \varphi + \left[\omega\, \bx^\perp \cdot \nabla u_i - \mu u_i\right] \varphi \leq 0 ,
 	\]
concluding the proof of the first assertion.

The second part follows by defining
\begin{equation}\label{eq:def_v}
v\circ\Tcal = \sum_{i=1}^K  \frac{(-1)^{i+1}}{l_i} u_i,
\end{equation}
and then deriving $v$ by a lifting argument. We refer to \cite[Sect. 2]{MR4020313} for 
further details.
\end{proof}

\section{Solutions in the half-plane}\label{sec:sepvar}

Let $\mu,\alpha,\omega\in\R$. Given the trace
\[
\Phi\from[0,2\pi]\to\R,\qquad \Phi(0) = \Phi(2\pi) =0,
\]
we look for solutions $v$ of the problem in the half-plane:
\begin{equation}\label{eq:linear_in_the_halfplane}
	\begin{cases}
		-\Delta v + \omega e^{-2y} v_x = e^{-2y}\mu v & x\in\R,\,y>0\\
	    v(x + 2\pi, y) = e^{2\pi\alpha} v(x,y)              & x\in\R,\,y\ge 0\\
		v(x,0)=\Phi(x) & 0\le x \le 2\pi.
	\end{cases}
\end{equation}

Notice that we are considering equation \eqref{eq:eq_in_the_halfplane} 
together with condition \eqref{eq:cond_a)}, in the case $\sigma = 
e^{2\pi\alpha}>0$ (recall the definition \eqref{eq:def_alfa} and the relation \eqref{eq:rel_sigma_aij}). As we noticed, this entails an even number of nodal 
zones, in the period. One can easily modify our arguments to deal with 
an odd one, i.e.\ with $\sigma<0$, for instance with the change of variables 
$(x,y)\mapsto(x/2,y/2)$, $\sigma\mapsto\sigma^2$. In a completely equivalent way, 
one can work with $2\pi$-periodicity and take $\alpha = \frac{1}{2\pi} \ln |\sigma| + \frac{i}{2}\in\C$.

To solve \eqref{eq:linear_in_the_halfplane}, we first transform it into a periodic problem, and
then use separation of variables to write the solution in Fourier series. To this aim, we notice that $v$ solves \eqref{eq:linear_in_the_halfplane} if and only if
\[
	w(x,y) := e^{-\alpha x} v(x,y)
\]
solves
\begin{equation}\label{eq:linear_in_the_halfplane_w}
	\begin{cases}
		-\Delta w + (\omega e^{-2y} - 2 \alpha) w_x +  [(\alpha\omega - \mu)e^{-2y} - \alpha^2] w =  0 & x\in\R,\,y>0\\
	    w(x + 2\pi, y) = w(x,y)              & x\in\R,\,y\ge 0\\
		w(x,0)= e^{-\alpha x} \Phi(x) & 0\le x \le 2\pi.
	\end{cases}
\end{equation}
Of course, if $\alpha=0$ then $v$ and $w$ coincide. Either way, with a little abuse of notation,
we can extend $\Phi$ to $\R$ in such a way that $e^{-\alpha x} \Phi(x)$ is $2\pi$-periodic. At
least formally, we can expand $w$ in Fourier series, and write
\[
	w(x,y) = \sum_{k \in \Z} W_{k}(y) e^{i k x}.
\]
Plugging this expression into \eqref{eq:linear_in_the_halfplane_w} we obtain that
the coefficients $W_k:\overline{\R^+} \to \C$, $k \in \Z$ must solve the ordinary
differential equation
\begin{equation} \label{eq.w}
	W_k''(y)  = \left[(k-i \alpha)^2 + \left( \omega \alpha - \mu +i \omega k \right)
	e^{-2y} \right] W_k(y), \qquad y > 0.
\end{equation}
We can solve boundary value problems associated with \eqref{eq.w} by using the Fredholm Alternative and the Lax-Milgram Theorem, settled in complex Hilbert spaces. We are looking
for solutions of \eqref{eq:linear_in_the_halfplane_w} that change sign for $y\to+\infty$. As we will see in Lemma \ref{lem H1 and nodal}, this entails that the term corresponding to  $k=0$ in the expansion should not be present. For this reason we consider $k\neq0$ from now on.
\begin{lemma} \label{lem:weak_coerc}
For any $k \in \Z \setminus\{0\}$, $\alpha \in \R$, there exists a sequence
$\{\lambda_{n}\}_{n\in \N} \subset \C$, with $|\lambda_n| \to +\infty$ as
$n\to +\infty$, such that the problem
\begin{equation} \label{eq.w.2}
	\begin{cases}
		X_k''(y) = \left[(k-i \alpha)^2 + \left( \omega \alpha - \mu +i \omega k \right)
	e^{-2y} \right]  X_k(y), &y > 0,\\
		X_k(0) = 1, \qquad X_k \in H^1(\R^+;\C),
	\end{cases}
\end{equation}
admits a unique solution if and only if
\begin{equation}\label{eq:cond_res}
	\omega \alpha - \mu + i \omega k \not \in \{ \lambda_n\}_{n \in \N},
\end{equation}
while no solution exists in the complementary case.
\end{lemma}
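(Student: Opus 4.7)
The plan is to reformulate \eqref{eq.w.2} as a Fredholm equation for $Y := X_k - \phi$ in the complex Hilbert space $H^1_0(\R^+;\C)$, where $\phi$ is any fixed lift with $\phi(0)=1$ (for concreteness, $\phi(y) = e^{-y}$). Setting $\lambda := \omega\alpha - \mu + i\omega k$, the ODE is equivalent to
\[
L_0 Y + \lambda M Y = F_\phi,\qquad L_0 := -\partial_y^2 + (k-i\alpha)^2,\quad M Y := e^{-2y} Y,
\]
where $F_\phi \in H^{-1}(\R^+;\C)$ collects the inhomogeneous terms generated by $\phi$.

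The two structural facts I intend to establish are: (i) $L_0 \from H^1_0(\R^+;\C) \to H^{-1}(\R^+;\C)$ is an isomorphism whenever $k \ne 0$; and (ii) the multiplier $M$ is compact from $H^1_0$ into $H^{-1}$. For (i) the cleanest route is to build the Green's function explicitly out of the two exponential fundamental solutions $e^{\pm(k-i\alpha)y}$ of $L_0 X = 0$: since $k\ne 0$, exactly one of them lies in $H^1(\R^+)$, and it does not vanish at the origin, which yields simultaneously injectivity and a bounded right inverse. Equivalently, the spectrum of $L_0$ with Dirichlet trace is the ray $(k-i\alpha)^2 + [0,+\infty)$, which avoids $0$ because $(k-i\alpha)^2$ is not a non-positive real number when $k \ne 0$ (its imaginary part is $-2k\alpha$ and its real part is $k^2-\alpha^2$, which cannot vanish simultaneously for $k\ne 0$). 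For (ii), if $Y_n \rightharpoonup Y$ in $H^1_0$, Rellich's theorem delivers $L^2$-strong convergence on every bounded interval $[0,R]$, while the exponential weight $e^{-2y}$ absorbs the tail uniformly in $n$; hence $e^{-2y} Y_n \to e^{-2y} Y$ strongly in $L^2$, and a fortiori in $H^{-1}$.

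With these two ingredients the problem reduces to $(I + \lambda T) Y = L_0^{-1} F_\phi$, where $T := L_0^{-1} \circ M$ is compact on $H^1_0(\R^+;\C)$ and the pencil $\lambda \mapsto I + \lambda T$ is entire and invertible at $\lambda = 0$. The analytic Fredholm theorem then produces a discrete set $\{\lambda_n\} \subset \C$, with $|\lambda_n| \to +\infty$, such that $I + \lambda T$ is bijective iff $\lambda \notin \{\lambda_n\}$; translated back through $\lambda = \omega\alpha - \mu + i\omega k$, this is exactly \eqref{eq:cond_res} and yields existence and uniqueness in the non-resonant case. For the non-existence assertion at resonant values I would argue at the ODE level: since $k\ne 0$ the equation admits an exponential dichotomy at $+\infty$, so the space of $H^1(\R^+)$-solutions is at most one-dimensional, and resonance amounts to the (unique up to scaling) decaying solution $\psi_\lambda$ satisfying $\psi_\lambda(0)=0$, which is incompatible with $X_k(0) = 1$.

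The main technical difficulty will be handling the regime $|\alpha| \ge |k|$, in which the real part of the natural sesquilinear form attached to $L_0$ is not positive definite, so Lax--Milgram does not apply directly; moreover, since $H^1_0(\R^+)$ is not compactly embedded in $L^2(\R^+)$, a bare G\aa{}rding-plus-compact-embedding argument also fails. This is precisely why I favor the explicit Green's function (equivalently, spectral) construction for $L_0$, which delivers invertibility without requiring coerciveness, and then reserves the compactness argument for the genuinely decaying potential $e^{-2y}$.
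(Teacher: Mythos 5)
Your proposal is correct and takes a genuinely different route from the paper's. The paper works with the full sesquilinear form $a = a_R + ia_I$ associated with $-\partial_y^2 + (k-i\alpha)^2 + \lambda e^{-2y}$ on $H^1_0(\R^+;\C)$, applies the Babu\v{s}ka--Lax--Milgram theorem, and verifies the inf-sup conditions in the non-resonant case by a compactness/contradiction argument resting on the estimate \eqref{eq:stima_precoerc}, which bounds the modulus $|a(U,U)|$ from below even in the regime $|\alpha|>|k|$ where the real part of $a(U,U)$ is indefinite; non-existence at resonance is then deduced from Fredholm's compatibility condition, integrated by parts to the nonvanishing quantity $\bar V_n'(0)$. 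You instead factor the problem as a compact perturbation of the constant-coefficient operator $L_0 = -\partial_y^2 + (k-i\alpha)^2$, invert $L_0$ explicitly via its Green's function (your observation that $(k-i\alpha)^2$ is never a non-positive real number for $k\neq 0$ is exactly what keeps the Green's kernel exponentially bounded), and then apply the analytic Fredholm theorem to the pencil $I + \lambda L_0^{-1}M$. Your route buys you freedom from any coercivity-type estimate on the perturbed form, neatly side-stepping the indefiniteness you correctly flag; the paper's route buys the fact that the same inequality \eqref{eq:stima_precoerc} is re-used for the quantitative $k$-dependent bounds of Lemma~\ref{lem est xk}, which you would need to re-derive from the Green's function representation. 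Your ODE-level argument for the resonant case---the $H^1(\R^+)$ solution space of the equation is one-dimensional by exponential dichotomy, and resonance amounts to its generator vanishing at $y=0$---is equivalent to the paper's Fredholm computation, and is essentially Remark~\ref{rem:simpleeig} read in the other direction. One small streamlining worth noting: taking the lift $\phi = e^{-(k-i\alpha)y}$ rather than $e^{-y}$ makes $L_0\phi=0$, so the source term reduces to $-\lambda M\phi$, which is precisely what feeds the explicit Bessel-function formula of Lemma~\ref{lem:bessel}.
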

\begin{proof}
We shall consider the case $k \geq 1$, as the case $k \leq -1$ follows by the same arguments, up to the change of sign
\[
	(\alpha, \omega, \mu, k) \mapsto (-\alpha, -\omega, \mu, -k).
\]
In particular one can verify that $X_{-k}(y) = \overline{X_k(y)}$ for any $k \in \Z$ and $y\geq0$ 
(in case one of them exists).
We proceed through several steps.

\textbf{Step 1. Weak formulation of the problem.}
Letting $X_k = U + U_0$, where $U_0:=e^{-(k-i\alpha)y}$, we are led to find, if it exists, a function $U \in  H^1_0(\R^+; \C)$ solution of
\[
-U'' + \left[(k-i \alpha)^2 + \left( \omega \alpha - \mu +i \omega k \right)
	e^{-2y} \right]  U = - \left( \omega \alpha - \mu +i \omega k \right)
	e^{-2y}e^{-(k-i\alpha)y},\qquad y > 0.
\]
We settle the problem in the space
\[
	H = H_0^1(\R^+; \C), \qquad \|u\|_H^2 = \int_0^\infty |U'|^2 + |U|^2.
\]
To proceed, we introduce the sesquilinear forms $a_R, a_I$ as
\[
    a_R(U,V) = \int_0^\infty U' \bar V' + [(k^2-\alpha^2) + (\omega \alpha - \mu)e^{-2y}]U \bar{V}, \quad a_I(U,V) = \int_0^\infty (-2\alpha k + \omega k e^{-2y}) U \bar{V}
\]
and the antilinear form $l$ as
\begin{equation}\label{eq:l}
    l(V) = -(\omega \alpha -\mu + i \omega k) \int_0^\infty e^{-2y}U_0 \bar V=-(\omega \alpha -\mu + i \omega k) \int_0^\infty e^{-(k+2-i\alpha)y} \bar V.
\end{equation}
In this way, we are reduced to solve the following variational problem: finding $U \in  H$ such that
\begin{equation}\label{eq:PVA}
a(U,V) = a_R(U,V) + i a_I(U,V) = l(V) \qquad \forall V \in  H.
\end{equation}
Notice that  both $a$ and $l$ are continuous: indeed, since $|e^{-2y}|\le1$ for $y\ge0$, it is easy to see that
\[
    |a(U,V)| \leq \left(k^2 + \alpha^2 + \sqrt{(\omega \alpha-\mu)^2+(\omega k)^2}\right) \|u\|_H \|v\|_H.
\]
Similarly, for $l$ we obtain
\[
    |l(V)| \leq |(\omega \alpha -\mu + i \omega k)| \int_0^\infty e^{-(k+2)y} | V |  \leq \frac{\sqrt{(\omega \alpha-\mu)^2+(\omega k)^2}}{\sqrt{2(k+2)}} \left(\int_0^\infty  | V |^2 \right)^{1/2}.
\]
For future purposes we notice that, for every $U\in H$, both $a_R(U,U)$ and $a_I(U,U)$ are real
numbers: indeed, $a_R(U,U)$ and $a_I(U,U)$ are, respectively, the real and imaginary part of
$a(U,U)$. We can exploit the Cauchy-Schwarz inequality (for real $2$-dimensional vectors) to
find that
\begin{equation}\label{eq:stima_precoerc}
\begin{split}
\left|a(U,U)\right| &= \sup_{K \in \R} \frac{a_R(U,U) + K a_I(U,U)}{\sqrt{1+K^2}} \ge
\frac{k}{\sqrt{\alpha^2 + k^2}}\left( a_R(U,U) - \frac{\alpha}{k} a_I(U,U) \right)\\
&=   \frac{k}{\sqrt{\alpha^2 + k^2}}\int_0^\infty \left[|U'|^2 +  \left(k^2 + \alpha^2\right) |U|^2\right] -  \frac{k\mu}{\sqrt{\alpha^2 + k^2}}\int_0^\infty e^{-2y} |U|^2.
\end{split}
\end{equation}
In order to prove existence and uniqueness of a solution $U$ we shall make use of the classical Fredholm alternative theorem. In particular, we shall find that \eqref{eq:PVA} admits a unique solution $U \in H^1_0(\R^+; \C)$ if and only if $0$ is not an eigenvalue of $a$ (more precisely,
and equivalently, $0$ is not an eigenvalue of the conjugate transpose sequilinear form $a^\dag$).

\textbf{Step 2. A related eigenvalue problem.}
To proceed, we introduce the (adjoint) eigenvalue problem: finding $\lambda\in \C$ and
$V\in H\setminus\{0\}$ such that
\[
\int_0^\infty \left[U' \bar V' + (k-i\alpha)^2 U \bar{V}\right] + \lambda \int_0^\infty  e^{-2y}U \bar{V}
=0 \qquad \forall U \in  H.
\]
Defining the weighted space
\[
	L = \left\{U \in L^1_\loc(\R^+;\C) : \|U\|_L^2 = \int_0^\infty e^{-2y} |U|^2 < +\infty \right\},
\]
we have that $H \subset L = L^* \subset H^*$ is a Hilbert triplet, with $H$
compactly embedded in $L$ (see Lemma \ref{lem comp emb}). Then standard spectral
theory (see e.g.\ \cite[Ch.\ 3, Thm.\ 6.26]{MR1335452}) yields the existence of a
sequence of eigenvalues $\{\lambda_{n}\}_{n\in \N} \subset \C$, with $|\lambda_n| \to +\infty$, and it is straightforward to show that $V\neq0$ satisfies
\begin{equation}\label{eq:fredh}
a(U,V)= 0 \quad \forall U \in  H
\quad \iff \quad
\begin{array}{c}
\omega \alpha - \mu + i \omega k = \lambda_n,\\
\text{and $V=V_n$ is an associated eigenfunction.}
\end{array}
\end{equation}
Notice that each $\lambda_n$ is a simple eigenvalue, by uniqueness of the Cauchy problem for ODEs.

\textbf{Step 3. Application of the Babu\v{s}ka-Lax-Milgram theorem.} To conclude the
invertible case, we show that, if $\omega \alpha - \mu + i \omega k \neq
\lambda_n$, for every $n$, then there exists a unique solution to \eqref{eq:PVA}. To this
aim, we apply a generalization of the Lax-Milgram theorem due to Babu\v{s}ka
\cite[Thm. 2.1]{MR288971} (with $H_1=H_2=H$). After the previous steps, in order to
apply such result to \eqref{eq:PVA}, we only need to show that, if $\omega \alpha - \mu + i \omega k \neq \lambda_n$ for every $n$, then the following inf-sup conditions hold:
\[
\inf_{\|V\|_H=1} \sup_{\|U\|_H=1} |a(U,V)|\ge C_2>0,\qquad
\inf_{\|U\|_H=1} \sup_{\|V\|_H=1} |a(U,V)|\ge C_3>0,
\]
for suitable constants $C_2,C_3$.
We prove the first inequality, the second one being analogous. Assume by contradiction that the sequence $\{V_n\}_n$ satisfies
\[
\|V_n\|_H=1,\qquad\qquad  |a(U,V_n)|\le \frac{1}{n} \|U\|_H \quad \forall U \in  H.
\]
In particular, as $n\to+\infty$, $a(V_n,V_n)\to0$. Moreover, up to subsequences, $V_n$ converges to $V_\infty$, both weakly in $H$ and strongly
in $L$ (by compact embedding). Thus $a(U,V_\infty) = 0$ for every $U\in H$.
Since $\omega \alpha - \mu + i \omega k \neq \lambda_n$, for every $n$, and recalling
\eqref{eq:fredh}, we deduce that $V_\infty \equiv 0$. Since $k^2 \ge 1$,  \eqref{eq:stima_precoerc} yields
\[
\begin{split}
o(1) = |a(V_n,V_n)| \ge \frac{k}{\sqrt{\alpha^2 + k^2}} \|V_n\|^2_H -
\frac{k\mu}{\sqrt{\alpha^2 + k^2}} \|V_n\|^2_L = \frac{k}{\sqrt{\alpha^2 + k^2}} + o(1)
\end{split}
\]
as $n\to\infty$, a contradiction.

\textbf{Step 4. Non-existence in the resonant case.} Finally, assume that $\omega \alpha - \mu + i \omega k = \lambda_n$, for some $n$, and let $V_n\not\equiv0$ be an associated eigenfunction of the adjoint
problem:
\[
a(U,V_n) = \int_0^\infty \left[U' \bar V_n' + (k-i\alpha)^2 U \bar{V}_n\right] + \lambda_n \int_0^\infty  e^{-2y}U \bar{V}_n
=0  \qquad \forall U \in  H.
\]
This forces
\begin{equation}\label{eq:fredh_NO}
-\bar V_n'' + (k-i\alpha)^2 \bar V_n + \lambda_n e^{-2y} \bar{V}_n = 0\qquad\text{on }(0,\infty);
\end{equation}
in particular, $V_n\in H^2(0,+\infty)$, and thus $V'_n(y)\to0$ as $y\to+\infty$. Moreover,
by uniqueness of the Cauchy problem, $V'_n(0) \neq 0$.

In the case we are considering \eqref{eq:PVA} rewrites as
\[
a(U,V) = (-\lambda_n U_0,V)_L \qquad \forall V \in  H,
\]
where $U_0 = e^{-(k-i\alpha)y}$. By Fredholm's alternative, in this case \eqref{eq:PVA} is
solvable if and only if the compatibility condition
\[
(-\lambda_n U_0,V_n)_L =0
\]
holds true. Using \eqref{eq:fredh_NO} we have
\[
(-\lambda_n U_0,V_n)_L = - \lambda_n  \int_0^\infty  e^{-2y}U_0 \bar{V}_n =
U(0)\bar V'_n(0) + \int_0^\infty \left[U'_0 \bar V'_n + (k-i\alpha)^2 U_0 \bar{V}_n\right]
=\bar V'_n(0) \neq 0.\qedhere
\]
\end{proof}
The resonance set in the previous lemma can be characterized in terms of the zero set of
the following function $\modB_\nu$, depending on the complex parameter $\nu$:
\begin{equation}\label{eq:fintaBessel}
\modB_\nu(z) =   \sum_{n=0}^\infty \frac{1}{n! \Gamma (n+1+\nu)} \left( \frac{z}{4}\right)^{n}.
\end{equation}
Notice that, for any $\nu\in\C$, $\modB_\nu$ is analytic on
$\C$ (recall that $\Gamma$ has no zeroes, but only simple poles at each non-positive integer
$-k$: in such case, we understand $1/\Gamma(-k) = 0$). As a matter of fact, $\modB_\nu$ is related to $I_\nu$, the modified Bessel function of the first kind, with parameter $\nu\in\C$, by the formula
\begin{equation}\label{eq modB and I}
I_\nu(z) =  \left( \frac{z}{2}\right)^{\nu} \modB_\nu(z^2)
\end{equation}
(in turn, $I_\nu(z) = e^{-i\nu\pi/2} J_\nu(iz)$, where $J_\nu$ is the usual Bessel function of the
first kind). Notice that, in case $\nu\not\in\Z$, $I_\nu$ is a multivalued function because of
the complex exponentiation $z^\nu$. Nonetheless, the zero set of (any determination of) $I_\nu$
coincides with  the complex square root of the zero set of $\modB_\nu$, with the exception for $0$.
\begin{lemma} \label{lem:bessel}
For any $k \in \Z \setminus\{0\}$, $\alpha \in \R$, let $\{\lambda_{n}\}_{n\in \N} \subset \C$
denote the sequence defined in Lemma \ref{lem:weak_coerc}. Then
\[
\{\lambda_{n}\}_{n\in \N} = \{z\in\C\setminus\{0\} : \modB_{\sign(k)(k-i\alpha)}(z)=0\},
\]
where $\modB_\nu$ is defined in \eqref{eq:fintaBessel} for every $\nu\in\C$.

Moreover, whenever
$\lambda := \omega \alpha - \mu + i \omega k \not \in \{ \lambda_n\}_{n \in \N}$, the unique
solution of \eqref{eq.w.2} is
\[
X_k(y) =  \frac{\modB_{\nu}(\lambda e^{-2y})}{\modB_{\nu}(\lambda)}
e^{-\nu y}
\]
($X_k(y) = e^{-\nu y}$ in case $\lambda = 0$), where $\nu = \sign(k)(k-i\alpha)$ whenever
$k\neq 0$.
\end{lemma}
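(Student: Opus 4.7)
The plan is to solve the ODE in \eqref{eq.w.2} explicitly by reducing it to the modified Bessel equation, and then use the $H^1$ and normalization constraints to both obtain the displayed formula and to characterize the resonance set.

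First, I would note that $\nu := \sign(k)(k-i\alpha)$ satisfies $\nu^2 = (k-i\alpha)^2$ with $\Re(\nu) = |k|\ge 1 > 0$, so that the ODE in \eqref{eq.w.2} reads
\[
X_k''(y) = \bigl[\nu^2 + \lambda\, e^{-2y}\bigr]X_k(y), \qquad \lambda := \omega\alpha - \mu + i\omega k.
\]
Setting formally $z := \sqrt{\lambda}\,e^{-y}$ for $\lambda\ne 0$ (any branch of the square root, since only $z^2$ enters) transforms this into the modified Bessel equation $z^2 X'' + zX' - (z^2+\nu^2)X = 0$. In view of \eqref{eq modB and I}, the function
\[
y \longmapsto e^{-\nu y}\,\modB_\nu\!\bigl(\lambda e^{-2y}\bigr)
\]
is, up to a $\lambda$-dependent multiplicative constant, the pullback of $I_\nu(z)$, hence a solution of \eqref{eq.w.2}. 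Rather than carry out this substitution explicitly (which is delicate at $\lambda=0$), the identity can be verified directly from the series \eqref{eq:fintaBessel} by checking termwise the three-term recursion $4(n+1)(n+1+\nu)\,c_{n+1} = c_n$ for the coefficients $c_n := 1/(n!\,\Gamma(n+1+\nu)\,4^n)$; this calculation is equivalent to the ODE and covers the case $\lambda=0$ as well, where the formula reduces to $X_k(y) = e^{-\nu y}$.

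Next, I would isolate the $H^1(\R^+;\C)$ solutions. Since $\modB_\nu$ is entire and $\modB_\nu(0) = 1/\Gamma(1+\nu) \ne 0$, the candidate above is smooth on $[0,+\infty)$ and behaves as $y\to+\infty$ like $e^{-\nu y}/\Gamma(1+\nu)$, decaying exponentially with rate $|k|$; hence it lies in $H^1(\R^+;\C)$. By standard asymptotic theory for linear second order ODEs whose coefficient differs from the constant $\nu^2$ by an exponentially vanishing term (equivalently, by the known behavior of $I_{-\nu}$ or $K_\nu$ near the origin), any linearly independent solution grows like $e^{+|k| y}$ at infinity, and is therefore ruled out by the $H^1$ constraint. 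Thus the $H^1$ solutions form a one-dimensional space spanned by $y\mapsto e^{-\nu y}\modB_\nu(\lambda e^{-2y})$.

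Finally, imposing $X_k(0)=1$ reduces to dividing this basis element by $\modB_\nu(\lambda)$. If $\modB_\nu(\lambda)\ne 0$ the normalization is well-defined and yields the asserted formula, giving both existence and uniqueness; if $\modB_\nu(\lambda)=0$ then every $H^1$ solution vanishes at $y=0$, so no normalized solution exists. Combined with the fact that $\modB_\nu(0) = 1/\Gamma(1+\nu) \ne 0$ (which excludes $\lambda=0$ from the resonance set in any case), this identifies $\{\lambda_n\}_{n\in\N}$ with the nonzero zero set of $\modB_{\sign(k)(k-i\alpha)}$. The step I expect to require the most care is the $H^1$ uniqueness: one needs a clean justification that the two linearly independent solutions of the ODE have asymptotically opposite exponential rates $\pm|k|$. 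This can be handled either by direct appeal to the classical asymptotics of modified Bessel functions of complex order, or intrinsically via a Wronskian / variation-of-parameters argument applied to \eqref{eq.w.2} alongside the explicit decaying solution above.
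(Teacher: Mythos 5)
Your proof is correct, and its overall strategy coincides with the paper's: one verifies termwise, via the coefficient recursion, that $x_\nu(y) := e^{-\nu y}\modB_\nu(\lambda e^{-2y})$ solves the ODE in \eqref{eq.w.2}; notes that it decays with exponential rate $\real(\nu)=|k|\ge 1$, hence lies in $H^1(\R^+;\C)$; and then normalizes at $y=0$, which is possible exactly when $\modB_\nu(\lambda)\neq 0$, yielding both the formula for $X_k$ and the identification of the resonance set with the zero set of $\modB_\nu$. The one place where your route genuinely diverges is the uniqueness of the decaying solution up to scaling. The paper constructs the second, growing solution explicitly: for $\alpha\neq 0$ (so $\nu\notin\Z$) it is $x_{-\nu}(y)=e^{\nu y}\modB_{-\nu}(\lambda e^{-2y})$, with the opposite leading asymptotics; for $\alpha=0$ (so $\nu=k$ is a positive integer, $x_{-\nu}$ becomes a multiple of $x_\nu$, and both decay) it is built by differentiating in $\nu$, mirroring the classical derivation of $K_\nu$ from $I_{\pm\nu}$. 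You instead invoke asymptotic ODE theory: since $\real(\nu)>0$ and the perturbation $\lambda e^{-2y}$ of the constant coefficient $\nu^2$ is in $L^1(\R^+)$, a Levinson-type dichotomy produces a fundamental system asymptotic to $e^{\pm\nu y}$, so the complementary solution cannot be in $H^1$. This cleanly sidesteps the paper's case split on $\alpha$; your alternate suggestion of a Wronskian argument (the Wronskian of \eqref{eqn xab} is constant in $y$, forcing the other solution to grow) would also work. Either way, the argument is complete.
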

Equivalently, we could write
\[
	X_k(y) = \frac{I_\nu\left(\sqrt{\lambda}e^{-y}\right)}{I_\nu\left(\sqrt{\lambda}\right)}
\]
and such identity is not ambiguous as long as we choose the same determinations both in the 
numerator and in the denominator.
\begin{proof}
Again, we treat the case $k\ge1$, the case $k\le-1$ following with minor changes.
With the above notation
\[
\nu = k - i\alpha, \qquad \lambda = \omega \alpha - \mu +i \omega k,
\]
the second order linear ODE in \eqref{eq.w.2} writes
\begin{equation}\label{eqn xab}
	x''(y) = \left[ \nu^2 + \lambda e^{-2y} \right]  x(y) .
\end{equation}
We assume $\lambda \neq 0$, the complementary case being trivial. Let us consider the functions $x_{\pm\nu}(y)$ defined as
\[
x_{\pm\nu}(y) = \modB_{\pm\nu} (\lambda e^{-2y})e^{\mp\nu y} =
\sum_{n \geq 0} c_{\pm\nu,n} e^{(-2n\mp\nu) y}, \qquad \text{ where }c_{\pm\nu,n} =
\frac{1}{n! \Gamma (n+1\pm\nu)} \left( \frac{\lambda}{4}\right)^{n}
\]
(again, we understand $c_{\pm\nu,n} =0$ whenever $-(n+1\pm\nu)\in\N$).
We notice that $4n(n\pm\nu) c_{\pm\nu,n} = \lambda c_{\pm\nu,n-1}$. Then
\[
\begin{split}
x''_{\pm\nu}(y) &=
\sum_{n \geq 0} (-2n\mp \nu)^2c_{\pm\nu,n} e^{(-2n\mp \nu) y} =
\sum_{n \geq 0} \nu^2 c_{\pm\nu,n} e^{(-2n\mp \nu) y}  + \sum_{n \geq 0} 4n(n\pm\nu) c_{\pm\nu,n} e^{(-2n\mp \nu) y} \\
&= \nu^2 x_{\pm\nu}(y) + \lambda \sum_{n \geq 1} c_{\pm\nu,n-1} e^{(-2n\mp \nu)y} = \left[ \nu^2 +
\lambda e^{-2y} \right]  x_{\pm\nu}(y),
\end{split}
\]
that is, both $x_{\pm\nu}$ solve the second order linear ODE \eqref{eqn xab}.

Let us first assume that $\alpha\neq 0$. Then $-(n+1\pm\nu)\not\in\N$, for every $n$, and we obtain that $\modB_{\pm\nu} (\lambda e^{-2y}) = 1/\Gamma(1\pm\nu) + o (1)$, that is,
\[
x_{\pm\nu}(y) = \frac{1}{\Gamma(1\pm\nu)} e^{\mp \nu y} + o(e^{\mp \nu y})
\qquad \text{as }y\to+\infty.
\]
Then $x_{\pm\nu}$ are linearly independent, and any solution of \eqref{eqn xab} is of the form
\[
x(y) = C_+ x_\nu(y) + C_- x_{-\nu}(y),\qquad C_\pm\in\C.
\]
Since $\nu = k - i\alpha$ and $k\ge1$, we have that $x\in H^1(0,+\infty)$ if and only if
$C_-=0$. As a consequence, \eqref{eq.w.2} is (uniquely) solvable if and only if $x_\nu(0) = \modB_{\nu} (\lambda) \neq 0$, and the lemma follows.

On the other hand, let $\alpha =0$ (and $\lambda \neq 0$).
In this case $\nu = k \ge 1$, and
\[
c_{-k,n+k}= \frac{1}{(n+k)!n!} \left( \frac{\lambda}{4}\right)^{n+k} =
\left( \frac{\lambda}{4}\right)^{k} c_{k,n},
\]
for every $n\ge0$, therefore the functions $x_{\pm k}$ are no longer linearly independent.
By differentiating \eqref{eqn xab} with respect to $\nu$, one can easily see that a second
independent solution of \eqref{eqn xab} can be obtained as
\[
\tilde x_{k}  = \left[\left( \frac{\lambda}{4}\right)^{k}\frac{\partial x_{\nu}}{\partial\nu} -\frac{\partial x_{-\nu}}{\partial\nu}\right]_{\nu=k},
\]
mimicking the procedure that leads to the (modified) Bessel functions of the second kind.
Since  $\Gamma (n+1-k)$ has a simple pole at $n=0$, we have that
\[
\lim_{\nu\to k} \frac{\partial c_{-\nu,0}}{\partial\nu} = (-1)^k(k-1)!
\qquad
\text{and}
\qquad
\tilde x_{k}(y) = (-1)^k(k-1)!e^{k y} + o(e^{k y})
\text{ as }y\to+\infty
\]
(see \cite[Sec. 7.2.5, p. 9]{MR0058756vol2} for more details). Thus also
in this case $\tilde x_{k}\not \in H^1(0,+\infty)$, and the lemma follows.
\end{proof}
\begin{corollary}\label{coro:decad}
Let $X_k$ denote the solution of \eqref{eq.w.2}. Then, for some $C\neq0$,
\[
X_k(y) = C e^{-\sign(k) (k-i\alpha)y} + O\left(e^{-(|k|+2)y}\right)
\qquad\text{as }y\to+\infty.
\]
\end{corollary}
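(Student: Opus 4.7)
The plan is to extract the claimed asymptotic behavior directly from the closed-form expression for $X_k$ provided by Lemma \ref{lem:bessel}. Recall that, with $\nu = \sign(k)(k - i\alpha)$ and $\lambda = \omega\alpha - \mu + i\omega k$, one has
\[
X_k(y) = \frac{\modB_\nu(\lambda e^{-2y})}{\modB_\nu(\lambda)} \, e^{-\nu y},
\]
where $\modB_\nu(\lambda) \neq 0$ thanks to the non-resonance condition. In both cases $k\ge 1$ and $k\le -1$ one checks immediately that $\real(\nu)=|k|$, so the expected leading decay rate $e^{-\sign(k)(k-i\alpha)y}$ has modulus $e^{-|k|y}$, matching the statement.

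Next I would Taylor-expand the numerator using the defining power series
\[
\modB_\nu(z) = \frac{1}{\Gamma(1+\nu)} + \frac{z}{4\,\Gamma(2+\nu)} + \sum_{n\ge 2}\frac{1}{n!\,\Gamma(n+1+\nu)}\left(\frac{z}{4}\right)^n.
\]
Since $\real(1+\nu) = 1+|k|\ge 2$, the argument $1+\nu$ is never a non-positive integer, so $\Gamma(1+\nu)$ is finite and nonzero. Because $\modB_\nu$ is entire, the series converges absolutely and uniformly on any compact subset of $\C$; substituting $z=\lambda e^{-2y}$, the tail is bounded by a geometric series in $|\lambda|e^{-2y}$, giving the uniform estimate
\[
\modB_\nu(\lambda e^{-2y}) = \frac{1}{\Gamma(1+\nu)} + O(e^{-2y}) \qquad \text{as } y\to +\infty.
\]

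Multiplying through by $e^{-\nu y}/\modB_\nu(\lambda)$ produces
\[
X_k(y) = \frac{1}{\Gamma(1+\nu)\modB_\nu(\lambda)}\, e^{-\nu y} + O\!\left(e^{-(|k|+2)y}\right),
\]
so the constant $C = \bigl[\Gamma(1+\nu)\modB_\nu(\lambda)\bigr]^{-1}$ is the one claimed, and it is nonzero by the two facts above. The trivial case $\lambda=0$ (for which the formula reduces to $X_k(y)=e^{-\nu y}$, and the error term is absent) is handled by direct inspection with $C=1$.

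There is no real obstacle here: the corollary is essentially a one-line consequence of the explicit Bessel-type representation of Lemma \ref{lem:bessel} combined with analyticity of $\modB_\nu$ at the origin. The only point requiring mild attention is the bookkeeping of the sign of $k$ to confirm $\real(\nu)=|k|$ in every case, which is what guarantees both the exponential decay rate and the non-vanishing of $\Gamma(1+\nu)$.
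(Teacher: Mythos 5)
Your proof is correct and follows the same approach the paper implicitly uses: read off the asymptotics from the explicit formula $X_k(y)=\modB_\nu(\lambda e^{-2y})e^{-\nu y}/\modB_\nu(\lambda)$ of Lemma \ref{lem:bessel}, using $\modB_\nu(0)=1/\Gamma(1+\nu)\neq 0$ (since $\real(1+\nu)=1+|k|\ge 2$) and the analyticity of $\modB_\nu$ to bound the tail by $O(e^{-2y})$. You also correctly handle the degenerate case $\lambda=0$ and correctly identify non-resonance as the reason $\modB_\nu(\lambda)\neq0$, so nothing is missing.
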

\begin{remark}\label{rem:simpleeig}
As a byproduct of the proof of Lemma \ref{lem:bessel} we have that the eigenvalues 
$\lambda_n$ are all simple in $H^1_0(\R^+;\C)$. Indeed, the general solution 
of the corresponding eigenequation is a $2$-dimensional vector space of complex 
valued functions, but only a $1$-dimensional subspace consists of $H^1$ functions, 
of the form 
\[
C\modB_{\sign(k)(k-i\alpha)}(\lambda_n e^{-2y})e^{-\sign(k)(k-i\alpha) y},\qquad C\in\C.
\]
\end{remark}
In view of writing $w$ as a series in terms of the solutions $X_k$, we need to estimate the
asymptotic behaviors as $k \to \infty$ of their $L^2$ and $H^1$ norms.
\begin{lemma}\label{lem est xk}
Let $\alpha,\mu,\omega$ be fixed, in such a way that \eqref{eq:cond_res} holds true for every $k\neq0$. Then $X_k$ satisfies
\begin{equation}\label{eq:decad}
\left( \int_0^\infty  |X_k|^2\right)^{1/2} \le \frac{C}{\sqrt{|k|}},  \qquad \left( \int_0^\infty  |X_k'|^2\right)^{1/2} \le C\sqrt{|k|} \qquad \text{and} \qquad \| X_k \|_{L^\infty(0,+\infty)} \leq \sqrt{2} C,
\end{equation}
where $C$ depends only on $\alpha,\mu,\omega$.
\end{lemma}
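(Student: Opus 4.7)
The plan is to exploit the variational framework already set up in Step~1 of the proof of Lemma~\ref{lem:weak_coerc}, namely the decomposition $X_k = U_k + U_0$ with $U_0(y)=e^{-(k-i\alpha)y}$ (for $k\ge 1$) and $U_k\in H$ solving $a(U_k,V)=l(V)$, but now tracking the dependence on $k$ throughout. First I would upgrade the coercivity inequality \eqref{eq:stima_precoerc}: since $|k|/\sqrt{\alpha^2+k^2}$ is bounded away from zero uniformly for $|k|\ge 1$, and since $\int_0^\infty e^{-2y}|U|^2 \le \|U\|_{L^2}^2$, one obtains, for $|k|$ large enough to absorb the term $|\mu|\|U\|_{L^2}^2$ into the mass term $k^2\|U\|_{L^2}^2/\sqrt{\alpha^2+k^2}$, an estimate of the form
$$|a(U,U)|\ \ge\ c_0\bigl(\|U'\|_{L^2}^2 + k^2\|U\|_{L^2}^2\bigr),$$
with $c_0>0$ depending only on $\alpha,\mu,\omega$.

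In parallel, the antilinear form $l$ in \eqref{eq:l} can be controlled by Cauchy--Schwarz, exactly as in Step~1 of Lemma~\ref{lem:weak_coerc}, yielding
$$|l(V)|\ \le\ \frac{\sqrt{(\omega\alpha-\mu)^2 + \omega^2 k^2}}{\sqrt{2(|k|+2)}}\,\|V\|_{L^2}\ \le\ C_1\sqrt{|k|}\,\|V\|_{L^2},$$
for $|k|$ large. Testing $a(U_k,U_k)=l(U_k)$ against $V=U_k$ and combining these two bounds gives
$$c_0\|U_k'\|_{L^2}^2 + c_0 k^2\|U_k\|_{L^2}^2\ \le\ C_1\sqrt{|k|}\,\|U_k\|_{L^2}.$$
Keeping only the mass term first yields $\|U_k\|_{L^2}\le C|k|^{-3/2}$, and reinjecting this bound in the full inequality gives $\|U_k'\|_{L^2}\le C|k|^{-1/2}$. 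A direct computation shows $\|U_0\|_{L^2} = (2|k|)^{-1/2}$ and $\|U_0'\|_{L^2}^2 = (k^2+\alpha^2)/(2|k|)$, so adding the two contributions one reaches the first two estimates of \eqref{eq:decad} for all $|k|$ sufficiently large; the finitely many remaining values are handled by enlarging $C$, since under \eqref{eq:cond_res} each $X_k$ exists uniquely in $H^1$ by Lemma~\ref{lem:weak_coerc}.

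For the $L^\infty$ bound I would use the fundamental theorem of calculus applied to $|X_k|^2$. Since $X_k(0)=1$ and $\frac{d}{dy}|X_k|^2=2\real(X_k'\bar X_k)$, one has
$$|X_k(y)|^2\ =\ 1 + 2\real\!\int_0^y X_k'\bar X_k\ \le\ 1 + 2\|X_k'\|_{L^2}\|X_k\|_{L^2}\ \le\ 1 + 2C^2,$$
and renaming constants gives the claimed $\sqrt{2}C$ bound. The delicate point is to make sure the coercivity does not degenerate as $|k|\to\infty$: the decisive observation is that the mass term in \eqref{eq:stima_precoerc} grows quadratically in $k$ while the only negative contribution is $O(\|U\|_{L^2}^2)$, so for $|k|$ large it is harmlessly absorbed. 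This is exactly what allows the linear-in-$k$ right-hand side coming from $l$ to produce the two opposite-direction decay/growth rates $|k|^{-1/2}$ and $|k|^{1/2}$, whose product is uniformly bounded and yields the $L^\infty$ estimate for free.
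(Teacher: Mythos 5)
Your argument follows the paper's proof almost verbatim: same decomposition $X_k = U_k + U_0$ with $U_0=e^{-(k-i\alpha)y}$, same use of \eqref{eq:stima_precoerc} together with the bound on $l$ to extract $\|U_k\|_{L^2}=O(|k|^{-3/2})$ for $|k|$ large, then $\|U_k'\|_{L^2}$, then the explicit norms of $U_0$, with the finitely many small $|k|$ absorbed into $C$. The one small departure is the $L^\infty$ step, where you integrate $\frac{d}{dy}|X_k|^2$ outward from $y=0$ (using $X_k(0)=1$) rather than inward from $+\infty$ (using Corollary \ref{coro:decad} to know $X_k\to 0$ as in the paper); both are valid, yours yields the bound $\sqrt{1+2C^2}$ in place of the slightly cleaner $\sqrt{2}\,C$, and also has the minor benefit of not invoking Corollary \ref{coro:decad}.
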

\begin{proof}
As usual, for concreteness we assume $k\ge1$. As in the proof of Lemma \ref{lem:weak_coerc} we write
$X_k = U + e^{-(k-i\alpha)y}$.
In order to prove \eqref{eq:decad} we distinguish between two cases, corresponding to the instances $k$ small and $k$ large. Indeed for any fixed $\bar k$, which we will choose later in terms of $\alpha, \mu, \omega$, the estimate \eqref{eq:decad} is true for $k<\bar k$ and a suitable constant $C$. Next, for
$k\ge \bar k$, we estimate the norms of $U$ using the identity
\[
    |a(U,U)| = |l(U)|.
\]
Recalling \eqref{eq:l}, we have
\[
|l(U)| \le |\omega\alpha-\mu+i\omega k| \int_0^\infty |e^{-(k+2)y}| |U|
\leq \frac{\sqrt{(\omega \alpha-\mu)^2+(\omega k)^2}}{\sqrt{2(k+2)}} \left( \int_0^\infty  |U|^2  \right)^{1/2}
.
\]
Using \eqref{eq:stima_precoerc} we obtain
\[
\frac{k}{\sqrt{k^2+\alpha^2 }}\int_0^\infty \left[|U'|^2 +  \left(k^2 + \alpha^2
-\mu^+\right) |U|^2 \right]\leq \frac{\sqrt{(\omega \alpha-\mu)^2+(\omega k)^2}}{\sqrt{2(k+2)}} \left( \int_0^\infty  |U|^2  \right)^{1/2}.
\]
Then
\[
    \left( \int_0^\infty  |U|^2  \right)^{1/2} \leq
    \frac{\sqrt{k^2+\alpha^2}}{k\left(k^2 + \alpha^2 - \mu^+ \right)}\cdot \frac{\sqrt{(\omega \alpha-\mu)^2+(\omega k)^2}}{\sqrt{2(k+2)}} \leq \frac{|\omega|}{k^{3/2}},
\]
whence
\[
    \left( \int_0^\infty  |U'|^2\right)^{1/2} \leq \left[  \frac{\sqrt{k^2+\alpha^2}}{k} \frac{\sqrt{(\omega \alpha-\mu)^2+(\omega k)^2}}{\sqrt{2(k+2)}} \left(\int_0^\infty  | U |^2 \right)  \right]^{1/2}  \leq \frac{|\omega|^{3/2}}{k^{5/4}},
\]
for $k\ge \bar k$ sufficiently large (depending on $\omega, \mu,\alpha$).

Coming back to $X_k = U + e^{-(k-i\alpha)y}$, we finally obtain
\[
    \left( \int_0^\infty  |X_k|^2\right)^{1/2} \leq \left( \int_0^\infty  |U|^2\right)^{1/2}
    + \left( \int_0^\infty  e^{-2ky}\right)^{1/2}\le \frac{|\omega|}{k^{3/2}} + \frac{1}{\sqrt{2k}} \le \frac{1}{\sqrt{k}}
\]
and
\[
    \left( \int_0^\infty  |X_k'|^2\right)^{1/2} \leq \left( \int_0^\infty  |U'|^2\right)^{1/2}
    +\left( \int_0^\infty  |k-i\alpha|^2 e^{-2ky}\right)^{1/2}  \le \frac{|\omega|^{3/2}}{k^{5/4}} + \sqrt{\frac{k^2+\alpha^2}{2k}} \le \sqrt{k} ,
\]
for $k$ sufficiently large (depending on $\omega, \mu,\alpha$), concluding the $H^1$ estimates. 
Finally, by Corollary \eqref{coro:decad}, for any $y > 0$
\[
	X_k(y)^2 = -\int_y^\infty 2 X_k(t) X_k'(t) dt \leq 2 \left( \int_0^\infty  |X_k|^2\right)^{1/2} \left( \int_0^\infty  |X_k'|^2\right)^{1/2} \leq 2 C^2
\]
and the last estimate follows.
\end{proof}
Next we provide explicit sufficient conditions for the validity of condition
\eqref{eq:cond_res}.
\begin{lemma}\label{lem:strong_coercivity}
A sufficient condition for \eqref{eq:cond_res} to hold true is that
\begin{equation}\label{eqn ex coerc}
    \sup\left\{ \left(j_{\tau, 1} \right)^2 -\frac{\omega}{2\alpha} \tau^2  :  \tau > 0 \right\} > \mu - \frac{\omega}{2\alpha} \left( k^2+\alpha^2 \right),
\end{equation}
where $j_{\tau , 1}$ denotes the first (positive) zero of the standard Bessel function of first kind of order $\tau>0$.

This is the case, for instance, if
\begin{equation}\label{eq:_altre_coerc}
\text{either }\quad \mu < \left(j_{0 , 1}+\sqrt{k^2 + \alpha^2}\right)^2, \qquad
\text{or }\quad 	\frac{\omega}{\alpha} < 2.
\end{equation}
In particular, for any choice of $\alpha,\omega,\mu$, if $|k|$ is sufficiently large then \eqref{eq:cond_res} holds true.
\end{lemma}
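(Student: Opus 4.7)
The strategy is to argue by contradiction against \eqref{eq:cond_res}. Suppose $\lambda := \omega\alpha - \mu + i\omega k$ coincides with some $\lambda_n$ from Lemma~\ref{lem:weak_coerc}. Then there exists a non-trivial eigenfunction $V \in H^1_0(\R^+;\C)$ satisfying
\[
\int_0^\infty |V'|^2 + (k-i\alpha)^2 \int_0^\infty |V|^2 + \lambda \int_0^\infty e^{-2y}|V|^2 = 0.
\]
Taking imaginary and real parts (and using $k \neq 0$), I obtain the two identities
\[
\int_0^\infty |V|^2 = \frac{\omega}{2\alpha}\int_0^\infty e^{-2y}|V|^2, \qquad \int_0^\infty |V'|^2 = \Bigl[\mu - \frac{\omega(k^2+\alpha^2)}{2\alpha}\Bigr]\int_0^\infty e^{-2y}|V|^2.
\]
The first of these already rules out resonance unless $\omega/\alpha > 0$ (the edge cases $\alpha=0$ or $\omega=0$ either render \eqref{eqn ex coerc} vacuous or exclude resonance directly through the imaginary part alone), so from now on I restrict to $\omega/\alpha > 0$.

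The heart of the argument is to exploit the variational characterization of $(j_{\tau,1})^2$. Via the change of variable $r = e^{-y}$, the first positive zero of $J_\tau$ coincides with the first Dirichlet eigenvalue of the weighted Bessel operator of order $\tau$ on $(0,1)$, which translates into the Rayleigh formula
\[
(j_{\tau,1})^2 = \inf_{W \in H^1_0(\R^+;\C)\setminus\{0\}} \frac{\int_0^\infty |W'|^2 + \tau^2 \int_0^\infty |W|^2}{\int_0^\infty e^{-2y}|W|^2} \qquad \forall\, \tau > 0.
\]
Inserting $W = V$ and substituting the two identities above yields, for every $\tau > 0$,
\[
(j_{\tau,1})^2 - \frac{\omega}{2\alpha}\tau^2 \;\le\; \mu - \frac{\omega}{2\alpha}(k^2 + \alpha^2),
\]
which upon taking the supremum in $\tau > 0$ directly contradicts \eqref{eqn ex coerc}, completing the main implication.

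For the two sufficient conditions in \eqref{eq:_altre_coerc}: when $\omega/\alpha < 2$, the elementary lower bound $j_{\tau,1}^2 \ge \tau^2$ (immediate from $e^{-2y}\le 1$ in the above Rayleigh quotient) gives $(j_{\tau,1})^2 - \frac{\omega}{2\alpha}\tau^2 \ge (1 - \omega/(2\alpha))\tau^2 \to +\infty$ as $\tau \to +\infty$, so the supremum in \eqref{eqn ex coerc} is $+\infty$ (the subcase $\omega\alpha\le 0$ is even simpler as then $-\omega\tau^2/(2\alpha)\ge 0$). When $\mu < (j_{0,1} + \sqrt{k^2+\alpha^2})^2$, I would evaluate \eqref{eqn ex coerc} at the single test value $\tau = \sqrt{k^2+\alpha^2}$, reducing the inequality to $(j_{\tau,1})^2 > \mu$; this follows from the classical Bessel-zero bound $j_{\nu,1} \ge j_{0,1} + \nu$ for $\nu\ge 0$. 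Finally, for large $|k|$ at fixed $\mu,\alpha,\omega$: if $\omega/\alpha > 0$ the RHS of \eqref{eqn ex coerc} tends to $-\infty$ while the LHS stays bounded below by $(j_{0,1})^2$ (taking $\tau \to 0^+$), and if $\omega/\alpha \le 0$ no resonance exists by the imaginary-part analysis alone.

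The main obstacle is justifying the inequality $j_{\nu,1} \ge j_{0,1} + \nu$ needed for the first half of \eqref{eq:_altre_coerc}: this is a classical result on Bessel zeros, following from the monotonicity $\tfrac{d}{d\nu}j_{\nu,1} \ge 1$ (equivalently from convexity of $\nu\mapsto j_{\nu,1}$), and can be quoted from standard references on Bessel functions.
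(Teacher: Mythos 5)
Your proof is correct, and it reaches the conclusion by a route that is genuinely different in organization, though it hinges on the same key ingredient. You argue by contradiction at the eigenvalue level: assume $\lambda = \omega\alpha - \mu + i\omega k$ is resonant, take an eigenfunction $V$ of the adjoint problem, extract the real and imaginary part identities from the eigenvalue relation, and then plug $V$ into the Rayleigh quotient characterization of $(j_{\tau,1})^2$ to contradict \eqref{eqn ex coerc}. The paper instead proves coercivity of the sesquilinear form $a$ under \eqref{eqn ex coerc}, by choosing $K\in\R$ (parameterized by $\tau$) so that $a_R + K a_I$ reduces to a one-parameter family of real quadratic forms, and then invoking Lemma \ref{lem coer}. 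Both arguments rest on the identical weighted Poincar\'e inequality (Lemma \ref{lem coer}); yours uses it as a Rayleigh lower bound applied to the resonant eigenfunction, while the paper uses it as a coercivity certificate. Your version proves exactly what the statement asks (non-resonance) and is somewhat leaner, since it bypasses the coercivity conclusion, which is strictly stronger than required once Lemma \ref{lem:weak_coerc} is in place. The paper's version buys you the coercivity constant, which is occasionally useful (e.g.\ for quantitative estimates), but is not used elsewhere in the text. Your handling of the edge cases ($\alpha=0$, $\omega=0$, $\omega/\alpha\le 0$), of the two explicit conditions in \eqref{eq:_altre_coerc} (via $\tau=\sqrt{k^2+\alpha^2}$ and $\tau\to+\infty$), and of the large-$|k|$ claim all check out, and your appeal to $j_{\nu,1}\ge j_{0,1}+\nu$ is the same classical fact the paper cites from McCann--Love. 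One small point worth making explicit: Lemma \ref{lem coer} is stated for real-valued $u$, so when you insert the complex eigenfunction $V$ into the Rayleigh quotient you should note that the quotient is a mediant of the quotients for $\real V$ and $\imag V$ and is therefore still bounded below by $(j_{\tau,1})^2$.
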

\begin{proof}
Using the notation introduced in the proof of Lemma \ref{lem:weak_coerc}, we are going to show that, under
the present assumptions, the sesquilinear form $a$ is coercive. By the first estimate in \eqref{eq:stima_precoerc},  this follows once we find $K\in\R$ such that the quadratic form (with real coefficients)
\[
a_R(U,U) + a_I(U,U) K = \int_0^\infty |U'|^2 +  \left(k^2 - \alpha^2 - 2\alpha k K \right) |U|^2  + \left( (\omega \alpha - \mu) + \omega k K \right)e^{-2y} |U|^2
\]
is strictly positive. To this aim it is not difficult to check that we have to ask
$k^2 - \alpha^2 - 2\alpha k K >0$. For this reason it is convenient to introduce the
parameters $\tau > 0$, $b=b(\tau)$, such that
\[
    K = \frac{k^2 - \alpha^2 - \tau^2}{2\alpha k}, \qquad b = -\left( (\omega \alpha - \mu) + \omega k K \right)=  \mu +
    \frac{\omega}{2\alpha} \left( \tau^2 - (k^2+\alpha^2)\right).
\]
In this way we are reduced to find $\tau>0$ such that the quadratic form
\[
U\mapsto\int_0^\infty |U'|^2 +  (\tau^2 - b e^{-2y}) |U|^2
\]
is strictly positive. This quadratic form can be studied by standard arguments, we postpone the details to Lemma \ref{lem coer} in the Appendix. We obtain that it is coercive if and only if
\[
   b= \mu + \frac{\omega}{2\alpha} \left( \tau^2 - (k^2+\alpha^2)\right) <  \left(j_{\tau , 1}\right)^2,
\]
and \eqref{eqn ex coerc} follows. In order to make this condition more explicit, we exploit the fact that
\[
j_{\tau , 1} \ge j_{0 , 1}+\tau,\qquad\text{for every }\tau\ge0
\]
(see \cite{mccann_love_1982}). Therefore, a stronger condition than \eqref{eqn ex coerc} is
\[
\mu + \frac{\omega}{2\alpha} \left( \tau^2 - (k^2+\alpha^2)\right) <
\left(j_{0 , 1}+\tau\right)^2,
\qquad\text{for some }\tau>0.
\]
The conditions in \eqref{eq:_altre_coerc} follow by taking either
$\tau^2 = k^2+\alpha^2$, or $\tau \to +\infty$, respectively.
\end{proof}
\begin{corollary}
Let $\alpha,\mu,\omega$ be fixed, with
\begin{equation}\label{eq:mugei}
\mu < \left(j_{0 , 1}+1\right)^2.
\end{equation}
Then \eqref{eq:cond_res} holds true for every $k\neq0$.
\end{corollary}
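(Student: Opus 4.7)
The plan is essentially to reduce the statement to a direct application of Lemma \ref{lem:strong_coercivity}, specifically to the first of the two explicit conditions listed in \eqref{eq:_altre_coerc}, namely $\mu < (j_{0,1}+\sqrt{k^2+\alpha^2})^2$. So the whole content of the corollary is a simple monotonicity observation in the parameter $k$.

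First, I would note that the hypothesis $k\in\Z\setminus\{0\}$ forces $|k|\ge 1$, and therefore
\[
\sqrt{k^2+\alpha^2}\ge\sqrt{1+\alpha^2}\ge 1,
\]
with equality only in the trivial case $\alpha=0$, $|k|=1$. Since $j_{0,1}>0$, squaring preserves the inequality and gives
\[
(j_{0,1}+\sqrt{k^2+\alpha^2})^2\ge (j_{0,1}+1)^2>\mu
\]
by the assumption \eqref{eq:mugei}. Thus the first sufficient condition in \eqref{eq:_altre_coerc} is satisfied for every $k\neq 0$, and Lemma \ref{lem:strong_coercivity} yields the non-resonance condition \eqref{eq:cond_res} for all such $k$.

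There is no real obstacle here: the only mild point to stress is that one needs $|k|\ge 1$ as a uniform lower bound, which is where the integrality of $k$ (and the exclusion of $k=0$, consistent with the Fourier expansion argument preceding Lemma \ref{lem:weak_coerc}) plays its role. The number $(j_{0,1}+1)^2$ is simply the value of $(j_{0,1}+\sqrt{k^2+\alpha^2})^2$ at the worst admissible $k$, i.e.\ the minimum over $k\in\Z\setminus\{0\}$ and $\alpha\in\R$, so the corollary gives a threshold on $\mu$ that is sufficient to ensure non-resonance for the entire Fourier expansion simultaneously.
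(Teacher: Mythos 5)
Your proof is correct and matches exactly what the paper intends: the corollary is stated without proof because it is an immediate consequence of the first sufficient condition in \eqref{eq:_altre_coerc} from Lemma \ref{lem:strong_coercivity}, combined with the observation that $\sqrt{k^2+\alpha^2}\ge 1$ for every integer $k\neq 0$ and every $\alpha\in\R$. Your identification of $(j_{0,1}+1)^2$ as the infimum of $(j_{0,1}+\sqrt{k^2+\alpha^2})^2$ over the admissible parameter range is precisely the point.
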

We are ready to state and prove the main result of this section. For any $\Phi \in
\mathrm{Lip}([0,2\pi])$ we denote the Fourier coefficients of $e^{-\alpha x}\Phi(x)$ as
\[
\phi_k = \frac{1}{2\pi}\int_0^{2\pi} e^{-(ik+\alpha) x}\Phi(x)\,dx, \qquad k\in\Z.
\]
\begin{proposition} \label{prop:existence}
	Let $\alpha,\mu,\omega$ be fixed and $\Phi \in \mathrm{Lip}([0,2\pi])$. Let us assume that
	\begin{itemize}
		\item $\mu < \left(j_{0 , 1}+1\right)^2\simeq 3.4^2$,
		\item $\Phi(0)=\Phi(2\pi)=0$ and $\phi_0=\int_0^{2\pi} e^{-\alpha x}\Phi(x)\,dx=0$.
	\end{itemize}
	Then the functions
	\begin{equation} \label{eq.v}
	w(x,y)= \sum_{k\in \Z\setminus\{0\}}  \phi_k X_{k}(y) e^{ikx},\qquad
	v(x,y)= e^{\alpha x} w(x,y),
	\end{equation}
	where the functions $X_{k}$ are as in Lemmas \ref{lem:weak_coerc}, \ref{lem:bessel}, satisfy
	\begin{enumerate}
		\item $w \in H^1(\{(x,y) \in \R \times \R^+ : a < x + l y < b \})$ for any $l \in \R$ and $a<b$, and it solves \eqref{eq:linear_in_the_halfplane_w};
		\item $v \in H^1(\{(x,y) \in \R \times \R^+ : a < x + l y < b \})$ for any $l$ such that $l\alpha \geq 0$ and for every $a<b$, and it solves \eqref{eq:linear_in_the_halfplane};
		\item both $v$ and $w$ are analytic in $\R\times\R^+$, and $C^{0,\alpha}$ up to $y=0$, for every $\alpha<1$.
	\end{enumerate}
\end{proposition}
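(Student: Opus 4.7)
I would define the partial sums
\[
w_N(x,y)=\sum_{0<|k|\le N}\phi_k\, X_k(y)\, e^{ikx}
\]
and show that $w_N\to w$ in $H^1$ on every strip $\Sigma_{l,a,b}=\{(x,y):a<x+ly<b,\ y>0\}$. A direct computation based on the ODE \eqref{eq.w} verifies that each summand $X_k(y)e^{ikx}$ is an analytic solution of \eqref{eq:linear_in_the_halfplane_w}, so the same holds for every $w_N$. Once the $H^1$ convergence is established, the limit $w$ inherits \eqref{eq:linear_in_the_halfplane_w} in weak form, and the remaining assertions follow by standard elliptic regularity together with a direct transfer to $v=e^{\alpha x}w$.

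To prove the $H^1$ convergence, I would exploit the $2\pi$-periodicity of every $w_N$ in $x$. On the slice $\{a-ly<x<b-ly\}$, at fixed $y$, Parseval's identity gives $\int|w_N-w_M|^2\,dx\le C_{a,b}\sum_{M<|k|\le N}|\phi_k|^2|X_k(y)|^2$, with an analogous bound (picking up a factor $k^2$) for the $\partial_x$ derivative. Integrating in $y$ and using Lemma \ref{lem est xk}, one gets
\[
\|w_N-w_M\|_{H^1(\Sigma_{l,a,b})}^2 \le C_{a,b}\sum_{M<|k|\le N}\Bigl[(1+k^2)|\phi_k|^2\|X_k\|_{L^2(\R^+)}^2+|\phi_k|^2\|X_k'\|_{L^2(\R^+)}^2\Bigr]\le C'_{a,b}\sum_{M<|k|\le N}|k|\,|\phi_k|^2.
\]
The Lipschitz regularity of $\Phi$, together with $\Phi(0)=\Phi(2\pi)=0$, ensures that the $2\pi$-periodic extension of $e^{-\alpha x}\Phi(x)$ belongs to $H^1_\per(\R)$, so $\sum_k k^2|\phi_k|^2<+\infty$ and hence $\sum_k |k|\,|\phi_k|^2<+\infty$. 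This yields the Cauchy property, and passing to the limit in the weak formulation identifies $w$ as a distributional solution of \eqref{eq:linear_in_the_halfplane_w}.

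For assertion (3), interior analyticity of $w$ follows from the classical theorem (due to Morrey) stating that $H^1_\loc$ weak solutions of linear uniformly elliptic PDEs with real-analytic coefficients are real-analytic; the equation \eqref{eq:linear_in_the_halfplane_w} satisfies these hypotheses on $\R\times\R^+$. Near the flat boundary $\{y=0\}$, the Dirichlet datum $e^{-\alpha x}\Phi(x)$ is Lipschitz, so standard boundary elliptic regularity (Schauder together with interpolation, or De Giorgi--Nash--Moser) gives $w\in C^{0,\alpha}_\loc$ up to $\{y=0\}$ for every $\alpha<1$. The properties of $v=e^{\alpha x}w$ follow by direct substitution: multiplication by $e^{\alpha x}$ exactly absorbs the drift $-2\alpha\partial_x$ and the zero-order term $-\alpha^2$ in \eqref{eq:linear_in_the_halfplane_w}, producing \eqref{eq:linear_in_the_halfplane}; the constraint $l\alpha\ge 0$ guarantees that the weight $e^{\alpha x}$ remains bounded on $\Sigma_{l,a,b}$, because on this strip $x=(x+ly)-ly$ is one-sidedly bounded in $y\ge0$ precisely under this sign condition, and so the $H^1$ and H\"older bounds transfer from $w$ to $v$ up to multiplicative constants.

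The main obstacle is the sharpness of the $\partial_x$ estimate: the contribution $|k|^2|\phi_k|^2\|X_k\|_{L^2}^2$ is of order $|k|\,|\phi_k|^2$, summable only thanks to the $H^1$-type control on the trace implied by the Lipschitz hypothesis. In particular, the decay rate $\|X_k\|_{L^2}=O(|k|^{-1/2})$ provided by Lemma \ref{lem est xk} is used at its optimal rate, and a merely continuous $\Phi$ would not suffice. A secondary but delicate point is the compatibility condition $\phi_0=0$: under the non-resonance assumption on $(\mu,\omega,\alpha)$ the ODE in \eqref{eq.w} admits no nontrivial $H^1$ solution for $k=0$, so that Fourier mode must be absent, and consistency with the Dirichlet datum then forces $\int_0^{2\pi}e^{-\alpha x}\Phi(x)\,dx=0$.
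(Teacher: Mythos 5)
Your proposal is correct and follows essentially the same strategy as the paper: express $w$ as a Fourier series in $x$, use the $H^1$-estimates on $X_k$ from Lemma \ref{lem est xk} together with the $H^1(\mathbb S^1)$-regularity of the $2\pi$-periodic extension of $e^{-\alpha x}\Phi$ (hence $\sum_k k^2|\phi_k|^2<\infty$) to obtain $H^1$-convergence of the series on any strip, then conclude by elliptic regularity and the substitution $v=e^{\alpha x}w$, with the constraint $l\alpha\ge 0$ exactly controlling the exponential weight on oblique strips. The only presentational difference is that the paper establishes convergence directly via orthogonality of $\{X_k(y)e^{ikx}\}$ in $H^1((0,2\pi)\times\R^+)$ and then invokes $2\pi$-periodicity to pass to general strips, whereas you argue slice-by-slice with Parseval and integrate in $y$; these are equivalent.
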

\begin{proof}[Proof of Proposition \ref{prop:existence}]
	In view of Lemma \ref{lem:weak_coerc}, we have that all the terms in the series in \eqref{eq.v}
	are smooth and satisfy the differential equations in \eqref{eq:linear_in_the_halfplane_w}.
	We now show that the  series converges in $H^1$, ensuring that
	$w$ satisfies the corresponding equation too. We start by observing that, by construction, the family $\{(x,y) \mapsto X_k(y) e^{ikx}\}_{k \in \Z \setminus\{0\}}$ is orthogonal in $H^1(S)$, $S=(0,2\pi) \times \R^+$, and in particular, for any $k, h \in \Z \setminus \{0\}$ and $k \neq h$, we have
	\[
	\int_{S} X_{k}(y) e^{ikx} \cdot  \overline{(X_{h}(y) e^{ihx})} = 0, \qquad \int_{S} X_{k}'(y) e^{ikx} \cdot  \overline{(X_{h}'(y) e^{ihx})} = 0
	\]
	and, recalling \eqref{eq:decad},
	\[
	\int_{S} \left|X_{k}(y) e^{ikx} \right|^2 \le \frac{C}{|k|},
	\qquad
	\int_{S} \left|X_{k}'(y) e^{ikx} \right|^2 \le C|k|,
	\qquad
	\int_{S} \left|X_{k}(y) \left(e^{ikx}\right)' \right|^2 \le C|k|.
	\]
	On the other hand, since $x \mapsto e^{-\alpha x} \Phi(x)$ can be extended to a $2\pi$-periodic Lipschitz continuous function, it is an $H^1$-function on $\mathbb S^1$, and
	its Fourier coefficients $\phi_k$ satisfy
	\[
	\sum_{k \in \Z} k^2|\phi_k|^2 < +\infty
	\]
	(recall that $\phi_0 = 0$). Combining the above inequalities we infer
	\[
	\left\| \sum_{k\neq0}   W_k(y)e^{ikx} \right\|_{H^1( S)}^2 \leq C \sum_{k \geq 1} (|\phi_k|^2 + |\phi_{-k}|^2) \left( \frac{1}{|k|} + |k|\right) < +\infty.
	\]
	We conclude that the series defining $w$ converges in $H^1(S)$, making $w$ a weak solution of \eqref{eq:linear_in_the_halfplane_w}. Since $w$ is periodic in the $x$ direction, we deduce
	that it belongs to $H^1((a,b)\times \R^+)$ for every $a<b$. Exploiting once again the periodicity in $x$ of $w$, we can readily infer that $w \in H^1(\{(x,y) \in \R \times \R^+ : a < x + l y < b \})$ for any $l\in \R$ and $a<b$. Moreover, by elliptic regularity,  $w$ is analytic in $\R\times \R^+$ and H\"older continuous up to the boundary. Analogous conclusions for the function $v$ can be drawn by the fact that $v(x,y)= e^{\alpha x} w(x,y)$, the only difference being that we need to exploit the assumption $l\alpha \geq 0$ in order to estimate the exponential factor.
\end{proof}
We conclude this section by showing that the Fourier expansions of the functions $w$ and $v$ can be exploited to give a description of their nodal sets for $y$ large.
\begin{lemma}\label{lem H1 and nodal}
	We consider again the assumptions of Proposition \ref{prop:existence}. Let $n \ge 1$ be the largest integer such that
	\[
		\phi_k = 0 \quad \forall |k| < n.
	\]
	Then there exists $y^* > 0$ and $2n$ disjoint simple curves $\Gamma_1, \dots, \Gamma_{2n}$ such that
	\begin{equation}\label{eqn gamma up}
		\{ (x,y) \in \R \times (y^*, +\infty) : w(x,y) = 0 (=v(x,y)) \} = \bigcup_{\substack{j = 1, \dots, 2n\\ h \in \Z}} \Gamma_j + (2\pi h, 0).
	\end{equation}
	The curves $\Gamma_j$ are asymptotic to evenly spaced parallel lines:
	there exist $\beta \in \R$  such that
	\[
	(x,y) \in \Gamma_j
	\qquad\iff\qquad
	\alpha y + n x = \beta + \pi j + o_y (1)\text{ as }y\to+\infty.
	\]
\end{lemma}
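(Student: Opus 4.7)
The plan is to isolate the leading Fourier modes of $w$ as $y\to+\infty$ and then apply the implicit function theorem to locate the nodal set. By the hypothesis $\phi_k=0$ for $0<|k|<n$, the series \eqref{eq.v} reads
\[
w(x,y)=\phi_n X_n(y)e^{inx}+\phi_{-n}X_{-n}(y)e^{-inx}+\sum_{|k|>n}\phi_k X_k(y)e^{ikx}.
\]
Corollary \ref{coro:decad} supplies nonzero constants $C_{\pm n}\in\C$ so that $X_{\pm n}(y)=C_{\pm n}e^{-(n\mp i\alpha)y}+O(e^{-(n+2)y})$. Since $\Phi$ is real-valued, $\phi_{-n}=\overline{\phi_n}$ and, by the conjugation symmetry $X_{-k}=\overline{X_k}$ already observed in the proof of Lemma \ref{lem:weak_coerc}, $C_{-n}=\overline{C_n}$. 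Writing $\phi_n C_n=\rho e^{i\theta_0}$ with $\rho>0$, I can thus present $w$ in the form
\[
e^{ny}w(x,y)=2\rho\cos(\alpha y + nx+\theta_0)+R(x,y),
\]
where $R$ collects the $O(e^{-2y})$ correction from $|k|=n$ together with the entire tail $|k|>n$.

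The technical heart of the argument is to show that $R$ and $\partial_x R$ vanish uniformly in $x$ as $y\to+\infty$. For this I would first establish that the constant $C_k$ from Corollary \ref{coro:decad} stays bounded as $|k|\to+\infty$: via Lemma \ref{lem:bessel} one has $C_k=1/\modB_{k-i\alpha}(\lambda_k)$, and the leading term $1/\Gamma(1+k-i\alpha)$ in the power series \eqref{eq:fintaBessel} together with the factorial growth of $\Gamma(1+k-i\alpha)$ against the linear growth of $\lambda_k$ shows that $|C_k|$ is bounded above and below. This yields the uniform pointwise bound $|X_k(y)|\le M e^{-|k|y}$ for $k\neq 0$ and $y\ge y_0$, and combined with $\sum_k|\phi_k|<+\infty$ (a consequence, via Cauchy--Schwarz against $\sum k^{-2}$, of the Lipschitz regularity of $\Phi$) it gives
\[
|R(x,y)|\le M e^{-y}\sum_{|k|>n}|\phi_k|+O(e^{-2y})\longrightarrow 0
\]
uniformly in $x$. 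The analogous estimate, carrying one extra factor $|k|$ and invoking $\sum k^2|\phi_k|^2<+\infty$, produces the same decay for $\partial_x R$. This uniform control of $|C_k|$ in $k$ is the delicate point I expect to require the most care.

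With $R$ and $\partial_x R$ uniformly small for $y\ge y^*$, the conclusion is an application of the implicit function theorem to the equation $e^{ny}w(x,y)=0$. The leading profile $2\rho\cos(\alpha y + nx+\theta_0)$ vanishes exactly along the parallel lines $\alpha y + nx=\tfrac{\pi}{2}-\theta_0+\pi j$, $j\in\Z$, on which the $x$-derivative has constant modulus $2n\rho>0$. Choosing $y^*$ so large that $|R|+|\partial_x R|$ lies below this transversality threshold, the perturbation produces, near each such line, a unique simple $C^1$ branch $x=x_j(y)$ with $\alpha y + nx_j(y)=\tfrac{\pi}{2}+\pi j-\theta_0+o_y(1)$. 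The $x$-periodicity of the zero set of $w$ (modulo the harmless multiplicative factor $e^{2\pi\alpha}$) groups these branches into $2n$ representatives per strip of width $2\pi$, yielding \eqref{eqn gamma up} with $\beta=\tfrac{\pi}{2}-\theta_0$; the identical description transfers to $v=e^{\alpha x}w$ since $e^{\alpha x}$ never vanishes.
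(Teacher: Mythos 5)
Your proposal follows the same route as the paper: isolate the two leading Fourier modes $\phi_n X_n e^{inx}+\phi_{-n}X_{-n}e^{-inx}$, use $\phi_{-n}=\overline{\phi_n}$ and $X_{-n}=\overline{X_n}$ to turn them into a trigonometric profile times $e^{-ny}$, show the tail and its $x$-derivative decay uniformly, and conclude by the implicit function theorem. What you do \emph{more} than the paper is to flag, and try to justify, that the leading constant $C_k$ from Corollary~\ref{coro:decad} is bounded in $k$ — a uniformity the paper uses silently when it writes the bound $C\sum_{k\ge n+1}(|\phi_k|+|\phi_{-k}|)e^{-ky}$. That is a genuinely useful observation, but it is precisely there that your write-up goes wrong.

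From Lemma~\ref{lem:bessel}, $X_k(y)=\dfrac{\modB_\nu(\lambda e^{-2y})}{\modB_\nu(\lambda)}\,e^{-\nu y}$ with $\nu=\sign(k)(k-i\alpha)$, so letting $y\to+\infty$ the leading coefficient is
\[
C_k=\frac{\modB_\nu(0)}{\modB_\nu(\lambda)}=\frac{1}{\Gamma(1+\nu)\,\modB_\nu(\lambda)},
\]
not $1/\modB_\nu(\lambda)$ as you wrote. With your formula and your own observation that $\modB_\nu(\lambda)\sim 1/\Gamma(1+\nu)$, you would obtain $|C_k|\sim|\Gamma(1+\nu)|\to\infty$, the opposite of what you assert; the stated reasoning (``factorial growth of $\Gamma$ against linear growth of $\lambda_k$'') is thus internally inconsistent. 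The correct route is to observe that
\[
\Gamma(1+\nu)\modB_\nu(\lambda)=\sum_{n\ge0}\frac{1}{n!}\prod_{j=1}^n\frac{\lambda/4}{j+\nu},
\]
and since $|\lambda|$ and $|j+\nu|$ both grow linearly in $|k|$, each $n$-th term is $O\!\left((|\omega|/4+\varepsilon)^n/n!\right)$ uniformly for $|k|$ large; the sum converges and tends to $e^{i\omega/4}$, which has modulus one. Hence $|C_k|$ is bounded above and below, and the same ratio control handles the $O(e^{-(|k|+2)y})$ remainder of $X_k$ uniformly in $k$. With this repair, your uniform estimates on $R$ and $\partial_x R$, and the transversality step via the implicit function theorem, are sound and give $\beta=\tfrac{\pi}{2}-\theta_0$ as you claim.
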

\begin{proof}
	By Lemma \ref{lem est xk} we have that
	\[
	\sup_{ (x,y)\in \R \times \R^+}|w(x,y)| \leq \sup_{y>0}\sum_{k\geq n}  |\phi_k| |X_{k}(y)|  +   |\phi_{-k}| |X_{-k}(y)| \leq C \sum_{k\geq n} (|\phi_k|   +   |\phi_{-k}|) < +\infty
	\]
	which implies that the series converges also uniformly in $\R \times \R^+$. Moreover we can extract the first term of the series and see that
	\[
	\left| w(x,y) - \phi_{n} X_{n}(y) e^{i n x}  - \phi_{- n} X_{-n}(y)e^{-i n x} \right| \leq C\sum_{k \geq n + 1}  (|\phi_k|   +   |\phi_{-k}|)e^{-ky} \leq C e^{-(n + 1 ) y}
	\]
	(see Corollary \ref{coro:decad}). This, in turn, implies that
	\begin{equation}\label{eq:zeriw}
	w(x,y) = \phi_{n} X_{n}(y) e^{i n x}  + \phi_{- n} X_{- n}(y)e^{-i n x} + O(e^{-(n + 1) y})
	\end{equation}
	uniformly in $x \in \R$. 
	
	We claim that the nodal lines of the functions $w$ (and of $v$) align asymptotically with those of the function
	\[
	\begin{split}
	(x,y) \mapsto A_{n}(x,y) &= \phi_{n} X_{n} (y) e^{i n x}  + \phi_{-n} X_{- n}(y)e^{-i n x}\\
	&= \phi_{n} C_n e^{-(n - i \alpha) y + i n x} + \phi_{-n} C_{-n} e^{(-n-i \alpha) y - i n x} + O(e^{-(n+2)y}) \\&=e^{-n y} \left( a_{n} \cos(\alpha y + nx) + b_{n} \sin(\alpha y + n x)  + O(e^{-2y})\right)\\
	&=e^{-ny} \left(\sqrt{a_n^2+b_n^2} \sin\left(\alpha y + nx - \beta \right) + O(e^{-2y})\right)
	\end{split}
	\]
	where the coefficients $a_n, b_n$ and $\beta$ are real numbers, $a_n^2+ b_n^2\neq0$ by assumption, and $\sin \beta = - a_n/\sqrt{a_n^2 + b_n^2}$. 
	Indeed, recalling \eqref{eq:zeriw} we have that, as $y\to+\infty$,
	\[
	e^{ny} w(x,y) =  \sqrt{a_n^2+b_n^2} \sin(\alpha y + n x -\beta) + O(e^{- y}).
	\]
	Analogously, one can show that also the series of the derivatives converges uniformly in $x\in \R$, and that, as $y\to+\infty$,
	\[
	e^{ny} w_x(x,y) =  n\sqrt{a_n^2+b_n^2} \cos(\alpha y + n x -\beta) + O(e^{- y}).
	\]
	By the implicit function theorem, there exists $y^* > 0$ sufficiently large such that the nodal set of the function $w$ in $\R \times (y^*, +\infty)$ is a countable union of graphs with respect to the $y$ variable, each one asymptotic to
	\[
	\alpha y + n x  = \beta + h \pi \qquad \text{for some $h \in \Z$}. 
	\]
	We choose $\Gamma_j$, $j=1,\dots,2n$, as $2n$ consecutive curves in this family of graphs, by taking $h=j$.
\end{proof}
\begin{remark}\label{rem:onlyif}
If the number of nodal zones for $y$ small is different from $2n$, then the nodal lines of $v$ 
must intersect. As a consequence, condition b) in Section \ref{sec:disk2halfplane} fails for 
such a $v$, which can not correspond to any element of $\Scal_\rot$ via Proposition \ref{prop:back2disk}. 
\end{remark}

\section{Nodal sets in the half-plane}\label{sec:nodalset}

In this section we study in detail the nodal structure of the function $v$ constructed in Proposition \ref{prop:existence}. For this purpose, we let
\[
    \mathcal{N} = \{(x,y) \in \R\times\R_+ : v(x,y) = 0\}
\]
be the nodal set of $v$, and we call a \emph{nodal component} of $v$ any connected component of 
$\R\times\R^+\setminus\Ncal$. 

We state the main result of this section. Its assumptions should be compared to those of Proposition \ref{prop:existence}, in particular we point out that they imply the existence of a unique solution $v$ of \eqref{eq:linear_in_the_halfplane}. We recall that for $\Phi \in
\mathrm{Lip}([0,2\pi])$ we denote the Fourier coefficients of $e^{-\alpha x}\Phi(x)$ as
\[
\phi_k = \frac{1}{2\pi}\int_0^{2\pi} e^{-(ik+\alpha) x}\Phi(x)\,dx, \qquad k\in\Z.
\]
\begin{proposition}\label{prop:nod}
Let $\alpha,\mu,\omega$ be fixed real numbers, $\Phi \in \mathrm{Lip}([0,2\pi])$ and $n \geq 1$ be a given integer. Let us assume that
\begin{itemize}
	\item the function $\Phi$ changes sign $2n$ times in $[0,2\pi]$, more precisely there exist $x_1 = 0 < x_2 < \dots < x_{2n+1} = 2\pi$ such that
	\[
	\begin{split}
		{\{x \in (0,2\pi) : \Phi(x) > 0\}} &= \bigcup_{k=0}^{n-1} (x_{2k+1}, x_{2k+2}) \quad \text{and} 
		\\ 
		{\{x \in (0,2\pi) : \Phi(x) < 0\}} &= \bigcup_{k=0}^{n-1} (x_{2k+2}, x_{2k+3});
	\end{split}
	\]
	\item the coefficients of the equation verify $\mu < \pi^2$;
	\item we have the following compatibility condition
	\begin{equation}\label{eq:compcond}
		\sup\{ |k| : \phi_k = 0\}  = n-1\geq 0.
	\end{equation}
\end{itemize}
Moreover, let $v$ denote the solution of \eqref{eq:linear_in_the_halfplane_w}, whose 
existence is guaranteed by Proposition \ref{prop:existence}.

Then there exist $2n$ connected, open sets 
$\cc_1, \dots, \cc_{2n} \subset \R \times \R^+$ such that:
\begin{itemize}
    \item extending the definition of $\cc_k$, by periodicity, 
    as $\cc_{k+2n} = \cc_{k} +(2\pi,0)$, $k\in\Z$, we have
    \[
    \begin{split}
    \cc_k\cap\cc_h&=\emptyset \qquad\text{for every }k\neq h, \text{ and}\\
	\overline{\cc}_k\cap\overline{\cc}_h&\neq\emptyset \qquad\iff\qquad k-h=-1,0,1;
    \end{split}
	\]
    \item any nodal component of $v$ is one of the $\cc_k$:
    \[
        \R\times\R^+\setminus \mathcal{N} = \bigcup_{k\in\Z} \cc_k;
    \]
    \item each of them  touches the $x$-axis in a single (connected) interval: 
    \[
       \overline{\cc}_k \cap \{(x,0)\} = [x_k,x_{k+1}] \qquad \text{for any $k = 1, \dots 2n$}; 
    \]
    \item they are asymptotic to a family of evenly spaced strips: there exists $\beta \in \R$
    such that
    \[
	\cc_{k} \subset \{(x,y) : \beta + \pi k + o_y (1) < \alpha y + n x < \beta + \pi (k+1) + o_y (1)\}
	\text{ as }y\to+\infty.
	\]
\end{itemize}
\end{proposition}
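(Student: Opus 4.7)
The plan is to combine the far-field description of the nodal set furnished by Lemma \ref{lem H1 and nodal} with the prescribed structure of the trace $\Phi$ at $y=0$, and to match these two via exactly $2n$ globally defined analytic arcs; the assumption $\mu<\pi^2$ enters to rule out any extra nodal structure in between.

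First I would invoke Lemma \ref{lem H1 and nodal}: the compatibility condition \eqref{eq:compcond} identifies $n$ as the first nonvanishing Fourier index, so there exist $y^*>0$ and $2n$ disjoint smooth curves $\Gamma_1,\dots,\Gamma_{2n}$ per period such that the nodal set of $v$ in $\R\times(y^*,+\infty)$ is the periodic translate of these curves, asymptotic to the parallel lines $\alpha y + nx = \beta + \pi j$. This immediately provides the candidate ``tops'' of the $\cc_k$'s and yields the last bullet in the statement. Complementarily, the assumption on $\Phi$ places exactly $2n$ simple sign changes $x_1<\dots<x_{2n}$ in one period at $y=0$, matching the endpoints in the statement. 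By the analyticity of $v$ in $\R\times\R^+$ (Proposition \ref{prop:existence}), the nodal set $\Ncal$ is a locally finite union of analytic arcs meeting at isolated critical points only; periodicity in $x$ together with Lemma \ref{lem H1 and nodal} confines such critical points to the compact region $[0,2\pi]\times[0,y^*]$, so they are finite in number per period.

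The core step is to exclude bounded nodal components, that is, connected components $D$ of $\R\times\R^+\setminus\Ncal$ with compact closure in the open half-plane. If such a $D$ existed, $v$ would have constant sign on $D$ with $v=0$ on $\partial D$; testing \eqref{eq:linear_in_the_halfplane} against $v$, the antisymmetric drift $\omega e^{-2y} v_x$ integrates to a boundary contribution that vanishes, since $v=0$ on $\partial D$ and $e^{-2y}$ is $x$-independent. One is then left with the identity
\[
\int_D |\nabla v|^2 = \mu \int_D e^{-2y} v^2,
\]
which combined with $\mu<\pi^2$ and a Poincar\'e-type estimate on $D$ (exploiting the $x$-periodicity of $w=e^{-\alpha x}v$ and the narrow-strip asymptotic geometry inherited from Lemma \ref{lem H1 and nodal}) should force $v\equiv 0$ on $D$, contradicting analyticity.

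Once bounded components are excluded, a Courant-type counting argument, together with the analytic structure of $\Ncal$, forces each $\Gamma_j$ to extend uniquely downwards as an analytic arc reaching exactly one of the $x_k$'s, without encountering critical points or merging with other arcs. The resulting $2n$ disjoint arcs partition a period of $\R\times\R^+$ into exactly $2n$ connected regions which, after the natural cyclic relabelling, are the $\cc_1,\dots,\cc_{2n}$ of the statement, satisfying the required intersection pattern, the boundary-interval property, and the asymptotic strip containment. The principal obstacle I anticipate is the implementation of the previous paragraph: transforming the energy identity on $D$ into a strict contradiction requires a precise geometric bound on the location and width of any putative bounded component, and it is exactly at this point that the threshold $\mu<\pi^2$ must be invoked.
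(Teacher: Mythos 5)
Your outline captures the right skeleton --- far-field structure from Lemma~\ref{lem H1 and nodal}, prescribed trace at $y=0$, and an energy/Poincar\'{e} argument activated by $\mu<\pi^2$ --- and it matches the spirit of the paper's proof. However, the paragraph you flag as the ``principal obstacle'' is precisely where the argument lives, and as written it has genuine gaps that the paper resolves with a more delicate cascade of lemmas.

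First, the restriction to \emph{bounded} nodal components is too weak. A nodal component $A$ on which $v$ vanishes on the whole boundary (so that the trivial extension lies in $H^1_0$) need not be bounded: it may be confined to $\{y<y^*\}$ yet extend indefinitely in $x$, and it may even be $2\pi$-periodic. The correct dichotomy is the one in Lemma~\ref{lem:moltocorto}: either $A$ is $2\pi$-periodic, or its horizontal sections have measure at most $2\pi$. Your energy identity plus a Poincar\'{e} bound cannot touch the periodic case directly, because there the width is infinite and no useful Poincar\'{e} constant is available. The paper excludes periodic components by an entirely separate topological separation argument (Lemma~\ref{lem:tuttemenodipi}): a periodic curve $\gamma$ inside such a component would disconnect $\R\times\R^+$, and one then uses the previously established fact that the far-field components $\cc_k$ join the line $y=y^*$ to the $x$-axis (Lemma~\ref{lem:inftyconnessecon0}) to produce a crossing --- a contradiction. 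This is not a corollary of ``excluding bounded components''; it is a new mechanism, and it in turn \emph{presupposes} a first round of the energy argument applied to the far-field components.

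Second, the Poincar\'{e} estimate itself needs to be made precise, because the threshold $\mu<\pi^2$ is not generic --- it is exactly $\pi^2=\bigl(j_{1/2,1}\bigr)^2$. The paper obtains the identity $\int_A|\nabla v|^2=\mu\int_A e^{-2y}v^2$ as you do, but then performs a Steiner symmetrization in the $x$-direction (valid because the weight $e^{-2y}$ is $x$-independent) to land in a semi-infinite vertical strip of width $2\pi$, where the first eigenvalue of the weighted Dirichlet problem is computed explicitly via Bessel functions (Lemmas~\ref{lem coer} and~\ref{lem eigen rect}). The vague ``narrow-strip asymptotic geometry inherited from Lemma~\ref{lem H1 and nodal}'' does not deliver a uniform width bound near $y=0$, which is where a hypothetical bad component would live; you must first prove the $2\pi$ width bound (Lemma~\ref{lem:moltocorto} for the non-periodic case, Lemma~\ref{lem:tuttemenodipi} for the periodic one) and establish that $v$ restricted to \emph{any} component is $H^1$ (Lemmas~\ref{lem:infty in strip} and~\ref{lem:vH1davvero}). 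Finally, the concluding step (``each $\Gamma_j$ extends uniquely downwards'') does not follow merely from the absence of bounded components; the paper closes via a counting/ordering argument comparing the $2n$ intervals on $y=0$ with the $2n$ far-field strips, using the already proven fact that every $\cc_k$ touches the $x$-axis in a single interval and that the order is preserved (Lemma~\ref{lem:ordinate}). You should make that bookkeeping explicit.
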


The remaining part of this section is devoted to the proof of Proposition \ref{prop:nod}. We shall prove it in a series of intermediate steps. First we briefly investigate the local structure of the nodal set $\mathcal{N}$.
\begin{lemma}\label{lem:analiticity}
Under the above notation:
\begin{itemize}
\item $\mathcal{C} = \{(x,y) \in \R\times\R_+  : v(x,y) = 0, \nabla v (x,y) = 0\}
$ is discrete in $\R\times\R^+$;
\item $\mathcal{N} \setminus \mathcal{C}$ is the union of countably many analytic curves;
\item if $\Phi(\bar x)\neq 0$ and $l \in \R$, then the set
\[
\mathcal{N} \cap \{ (x,y)  : x +ly = \bar x \}
\]
is discrete, and it does not accumulate at $\{y=0\}$.
\end{itemize}
\end{lemma}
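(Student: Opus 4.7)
The plan is to exploit the real analyticity of $v$ in the open half-plane $\R\times\R^+$ guaranteed by item~(3) of Proposition~\ref{prop:existence}, together with the fact that $v\not\equiv 0$ (since the boundary datum $\Phi$ changes sign in $[0,2\pi]$ by hypothesis, so in particular $v$ is not identically zero on $\{y=0\}$).

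For the first two assertions I would invoke the classical local structure theorem for nodal sets of solutions of second order linear elliptic equations with real analytic coefficients. The first step is to rule out infinite-order vanishing: the equation \eqref{eq:linear_in_the_halfplane} is elliptic with analytic coefficients on the open half-plane, so by strong unique continuation, infinite-order vanishing at any interior point would force $v\equiv 0$, a contradiction. Hence at every $(x_0,y_0)\in\Ncal$ the function $v$ vanishes to some finite order $N\ge 1$. If $N=1$ then $\nabla v(x_0,y_0)\ne 0$ and the implicit function theorem produces a local analytic curve through the point. If $N\ge 2$, so that $(x_0,y_0)\in\Ccal$, I would inspect the leading homogeneous Taylor polynomial $P_N$ of $v$ at $(x_0,y_0)$: since the principal part of the operator is $-\Delta$, $P_N$ is a non-trivial harmonic homogeneous polynomial of degree $N$, whose zero set consists of $2N$ rays meeting only at the origin. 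A standard Hartman--Wintner type argument then shows that $\Ncal$ is locally the union of $2N$ analytic arcs emanating from $(x_0,y_0)$, so $\Ccal$ is isolated at that point. A countable exhaustion of $\R\times\R^+$ by compacta yields the discreteness of $\Ccal$ and writes $\Ncal\setminus\Ccal$ as a countable union of analytic curves.

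For the third item, fix $\bar x$ with $\Phi(\bar x)\ne 0$, $l\in\R$, and set $L=\{(x,y)\in\R\times\R^+:x+ly=\bar x\}$. I would study the one-variable function $g(y):=v(\bar x-ly,y)$ on $(0,+\infty)$: it is real analytic there by analyticity of $v$, and by the H\"older continuity of $v$ up to $\{y=0\}$ (again item~(3) of Proposition~\ref{prop:existence}) one has $\lim_{y\to 0^+}g(y)=v(\bar x,0)=\Phi(\bar x)\ne 0$. Consequently $g$ is nonzero in a right-neighbourhood of $0$, which already excludes accumulation of zeros of $g$ at $y=0$; in particular $g\not\equiv 0$, so the identity principle for real analytic functions forces the zero set of $g$ in $(0,+\infty)$ to be discrete. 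Via the parametrization $y\mapsto(\bar x-ly,y)$ this coincides with $\Ncal\cap L$, proving the claim.

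The only technically delicate point is the control of $\Ncal$ near critical zeros in items~(1)--(2); this rests on strong unique continuation (to forbid infinite-order vanishing of $v$) and the identification of the blow-up profile at a singular zero with a harmonic homogeneous polynomial. Both ingredients are classical for elliptic operators with real analytic coefficients, so beyond careful bookkeeping I do not anticipate any substantive obstacle.
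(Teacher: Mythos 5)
Your proof follows essentially the same route as the paper: analyticity of $v$ in the open half-plane, the Hartman--Wintner structure theorem to make $\mathcal{C}$ discrete and $\mathcal{N}\setminus\mathcal{C}$ a countable union of analytic arcs, and, for the third item, restriction to the line via $g(y)=v(\bar x-ly,y)$, using analyticity plus continuity up to the boundary and $g(0)=\Phi(\bar x)\ne0$. The only cosmetic difference is that you unpack the Hartman--Wintner argument (ruling out infinite-order vanishing and identifying the leading Taylor polynomial as a harmonic homogeneous one), whereas the paper simply cites \cite{MR58082}; for a function that is already known to be real analytic, infinite-order vanishing is excluded directly by the identity principle, so the appeal to strong unique continuation is unnecessary but harmless.
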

We point out that, for the moment, it may still be that $\mathcal{C}$ accumulates at some point of the discrete set $\{(x,0):\Phi(x)=0\}$. 
\begin{proof}
We recall that $v$ satisfies \eqref{eq:linear_in_the_halfplane}, $v$ is
analytic in $\R\times\R^+$ and continuous up to the boundary $\{(x,y):y=0\}$ (see Proposition \ref{prop:existence}). By well known results of Hartman and Wintner \cite{MR58082}, 
the set $\mathcal{C}$ is discrete in 
$\R\times\R^+ $. 

As a consequence, by the analytic implicit function theorem,
$\mathcal{N} \setminus \mathcal{C}$ is the disjoint union of countably many analytic
curves which are either unbounded, accumulate at some point of $\{(x,0):\Phi(x)=0\}$,
or meet each other at points of $\mathcal{C}$. 

Finally, let $\varphi:[0,+\infty)\to\R$ be
defined as 
\[
\varphi(y) = v(\bar x - ly,y).
\] 
Then $\varphi$ is real analytic for $y>0$, continuous up to $y=0$ and $\varphi(0)\neq0$. We deduce that its zero set is discrete. Since
\[
\mathcal{N} \cap \{ (x,y)  : x +ly = \bar x \} \equiv 
\{ (\bar x - ly,y)  : \varphi(y) = 0 \}.
\]
the lemma follows.
\end{proof}
Let $A$ be any nodal component of $v$. In the following, for any $h\in\Z$, we write
\[
A_h = A - (2h\pi,0).
\]
Since $v$ is $2\pi$-periodic in $x$, $A_h$ is itself a nodal component of $v$. As a consequence,
either $A$ and $A_h$ coincide, or they are disjoint. We prove that this property is independent 
of $h\neq0$.
\begin{lemma}\label{lem:moltocorto}
Let $A$ be any nodal component of $v$. Then
\begin{itemize}
\item either $A\equiv A_h$ for some $h\in\Z$, in which case $A\equiv A_k$ for every $k\in\Z$,
\item or $A\cap A_h=\emptyset$ for some $h\in\Z$, in which case $A\cap A_k=\emptyset$ for every $k\neq0$, and 
\[
\sup_{y>0} |\{ x : (x,y) \in A\}| \leq 2\pi.
\]
\end{itemize}
\end{lemma}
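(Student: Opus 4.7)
The plan is to analyze the subgroup $H=\{h\in\Z:A_h=A\}\subset\Z$. Since $v(x+2\pi,y)=e^{2\pi\alpha}v(x,y)$ with $e^{2\pi\alpha}>0$, every $A_h$ is a nodal component of $v$ of the same sign as $A$, so the alternative $A_h=A$ or $A_h\cap A=\emptyset$ is granted; hence $H=d\Z$ for some $d\ge 0$, and the lemma reduces to the claim $d\in\{0,1\}$, with $d=1$ corresponding to the first alternative ($A=A_k$ for every $k$) and $d=0$ to the second.

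The width bound in the case $d=0$ is immediate: two distinct points of $A\cap\{y=y_0\}$ differing by $(2k\pi,0)$ with $k\neq 0$ would lie in $A\cap A_{-k}=\emptyset$, so the canonical projection $x\mapsto x\bmod 2\pi$ is injective on the slice, whence $|\{x:(x,y_0)\in A\}|\le 2\pi$ follows from Lebesgue measure comparison with $\R/2\pi\Z$.

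The main obstacle is ruling out $d\ge 2$. Assuming such a $d$, the components $A,A_{-1},\dots,A_{-(d-1)}$ are $d$ pairwise disjoint, same-sign, cyclically $(2\pi,0)$-translated nodal components of $v$, each unbounded in $x$. By Lemma~\ref{lem H1 and nodal}, for $y>y^*$ the nodal set consists of the $2n$ curves $\Gamma_j$ per period and $A\cap\{y>y^*\}$ is a union of asymptotic strips; by $2\pi$-periodicity of the asymptotic structure the strips of the sign of $A$ lying in one $2\pi$-window distribute among the $d$ translates, so there exist two same-sign strips $\Sigma\subset A$ and $\Sigma'\subset A_{-1}$ separated by a single opposite-sign strip $\Sigma''$ belonging to some opposite-sign nodal component $C$. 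I would then invoke the following geometric dichotomy on $\overline C$: if $\overline C\cap\{y=0\}=\emptyset$ then $C$ is bounded in $y$ from below, and just beneath its tip $v$ has the sign of $A$, producing a continuous path connecting $\Sigma$ and $\Sigma'$ inside a single same-sign nodal component, contradicting $A\cap A_{-1}=\emptyset$; whereas if $\overline C\cap\{y=0\}\neq\emptyset$ then $\overline C$ is a connected continuum in $\overline{\R\times\R^+}$ joining $\{y=0\}$ to $\{y=+\infty\}$, squeezed in $x$ between the two $\Gamma$-asymptotes, and a Jordan-type separation argument shows that the same-sign component containing $\Sigma$ is bounded in the $x$-direction, contradicting $A=A_d$.

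The hardest part will be making the Jordan-type argument precise, that is, verifying that such a connected continuum genuinely partitions the half-plane into two components unbounded in the $\pm\infty$ $x$-directions. This should be manageable using the analytic regularity of $\Ncal$ from Lemma~\ref{lem:analiticity} together with the explicit asymptotic shape of the $\Gamma_j$ from Lemma~\ref{lem H1 and nodal}. The residual case in which $A$ contains no asymptotic strip, so that $A\subset\{y\le y^*\}$ is $2d\pi$-periodic in $x$ but bounded in $y$, is dispatched in the same topological spirit by exploiting the discreteness of the trace $\overline A\cap\{y=y^*\}\subset\Ncal\cap\{y=y^*\}$ and again concluding that the periodic structure is incompatible with $A$ being a single connected component.
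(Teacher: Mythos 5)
Your approach is genuinely different from the paper's, and it leaves a gap at the crucial step. Your reduction to showing $d\in\{0,1\}$ is sound, and the width bound in case $d=0$ is fine (it matches the paper's decomposition of the slice into $2\pi$-windows). The trouble is with ruling out $d\ge 2$: the paper disposes of this in one line by applying the universal chord theorem of Oxtoby to a curve $\gamma\subset A$ joining $(\bar x,\bar y)$ to $(\bar x+2h\pi,\bar y)$, which instantly produces two points on $\gamma$ (hence in $A$) differing by exactly $(2\pi,0)$, so $A\cap A_1\neq\emptyset$ and $A=A_1$. You instead propose a strip-and-Jordan-curve argument, and this is where things go wrong.

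Concretely: the claim that $\Sigma\subset A$ and $\Sigma'\subset A_{-1}$ can be chosen ``separated by a single opposite-sign strip'' is not correct in general. If $\Sigma=\cc_k$ is a far-field strip contained in $A$, then $\cc_{k+2n}=\Sigma+(2\pi,0)\subset A_{-1}$, and between $\cc_k$ and $\cc_{k+2n}$ sit $2n-1$ strips of alternating sign; for $n\ge 2$ there is no single opposite-sign strip to play the role of $\Sigma''$. Even if one patches the choice of $\Sigma''$, the subsequent dichotomy is sketchy: the ``tip'' of an opposite-sign component $C$ may be a singular point of the nodal set where several same-sign wedges meet, so ``just beneath its tip $v$ has the sign of $A$'' does not immediately connect $\Sigma$ to $\Sigma'$ within a single component; and the Jordan-type separation is left as ``should be manageable,'' but it is precisely the content of the proof. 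Finally, the residual case $A\subset\{y\le y^*\}$ is gestured at rather than handled. The missing idea is the universal chord theorem, which bypasses all of this geometry: once you know $A$ is arcwise connected and $2h\pi$-periodic, the chord theorem applied to the $x$-component of a joining curve (using that $2h\pi/2\pi=h\in\N$) gives a translate by $(2\pi,0)$ inside $A$ directly.
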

\begin{proof}
We start by examining the first alternative. Let $(\bar x, \bar y) \in A\equiv A_h$, 
with $h\ge1$, so that also 
$(\bar x + 2h\pi, \bar y)\in A$. By connectedness, there exists a curve $\gamma\subset A$  
joining $(\bar x, \bar y)$ and $(\bar x+2h\pi, \bar y)$. Since $2h\pi/2\pi=h\in\N$, by the universal chord theorem (see e.g.\ \cite{MR299735}), there exists $(x_1,y_1),(x_2,y_2)\in\gamma$ such that $(x_2,y_2)=(x_1,y_1)+(2\pi,0)$. Thus $A\cap A_{1}\ni(x_2,y_2)$, which implies $A\equiv A_k$ for every $k\in\Z$.

Conversely, let us assume that $A\cap A_k=\emptyset$ for every $k\neq0$. Then, for every $y>0$, 
\[
\{ x : (x,y) \in A\} = \bigcup_{k\in\Z} \{ x\in[2k\pi,2(k+1)\pi) : (x,y) \in A\}= 
\bigcup_{k\in\Z} \{ x\in[0,2\pi) : (x,y) \in A_k\},
\]
and such union is disjoint by assumption. We deduce that $|\{ x : (x,y) \in A\}| \leq 
|[0,2\pi)|$.
\end{proof}
To proceed, we need the following result, which is a consequence of a Poincar\'e-type inequality
(see Lemma \ref{lem eigen rect}).
\begin{lemma}\label{lem cc and eigen}
	Let $A$ be any nodal component of $v$ and assume that $\left.v\right|_{A} \in H^1_0 (A)$ and 
	\[
		\sup_{y>0} |\{ x : (x,y) \in A\}| \leq 2 \pi.
	\]
	Then necessarily $\mu \geq \pi^2$.
\end{lemma}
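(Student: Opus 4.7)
The plan is to turn the membership $\left.v\right|_A\in H^1_0(A)$ into a weighted Rayleigh identity on $A$, and then compare the resulting quotient with the first Dirichlet eigenvalue of a weighted Laplacian on a strip of width $2\pi$, which will turn out to be exactly $\pi^2$. I would first test the weak form of \eqref{eq:linear_in_the_halfplane} against $\left.v\right|_A$ extended by zero, using that $v$ is smooth on the open set $A$ and solves the equation classically there. This gives
\[
\int_A |\nabla v|^2 + \omega \int_A e^{-2y}\, v\, v_x = \mu \int_A e^{-2y}\, v^2.
\]
The drift term vanishes: writing $v v_x = \tfrac{1}{2}(v^2)_x$ and noting that $e^{-2y}$ is $x$-independent, I would use that $\left.v\right|_A\in H^1_0(A)$ forces $v(\cdot,y)\in H^1_0(A_y)$ for a.e.\ $y$, so that $\int_{A_y}(v^2)_x\,dx=0$ and a Fubini application kills the whole $\omega$-integral. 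What remains is the identity
\[
\mu \;=\; \frac{\int_A|\nabla v|^2}{\int_A e^{-2y}v^2},
\]
so $\mu$ bounds from below $\lambda_1(A)$, the first Dirichlet eigenvalue of $-\Delta V=\lambda\,e^{-2y}V$ on $A$.

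Next I would apply Steiner symmetrization in the $x$-direction: for each $y$, replace the slice $A_y$ by the centered interval $A_y^\star=(-|A_y|/2,|A_y|/2)$ and let $v^\star\in H^1_0(A^\star)$ be the corresponding slicewise symmetric decreasing rearrangement. Because the weight $e^{-2y}$ depends only on $y$, the rearrangement preserves $\int_A e^{-2y}v^2$ slicewise; the standard P\'olya--Szeg\H{o} inequality for Steiner symmetrization gives $\int_{A^\star}|\nabla v^\star|^2\le \int_A|\nabla v|^2$, so the weighted Rayleigh quotient can only decrease. This yields $\lambda_1(A)\ge \lambda_1(A^\star)$. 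The hypothesis $\sup_y|A_y|\le 2\pi$ forces $A^\star\subset R:=(-\pi,\pi)\times\R^+$, and monotonicity with respect to domain inclusion gives $\lambda_1(A^\star)\ge \lambda_1(R)$.

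The last step is to compute $\lambda_1(R)$, which is precisely the content of the cited Lemma \ref{lem eigen rect}. Separating variables $V(x,y)=X(x)Y(y)$, the $x$-equation $X''+\nu^2 X=0$ with $X(\pm\pi)=0$ gives $\nu\in\{n/2:n\ge 1\}$; the $y$-equation $-Y''+\nu^2 Y=\lambda e^{-2y}Y$, $Y(0)=0$, $Y\in H^1(\R^+)$, becomes, after the change of variable $z=e^{-y}$, Bessel's equation of order $\nu$. Integrability as $y\to+\infty$ selects the branch $Y(y)=J_\nu(\sqrt{\lambda}\,e^{-y})$, and the Dirichlet condition at $y=0$ imposes $\sqrt{\lambda}=j_{\nu,1}$. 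Monotonicity of $\nu\mapsto j_{\nu,1}$ in $\nu\ge 0$ makes the minimum occur at $\nu=1/2$, where the explicit identity $J_{1/2}(z)=\sqrt{2/(\pi z)}\sin z$ gives first zero $\pi$. Hence $\lambda_1(R)=\pi^2$, and combining with the previous step yields $\mu\ge \pi^2$.

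The main obstacle is the Steiner symmetrization step: the crucial feature is the $x$-independence of the weight $e^{-2y}$, which makes rearrangement in $x$ compatible with the weighted Rayleigh quotient; without it one would have to handle the weight after symmetrization. A naive slicewise 1D Poincar\'e inequality on $A_y$ only gives $\mu\ge 1/4$ from $|A_y|\le 2\pi$, so the jump to the sharp constant $\pi^2$ really comes from exchanging the horizontal Dirichlet energy for the one-dimensional Bessel analysis in the $y$-direction encoded in Lemma \ref{lem eigen rect}.
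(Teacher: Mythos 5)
Your argument is correct and is essentially identical to the paper's: multiply the equation by $v$ and integrate over $A$ to obtain the weighted Rayleigh identity (the $\omega$-drift dropping out because the weight is $x$-independent and $v|_A \in H^1_0(A)$), then Steiner-symmetrize in $x$ to reduce to the semi-infinite strip $(-\pi,\pi)\times\R^+$, and finally invoke the Bessel eigenvalue computation for that strip (the paper's Lemma \ref{lem eigen rect}) to read off $\mu\ge (j_{1/2,1})^2 = \pi^2$. Your sketch also reproduces the proof of Lemma \ref{lem eigen rect} inline via separation of variables, which the paper keeps in the appendix, but the overall route is the same.
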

\begin{proof}
	By assumption the function $v \in H^1(A)$ verifies
	\[
	\begin{cases}
	-\Delta v + \omega e^{-2y} v_x = e^{-2y}\mu v & \text{in $A$}\\
	v = 0              & \text{on $\partial A$}.
	\end{cases}
	\]
	Multiplying by $v$ and integrating by parts over $A$ yields the identity
	\[
	\int_{A} |\nabla v|^ 2 = \mu \int_A e^{-2y}v^2,
	\]
	indeed 
	\[
	\frac{\omega}{2}\int_A e^{-2y}(v^2)_x = 0
	\]
	for every $v\in H^1_0(A)$, by density of the test functions. 
	
	We argue by Steiner symmetrization with respect to the $y$-axis, see e.g.\
	\cite{Kawohl}. We stress that the weight $(x,y) \mapsto e^{-2y}$ is independent of the $x$ variable. Let $A^* \subset (-\pi,\pi) \times \R^+$ be defined as
	\[
	A^* := \left\{(x,y): y>0,\ |x| < |\{ x : (x,y) \in A\}|/2 \right\}.
	\]
	and  $v^* \in H^1_0(A^*) \subset H^1_0((-\pi,\pi)\times \R^+)$ be the Steiner symmetrization of the function $v|_A$. By well-known properties of the Steiner symmetrization, see, we obtain
	\[
	\int_{(-\pi,\pi)\times \R^+} |\nabla v^*|^ 2 \leq \mu \int_{(-\pi,\pi)\times \R^+}
	e^{-2y}(v^*)^2.
	\]
	Since $v$ and $v^*$ are not identically zero, by Lemma \ref{lem eigen rect} we obtain
	\[
	\mu \ge  \left(j_{\frac{1}{2},1}\right)^2 =  \pi^2.\qedhere
	\]
\end{proof}
\begin{lemma}\label{lem:infty<2pi}
Let $y^*$ be defined as in Lemma \ref{lem H1 and nodal} and let $A$ denote any nodal 
component of $v$ such that $A \cap \{(x,y):y>y^*\}\neq \emptyset$. Then
\[
\sup_{y>0} |\{ x : (x,y) \in A\}| \le 2 \pi.
\]
\end{lemma}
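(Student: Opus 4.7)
The plan is to combine Lemma~\ref{lem:moltocorto} with a Poincar\'e--eigenvalue argument in the spirit of Lemma~\ref{lem cc and eigen}. By Lemma~\ref{lem:moltocorto}, the component $A$ either satisfies $\sup_y|\{x:(x,y)\in A\}|\le 2\pi$ (which is the desired conclusion), or it coincides with every one of its horizontal translates $A_k$; the aim is then to exclude the second alternative under the hypothesis $A\cap\{y>y^*\}\ne\emptyset$. Translation invariance of $A$ amounts to its image on the quotient cylinder $\mathbb S^1\times\R^+$ being a nodal region of the periodic function $w$ that wraps non-trivially around the cylinder, and since by Lemma~\ref{lem H1 and nodal} the nodal set above $y^*$ consists of $2n$ disjoint non-wrapping strips per period, any such wrapping loop must lie entirely in $\{y\le y^*\}$.

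To derive a contradiction in this scenario, I would first invoke the compatibility condition~\eqref{eq:compcond}, which in particular guarantees $\phi_0=0$, so that the Fourier expansion~\eqref{eq.v} of $w$ carries no zeroth mode and $\int_0^{2\pi}w(x,y)\,dx=0$ for every $y>0$. Combined with the Sturm--Hurwitz theorem---applicable because the expansion of $w(\cdot,y)$ starts at mode $n\ge 1$---this yields at least $2n\ge 2$ sign changes of $w(\cdot,y)$ on each horizontal circle, so no slice of the projection of $A$ exhausts $\mathbb S^1$ and every slice has length strictly less than $2\pi$. A Steiner symmetrization in the $x$-direction therefore embeds the projected region into $(-\pi,\pi)\times\R^+$. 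Paralleling the proof of Lemma~\ref{lem cc and eigen}---multiplying the PDE by $w$, integrating, and noting that the first-order drift term $(\omega e^{-2y}-2\alpha)w\,w_x$ integrates to zero on each slice thanks to the nodal vanishing of $w$ at the endpoints of the arcs---one reaches a weighted Rayleigh inequality, and invoking Lemma~\ref{lem eigen rect} on $(-\pi,\pi)\times\R^+$ forces $\mu\ge\pi^2$, contradicting the standing hypothesis $\mu<\pi^2$.

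The main obstacle, which I expect to require the most care, is the subcase where the projection of $A$ extends all the way down to $\{y=0\}$: there $w=e^{-\alpha x}\Phi$ is not identically zero, so $w|_A$ does not lie in $H^1_0$ of its support and an additional bottom-boundary term enters the integration by parts. My plan is to dispose of this term either by truncating at a small height $\varepsilon>0$ and passing to the limit $\varepsilon\to 0$ (using the exponential decay and $H^1$-control of $w$), or by a direct analysis of the translation-invariant portion of $\overline A\cap\{y=0\}$---which must consist of sign-constant arcs of $\Phi$---in order to absorb the bottom contribution into the weighted Poincar\'e estimate.
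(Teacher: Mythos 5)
Your plan correctly reduces the statement to excluding the alternative in Lemma~\ref{lem:moltocorto} where $A$ is $2\pi$-periodic, and correctly identifies that some Poincar\'e--eigenvalue argument in the spirit of Lemma~\ref{lem cc and eigen} should produce a contradiction. However, the way you try to implement that argument has two genuine gaps, and it misses the key structural idea in the paper's proof.

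First, you apply the spectral estimate directly to the (quotient image of the) periodic component $A$ itself, but this cannot work as proposed. On the half-plane, the hypothesis of Lemma~\ref{lem cc and eigen} is $\sup_{y}|\{x:(x,y)\in A\}|\le 2\pi$ with $|\cdot|$ the Lebesgue measure on $\R$; if $A$ is $2\pi$-periodic this supremum is $+\infty$, so that lemma simply does not apply. Moving to the cylinder and replacing $v$ by $w=e^{-\alpha x}v$ avoids the measure issue, but then the PDE picks up the zeroth-order term $-\alpha^2 w$ (see \eqref{eq:linear_in_the_halfplane_w}); after multiplying by $w$ and integrating, the resulting identity becomes $\int|\nabla w|^2=\int[(\mu-\alpha\omega)e^{-2y}+\alpha^2]w^2$, and the favourable-sign constant term $\alpha^2\int w^2$ spoils the Rayleigh comparison with Lemma~\ref{lem eigen rect}. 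Your sketch silently drops this term. Second, you correctly flag the bottom-boundary issue (the restriction of the solution to $A$ need not lie in $H^1_0$, since $\Phi\neq 0$ on parts of $\{y=0\}$), but the two repair strategies you mention are speculative: the truncation at $y=\varepsilon$ leaves a boundary integral $\int_{y=\varepsilon} v\,\partial_y v$ that does not tend to $0$, and it is not explained how the sign-constant arcs of $\Phi$ can be ``absorbed'' into a Poincar\'e inequality for $H^1_0$.

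The paper's proof avoids both problems by not working with $A$ at all. It uses the periodicity of $A$ together with Lemma~\ref{lem H1 and nodal} to build a simple curve $\gamma\subset A$ consisting of the two half-lines $\ell$, $\ell+(2\pi,0)$ and a compact arc in $\{y\le y^*\}$ joining them. This curve separates $\R\times\R^+$ into two regions, and the bounded-below one, $O_1$, lies strictly above some $y_0>0$ and inside a finite oblique strip. Any nodal component $B\subset O_1$ other than $A$ (such a $B$ exists because $v$ changes sign in $O_1$) then automatically satisfies $\sup_y|\{x:(x,y)\in B\}|\le 2\pi$ (it cannot be periodic), lies away from $\{y=0\}$ so that $v|_B\in H^1_0(B)$, and the spectral estimate of Lemma~\ref{lem cc and eigen} applies to $B$---not to $A$---to force $\mu\ge\pi^2$. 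This is the piece missing from your argument; without it, neither the unbounded slices nor the non-zero boundary data of $A$ can be handled by the tools you invoke.
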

\begin{proof}
Without loss of generality we can assume that $v>0$ in $A$ and, by Lemma \ref{lem H1 and nodal}, 
there exists a half-line $\ell:=\{(x,y):y\ge y^*,\ \alpha y + n x = q \}$ such that 
$\ell \subset A$.
Let us assume by contradiction that $\sup_{y>0} |\{ x :
(x,y) \in A\}| > 2 \pi$. By Lemma \ref{lem:moltocorto} we deduce that $A$ is $2\pi$-periodic in the $x$-direction, so that also $\ell+(2\pi,0)\subset A$. By connectedness, we
can find a simple curve $\gamma$ such that
\[
\gamma\subset A,\quad \gamma\cap\{(x,y):y\ge y^*\} = \ell\cup\ell+(2\pi,0)
\quad \text{and} \quad
\gamma\cap\{(x,y):y\le y^*\} \text{ is compact.}
\]
As a consequence,
$\R\times\R^+\setminus\gamma = O_0\cup O_1$,  where each $O_i$ is open and connected and only one of them, say $O_1$, is such that 
\[
O_1 \supset \{(x,y^*) : x^* < x < x^*+2\pi \}\neq \emptyset,\qquad\text{where }\alpha y^* + 
n x^* = q.
\]
Since $\gamma \cap \{y\le y^*\}$ is compact, we deduce that there exist
$q_1,q_2$ and $y_0>0$ such that
\begin{equation}\label{eq:O1}
O_1  \subset \{(x,y):y\ge y_0,\ q_1 < \alpha y + n x < q_2 \}.
\end{equation}
Now, let $B\neq A$ be any other nodal component of $v$ satisfying
$B\subset O_1$ ($B$ exists as $v$ changes sign in $O_1$, by Lemma \ref{lem H1 and nodal}). Then $B$ can not be periodic in the
$x$-direction and hence, by Lemma \ref{lem:moltocorto}, $\sup_{y>0} |\{ x : (x,y) \in B\}|
\le 2 \pi$. By Proposition \ref{prop:existence} and  \eqref{eq:O1} we have that $\left.v\right|_{B}
\in H^1_0(B)$. Thus Lemma \ref{lem cc and eigen} applies, providing a contradiction since we are assuming $\mu<\pi^2$.
\end{proof}
In the same spirit, we show the following.
\begin{lemma}\label{lem:pochezoneinfinite}
Let $y^*$ be defined as in Lemma \ref{lem H1 and nodal} and let $A$ denote any nodal 
component of $v$ such that $A \cap \{(x,y):y>y^*\}\neq \emptyset$. Then
$A \cap \{(x,y):y>y^*\}$ is connected.
\end{lemma}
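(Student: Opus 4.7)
My plan is to argue by contradiction: supposing $A\cap\{y>y^*\}$ has at least two distinct connected components $B_1,B_2$, I will produce a nodal component $A'\neq A$ lying in a strip $\{y>y_\sigma\}$ with $y_\sigma>0$ and whose $x$-width is bounded by $2\pi$. Lemma \ref{lem cc and eigen} will then force $\mu\geq\pi^2$, contradicting the standing assumption $\mu<\pi^2$.

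First I will locate such an $A'$ using the structural description above $y^*$. By Lemma \ref{lem H1 and nodal} the nodal set in $\{y>y^*\}$ is a disjoint union of graphs (the translates of $\Gamma_1,\dots,\Gamma_{2n}$), so $A\cap\{y>y^*\}$ is a disjoint union of open vertical strips, each bounded on the sides by two consecutive nodal graphs. Since $v$ has constant sign on $A$ and changes sign across every $\Gamma_j$, two strips of $A$ can never be adjacent; therefore between suitable representatives of $B_1$ and $B_2$ there is at least one strip $S'$ of opposite sign, which is necessarily contained in a nodal component $A'\neq A$.

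The core of the argument is a topological separation step. Using that $A$ is open and path-connected, I will construct a proper simple embedding $\sigma\colon\R\to A$ of the following shape: descend along $B_1$ from $y=+\infty$ to a point near $y=y^*$; then follow a path in $A\cap\{0<y\leq y^*\}$ joining it to a point in $B_2$ near $y=y^*$ (such a path exists because $B_1$ and $B_2$ sit in the same nodal component $A$); finally ascend along $B_2$ back to $y=+\infty$. The compact ``U-dip'' portion of $\sigma$ lies in the open set $\{y>0\}$, hence is bounded below by some $y_\sigma>0$; combined with the two legs this gives $\sigma\subset\{y\geq y_\sigma\}$. The Jordan curve theorem applied to $\sigma\cup\{\infty\}$ in the one-point compactification of $\R^2$ shows that $\R^2\setminus\sigma$ has exactly two components, and a direct check in the upper half-plane yields that $(\R\times\R^+)\setminus\sigma$ splits into two open connected sets: an \emph{interior} $R\subset\{y>y_\sigma\}$, bounded below by the dip and laterally by the two ascending legs, and an \emph{exterior} containing the full horizontal strip $\{0<y<y_\sigma\}$.

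The conclusion then comes quickly. Since $\sigma\subset A$ and $A\cap A'=\emptyset$, the connected set $A'$ is disjoint from $\sigma$ and therefore lies entirely in one of the two Jordan pieces; since $S'\subset A'\cap R$, necessarily $A'\subset R\subset\{y>y_\sigma\}$. In particular $\overline{A'}\cap\{y=0\}=\emptyset$, so the trivial extension of $v|_{A'}$ belongs to $H^1_0(A')$, and because $A'$ meets $\{y>y^*\}$ through $S'$, Lemma \ref{lem:infty<2pi} gives $\sup_{y>0}|\{x:(x,y)\in A'\}|\leq 2\pi$. Applying Lemma \ref{lem cc and eigen} to $A'$ now forces $\mu\geq\pi^2$, the desired contradiction. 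The hard part will be making the topological step fully rigorous: building $\sigma$ cleanly inside $A$ (e.g.\ by deforming off $\Gamma_{a+1}$ and $\Gamma_b$ into the adjacent strips $B_1,B_2$) and verifying that in the half-plane the interior Jordan region is uniformly bounded away from $\{y=0\}$. Once this is in place the rest is a direct reuse of the eigenvalue mechanism already employed in Lemma \ref{lem:infty<2pi}.
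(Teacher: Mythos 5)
Your proof is correct and follows essentially the same strategy as the paper: argue by contradiction, build a simple curve in $A$ joining two distinct unbounded components of $A\cap\{y>y^*\}$ and dipping compactly through $\{0<y\le y^*\}$, use the resulting Jordan separation of the half-plane to trap a different nodal component $A'$ away from $\{y=0\}$, and invoke Lemma~\ref{lem:infty<2pi} together with Lemma~\ref{lem cc and eigen} to derive $\mu\ge\pi^2$. The paper states this proof very tersely (``one can find a contradiction as above''), and you have simply spelled out the same mechanism in more detail.
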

\begin{proof}
The proof follows the lines of that of Lemma \ref{lem:infty<2pi}.
Assume by contradiction that $A \cap \{(x,y):y>y^*\}$ contains at least two connected components, say $A_1$ and $A_2$. Then, by Lemma \ref{lem H1 and nodal} we can find half-lines $\ell_j:=\{(x,y):y\ge y^*, \alpha y + n x = q_j \} \subset A_j$, and a simple curve $\gamma\subset A$ which joins such
half lines. Then  $\R\times\R^+\setminus\gamma$ is the disjoint union of $O_0$ and $O_1$, and one can find a contradiction as above.
\end{proof}

Motivated by Lemma \ref{lem:pochezoneinfinite} we introduce the following notation.
\begin{definition}\label{def:cc}
Let $y^*>0$ and $\beta\in\R$ be fixed as in Lemma \ref{lem H1 and nodal}. We denote with 
$\cc_{k}$, $k\in\Z$, the nodal component of $v$ asymptotic to 
\[
\{(x,y) : \beta + \pi k  < \alpha y + n x < \beta + \pi (k+1) \}
\text{ as }y\to+\infty.
\]
\end{definition}

By Lemma \ref{lem:pochezoneinfinite} we have that $\cc_{k}$ and $\cc_h$ are disjoint, as long as 
$h\neq k$. To conclude the proof of Proposition \ref{prop:nod} we are left to show that the sets 
$\cc_k$ exhaust the nodal components of $v$. At the moment we can not assure that each $\cc_k$ intersects 
the $x$-axis. However, in such case, the horizontal order is preserved.
\begin{lemma}\label{lem:ordinate}
Let $\cc_{k_1}$, $\cc_{k_2}$ be two nodal components of $v$ as in Definition \ref{def:cc}, and let $k_1<k_2$. If $ \overline{\cc}_{k_i}\cap\{(x,0)\}\neq\emptyset$, $i=1,2$, 
then 
\[
(\widehat x_i,0) \in \overline{\cc}_{k_i}
\quad\implies\quad
\widehat x_1 < \widehat x_2.
\]
\end{lemma}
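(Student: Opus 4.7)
The plan is a planar separation argument by contradiction. Assuming $\widehat x_1 \ge \widehat x_2$, I would construct two disjoint simple arcs $\gamma_1, \gamma_2$ with $\gamma_i$ running inside $\overline{\cc}_{k_i}$ from $(\widehat x_i, 0)$ up to a point $(\xi_i, Y)$ at some height $Y \gg y^*$ lying inside the asymptotic strip associated to $\cc_{k_i}$ by Lemma \ref{lem H1 and nodal}. Because the asymptotic strips are strictly ordered from left to right by $k$, the upper endpoints automatically satisfy $\xi_1 < \xi_2$, while the lower endpoints satisfy $\widehat x_1 \ge \widehat x_2$ by assumption. The two arcs will therefore be interleaved across the horizontal strip $\Sigma = \R \times [0, Y]$, forcing them to intersect and contradicting the disjointness $\cc_{k_1} \cap \cc_{k_2} = \emptyset$.

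For the construction of the arcs I would rely on Lemma \ref{lem:pochezoneinfinite}, which states that $\cc_{k_i}$ is connected and its intersection with $\{y > y^*\}$ is itself connected and asymptotic to a single strip; path-connectedness then lets me join any point of $\cc_{k_i}$ in the upper strip at height $Y$ to any other interior point of $\cc_{k_i}$ by a simple polygonal path. To reach the boundary point $(\widehat x_i, 0)$ I would appeal to Lemma \ref{lem:analiticity}: along any line through $(\widehat x_i, 0)$ with $\Phi(\widehat x_i) \neq 0$ the nodal set is discrete and does not accumulate at $y = 0$, so the path can be extended to a simple arc $\gamma_i \subset \overline{\cc}_{k_i}$ meeting the $x$-axis only at its initial endpoint.

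The core of the argument is then standard. The Jordan arc $\gamma_1$ has one endpoint on each horizontal side of $\Sigma$, so it separates $\Sigma$ into a left component $L$ and a right component $R$. In the strict case $\widehat x_1 > \widehat x_2$, the lower endpoint $(\widehat x_2, 0)$ of $\gamma_2$ lies in $\overline L$ and its upper endpoint $(\xi_2, Y)$ lies in $\overline R$, so connectedness of $\gamma_2$ forces a crossing with $\gamma_1$. Since $\gamma_i \setminus \{(\widehat x_i, 0)\} \subset \cc_{k_i}$ and the two open components are disjoint, the only admissible meeting point would be the common starting point on the $x$-axis, which is ruled out by $\widehat x_1 \neq \widehat x_2$.

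The principal obstacle I anticipate is the degenerate case $\widehat x_1 = \widehat x_2 =: \widehat x$. Here one first observes that necessarily $\Phi(\widehat x) = 0$: otherwise a full half-disk around $(\widehat x, 0)$ would have $v$ of constant sign by continuity, hence lie in a single nodal region, contradicting the distinctness of $\cc_{k_1}$ and $\cc_{k_2}$. Thus $\widehat x$ must coincide with one of the discrete zeros of $\Phi$. I would handle this by a perturbation: using the analyticity of $v$ for $y > 0$ and Lemma \ref{lem:analiticity}, the nodal set in a small half-disk around $(\widehat x, 0)$ consists of finitely many analytic arcs dividing the half-disk into horizontally ordered sectors; one can then pick interior points $p_i \in \cc_{k_i}$ close to $(\widehat x, 0)$ with $x(p_1) > x(p_2)$ and use $p_i$ as starting points of the arcs $\gamma_i$, reducing the equality case to the strict one already treated.
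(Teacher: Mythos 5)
Your approach --- a planar separation/connectedness argument --- is essentially the same as the paper's, which compresses it to a single sentence: the segments $\cc_k \cap \{(x,y^*)\}$ are ordered by $k$, and connectedness of each $\cc_k$ forces that ordering to propagate down to $y=0$. Your detailed treatment of the strict case $\widehat x_1 > \widehat x_2$ is correct: the two arcs are interleaved across the strip $\R \times [0,Y]$ and a Jordan-arc separation argument produces the required crossing, which is ruled out since the $\cc_{k_i}$ are disjoint.

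The gap is in the equality case $\widehat x_1 = \widehat x_2$. You correctly deduce that $\Phi(\widehat x) = 0$, but the claim that near $(\widehat x, 0)$ the nodal set ``consists of finitely many analytic arcs dividing the half-disk into horizontally ordered sectors'' is not supported by the cited results: immediately after Lemma \ref{lem:analiticity} the paper explicitly remarks that the critical set $\mathcal{C}$ may accumulate at points of $\{(x,0):\Phi(x)=0\}$, which is exactly where $(\widehat x, 0)$ lies. More fundamentally, once Proposition \ref{prop:nod} is established one sees $\overline{\cc}_k \cap \{(x,0)\} = [x_k, x_{k+1}]$, so adjacent components do share an endpoint on the $x$-axis; taking $k_2 = k_1+1$ and $\widehat x_1 = \widehat x_2 = x_{k_1+1}$ gives admissible choices for which strict inequality fails, and so your perturbative reduction of the equality case to the strict one is attempting to prove something that is not literally true. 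The lemma is invoked in the proof of Proposition \ref{prop:nod} only to conclude that ``the intersections are ordered,'' a weak ordering statement. The cleaner route is to choose $\widehat x_i$ in the relative interiors of the traces, where $\Phi(\widehat x_i) \neq 0$ --- which also makes your path-extension step via Lemma \ref{lem:analiticity} fully rigorous, since that lemma requires $\Phi(\bar x)\neq 0$ --- establish strict inequality for those choices, and deduce the ordering of the closed traces by taking limits.
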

\begin{proof}
This follows by connectedness since the segments $\cc_{k}\cap\{(x,y^*)\}$ are ordered according to the index $k$.
\end{proof}
\begin{lemma}\label{lem:infty in strip}
Let $A$ denote any nodal component of
$v$. There exist
$q_-<q_+$ such that
\[
A \subset \{(x,y): q_- < \alpha y + n x < q_+ \}.
\]
\end{lemma}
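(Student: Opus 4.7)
The plan is to argue by contradiction: suppose that $\sigma:=\alpha y+nx$ is unbounded above on $A$ (the case $-\infty$ is symmetric), so there exists a sequence $(x_k,y_k)\in A$ with $\sigma(x_k,y_k)\to+\infty$. I split according to whether the $y_k$ remain bounded.

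If $y_k\to+\infty$ along a subsequence, then $A\cap\{y>y^*\}\neq\emptyset$, so by Lemma $\ref{lem:pochezoneinfinite}$ and Definition $\ref{def:cc}$ we have $A=\cc_j$ for some $j$. Lemma $\ref{lem H1 and nodal}$ then provides $y^{**}>y^*$ such that $A\cap\{y\ge y^{**}\}\subset\{\beta+\pi j-1<\alpha y+nx<\beta+\pi(j+1)+1\}$; in particular $\sigma$ stays bounded for $y$ large, contradicting $\sigma(x_k,y_k)\to+\infty$.

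If instead $y_k$ remains bounded, then $x_k\to+\infty$ since $n\ge 1$. Writing $x_k=2\pi h_k+\tilde x_k$ with $h_k\in\Z$ and $\tilde x_k\in[0,2\pi)$, extract a subsequence so that $\tilde x_k\to\tilde x_\infty$, $y_k\to y_\infty\in[0,Y]$ and $h_k\to+\infty$. The translates $A_{h_k}:=A-(2\pi h_k,0)$ are nodal components of $v$ containing $(\tilde x_k,y_k)$, which accumulate at $(\tilde x_\infty,y_\infty)$. By Lemma $\ref{lem:moltocorto}$, either $A\cap A_h=\emptyset$ for every $h\neq 0$ (non-periodic case), or $A=A_h$ for every $h$ (periodic case). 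In the non-periodic case, the $A_{h_k}$ are infinitely many pairwise disjoint nodal components on which $v$ has a definite sign; continuity forces $v(\tilde x_\infty,y_\infty)=0$ (otherwise $v$ would have constant sign in a full neighborhood, making infinitely many of the $A_{h_k}$ coincide). For $y_\infty>0$, the Hartman--Wintner theorem recalled in Lemma $\ref{lem:analiticity}$ gives that $v$ has a finite vanishing order at $(\tilde x_\infty,y_\infty)$, so a small neighborhood splits into finitely many nodal sectors; by pigeonhole two of the $A_{h_k}$ must share a sector and hence coincide, contradicting pairwise disjointness. For $y_\infty=0$, the third bullet of Lemma $\ref{lem:analiticity}$ already excludes $\Phi(\tilde x_\infty)\neq 0$; at each of the finitely many boundary zeros of $\Phi$, a similar local analysis up to $\{y=0\}$ (using Lipschitz regularity of $\Phi$ together with elliptic regularity at the boundary) again limits the number of nodal sectors, closing the argument.

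In the periodic case $A=A_h$ for every $h$, necessarily $A$ is bounded in $y$: otherwise Case 1 would yield $A=\cc_j$, which is non-periodic since $\cc_j-(2\pi,0)=\cc_{j-2n}\neq\cc_j$ as $n\ge 1$. Thus $\tilde A=\pi(A)$ is a bounded open subset of the cylinder $(\R/2\pi\Z)\times(0,Y)$. The function $w=e^{-\alpha x}v$ is $2\pi$-periodic, shares the nodal set of $v$, and solves \eqref{eq:linear_in_the_halfplane_w}; it descends to $\tilde w$ on the cylinder with $\tilde w|_{\tilde A}\in H^1_0(\tilde A)$ (after an $\eta$-truncation if $A$ touches $\{y=0\}$). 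Testing the equation against $\tilde w$ and noting that $(\omega e^{-2y}-2\alpha)\tilde w_x\tilde w=\tfrac12(\omega e^{-2y}-2\alpha)(\tilde w^2)_x$ integrates to zero by $x$-periodicity, one obtains $\int_{\tilde A}|\nabla\tilde w|^2=\int_{\tilde A}[\alpha^2+(\mu-\alpha\omega)e^{-2y}]\tilde w^2$; a Steiner symmetrization on the cylindrical strip of circumference $2\pi$, adapted from the proof of Lemma $\ref{lem cc and eigen}$, produces a Poincar\'e-type lower bound forcing $\mu\ge\pi^2$, contradicting the hypothesis $\mu<\pi^2$. The main obstacle I anticipate is exactly this last Steiner symmetrization step on the cylinder (required to produce the sharp constant $\pi^2$), together with the rigorous local analysis at boundary zeros of $\Phi$, where $v$ is merely H\"older continuous up to $\{y=0\}$ and not analytic.
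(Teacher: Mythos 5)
Your proposal takes a genuinely different route from the paper's, and unfortunately both of the delicate branches of your case analysis have real gaps.

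The paper's own argument is designed precisely to avoid the two difficulties you run into. It fixes a transversal half-line $\ell=\{\alpha y+n(x-x_0)=0,\ y>0\}$ through a boundary point $x_0$ with $\Phi(x_0)\neq 0$, and shows that the translates $A_h=A-(2\pi h,0)$ eventually lie entirely to one side of $\ell$. Each $A_h$ meeting $\ell$ contributes a boundary point to $\Ncal\cap\ell\cap\{0<y\le y^*\}$, and the third bullet of Lemma~\ref{lem:analiticity} (which uses $\Phi(x_0)\neq 0$ in an essential way) guarantees this set is \emph{finite}, giving the contradiction. No analysis of $\Ncal$ at boundary zeros of $\Phi$ is ever required.

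Your non-periodic Case 2 with $y_\infty=0$ and $\Phi(\tilde x_\infty)=0$ is the first gap. You claim to ``limit the number of nodal sectors'' at a boundary zero of $\Phi$ ``using Lipschitz regularity of $\Phi$ together with elliptic regularity at the boundary''. But $v$ is only H\"older up to $\{y=0\}$ (Proposition~\ref{prop:existence}(3)), Hartman--Wintner gives a finite vanishing order only in the interior, and there is no finite-sector statement for $v$ at a boundary zero of $\Phi$ under the standing hypotheses. The paper flags exactly this: after Lemma~\ref{lem:analiticity} it explicitly warns that $\mathcal{C}$ ``may still accumulate at some point of the discrete set $\{(x,0):\Phi(x)=0\}$''. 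Your argument needs precisely the control that the paper says is not available, so this step fails as written.

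The periodic case is the second gap. Testing the periodic equation \eqref{eq:linear_in_the_halfplane_w} against $\tilde w$ on the cylinder yields (as you note the middle term drops)
\[
\int_{\tilde A}|\nabla\tilde w|^2=\alpha^2\int_{\tilde A}\tilde w^2+(\mu-\alpha\omega)\int_{\tilde A}e^{-2y}\tilde w^2,
\]
which is \emph{not} the identity used in Lemma~\ref{lem cc and eigen}; there the paper works with $v$ itself, which does not descend to the cylinder. The extra zero-order terms $\alpha^2$ and $-\alpha\omega$ are not sign-definite, so even a sharp Poincar\'e inequality would not force $\mu\ge\pi^2$. Worse, the Steiner symmetrization on a cylinder of circumference $2\pi$ does not produce the constant $\pi^2$: whenever some horizontal section of $\tilde A$ fills the whole circle, the symmetrized domain contains a full annulus, the extremal is $x$-independent, and the resulting lower bound degenerates to $(j_{0,1})^2\approx 5.78<\pi^2$. (And note that you cannot appeal to Lemma~\ref{lem:tuttemenodipi} to rule out periodic components: that lemma, via Lemmas~\ref{lem:vH1davvero} and~\ref{lem:inftyconnessecon0}, depends on the very statement you are trying to prove.) So the "periodic $\Rightarrow\mu\ge\pi^2$" implication does not go through by this route.
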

\begin{proof}
We only show that $A \subset \{(x,y):  \alpha y + n x < q_+ \}$, for some $q^+$,
because the other property follows by a similar argument. In the following, we fix $x_0$ such that $\Phi(x_0)\neq0$, and we write
\[
\ell :=\{(x,y):y > 0,\ \alpha y + n (x-x_0) = 0 \},\qquad
L^- := \{(x,y):y > 0,\ \alpha y + n (x-x_0) < 0 \}.
\]
Moreover, by Lemma \ref{lem H1 and nodal}, we can assume that $v$ does not vanish 
on $\ell\cap\{(x,y):y\ge y^*\}$.  

We have to show that, for some $h\in \Z$,
\[
A_h := A - (2h\pi , 0) \subset L^-.
\]
To start with, we observe that $A_h \cap L^- \neq \emptyset$ for every $h \ge \bar h$ sufficiently large (indeed $A$ is not empty). Let us assume by contradiction that
$A_h \setminus L^- \neq \emptyset$ for every $h \ge \bar h$ as well. By connectedness,
we obtain that $I_h:= \ell \cap A_h$ is non-empty, relatively open in $\ell$, and
with  non-empty (relative) boundary $\partial I_h\subset\Ncal$. Finally,
by Lemmas \ref{lem:infty<2pi} and \ref{lem:moltocorto}, we have that $I_{h_1} \cap  I_{h_2}
=\emptyset$ for every $h_1\neq h_2$. We deduce that the set
\[
\bigcup_{h\ge \bar h} \partial  I_h \subset  \left(\mathcal{N}\cap\ell\cap\{y\le y^*\}\right) \quad\text{ is infinite.}
\]
This contradicts the last part of Lemma \ref{lem:analiticity}.
\end{proof}
\begin{lemma}\label{lem:vH1davvero}
Let $A$ denote any nodal component of $v$. Then
$\left.v\right|_{A} \in H^1(A)$.
\end{lemma}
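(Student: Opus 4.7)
The plan is to combine Lemma~\ref{lem:infty in strip} with the global regularity statement (2) in Proposition~\ref{prop:existence}. The former tells us that $A$ is trapped between two parallel lines of slope $-n/\alpha$ (in the $(x,y)$ plane), while the latter says that $v$ already belongs to $H^1$ of every such strip, provided its inclination is compatible with the sign of $\alpha$. Everything will then follow by restriction, since $A$ is contained in a set where $v$ is globally $H^1$.

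More precisely, I would apply Lemma~\ref{lem:infty in strip} to obtain real numbers $q_-<q_+$ with
\[
A\subset \{(x,y)\in\R\times\R^+ : q_- < \alpha y + n x < q_+\}.
\]
Since the integer $n\geq 1$ is fixed and strictly positive, I can divide by $n$ and rewrite this strip as
\[
\{(x,y)\in\R\times\R^+ : a < x + l y < b\},\qquad \text{with } a=q_-/n,\ b=q_+/n,\ l=\alpha/n.
\]
The key observation is that $l\alpha = \alpha^2/n \geq 0$, so the slope $l$ satisfies the sign condition imposed in Proposition~\ref{prop:existence}(2). Consequently,
\[
v \in H^1\bigl(\{(x,y)\in\R\times\R^+ : a < x + l y < b\}\bigr),
\]
and since $A$ is contained in this strip, the restriction $v|_A$ lies in $H^1(A)$.

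There is essentially no obstacle left at this stage: the real work has already been done in Lemma~\ref{lem:infty in strip} (confining the nodal component to a tilted strip) and in the construction of $v$ via Fourier series (which yields the tilted-strip regularity). The only small point to check is the borderline case $\alpha=0$, in which $l=0$ and the strip degenerates to a vertical strip $\{a<x<b\}$; but this case is already covered both by Proposition~\ref{prop:existence}(1) applied to $w=v$ and by the convention $l\alpha\ge 0$. Hence the conclusion $v|_A\in H^1(A)$ holds in full generality.
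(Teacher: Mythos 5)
Your argument is correct and is essentially the paper's own proof, which simply cites Lemma~\ref{lem:infty in strip} and Proposition~\ref{prop:existence}; you have merely spelled out the routine step of dividing by $n$ to put the tilted strip in the form used in Proposition~\ref{prop:existence}(2) and checking that $l\alpha=\alpha^2/n\ge 0$. Nothing is missing.
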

\begin{proof}
This follows by Lemma \ref{lem:infty in strip} and Proposition \ref{prop:existence}.
\end{proof}
\begin{lemma}\label{lem:inftyconnessecon0}
Let $\cc_k$ be a nodal component of $v$ as in Definition \ref{def:cc}. 
Then $\left.v\right|_{\partial \cc_k}\not\equiv0$. In particular, 
\[
\{ x : (x,0) \in \overline{\cc}_k\}
\qquad\text{contains a non-trivial interval.}
\]
\end{lemma}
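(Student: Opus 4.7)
My plan is to argue by contradiction, combining the $H^1$ information from Lemma \ref{lem:vH1davvero} with the Poincaré-type estimate encoded in Lemma \ref{lem cc and eigen}. Suppose that $\left.v\right|_{\partial\cc_k}\equiv 0$, where the boundary is taken in the closed half-plane. The part of $\partial\cc_k$ lying in $\R\times\R^+$ is already contained in the nodal set $\Ncal$, so $v$ vanishes there automatically. If additionally the trace of $v$ on $\partial\cc_k\cap\{y=0\}$ is identically zero, then together with $\left.v\right|_{\cc_k}\in H^1(\cc_k)$ (Lemma \ref{lem:vH1davvero}) we obtain $\left.v\right|_{\cc_k}\in H^1_0(\cc_k)$.

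To apply Lemma \ref{lem cc and eigen} I need the horizontal width bound. Since by Definition \ref{def:cc} the set $\cc_k$ is asymptotic at $y\to+\infty$ to a strip of width $\pi/n$ in the $(x,y)$ plane, it certainly intersects $\{y>y^*\}$, and Lemma \ref{lem:infty<2pi} provides $\sup_{y>0}|\{x:(x,y)\in\cc_k\}|\le 2\pi$. Lemma \ref{lem cc and eigen} then forces $\mu\ge\pi^2$, contradicting the standing assumption $\mu<\pi^2$ of Proposition \ref{prop:nod}. This proves $\left.v\right|_{\partial\cc_k}\not\equiv 0$, so there exists $x_0\in[0,2\pi]$ with $(x_0,0)\in\overline{\cc}_k$ and $\Phi(x_0)=v(x_0,0)\neq 0$.

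For the "in particular" statement, I would upgrade this single contact point to a full interval by a purely local continuity argument. By Proposition \ref{prop:existence}(3), $v$ is continuous up to $\{y=0\}$, and $v(x_0,0)\neq 0$, so there is a half-disk $D\subset\R\times[0,+\infty)$ centered at $(x_0,0)$ on which $v$ keeps a constant nonzero sign. Then $D\cap(\R\times\R^+)$ is a connected subset of $\{v\neq 0\}$, hence entirely contained in a single nodal component $A$. Because $(x_0,0)\in\overline{\cc}_k$, the set $A$ intersects $\cc_k$ and must coincide with it, so the diameter of $D$ on the $x$-axis is a non-trivial interval included in $\{x:(x,0)\in\overline{\cc}_k\}$.

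I do not foresee a serious obstacle here: the entire proof is a bookkeeping exercise that assembles the three preceding lemmas. The only delicate point is ensuring that the hypotheses of Lemma \ref{lem cc and eigen} are met, which is why I must first invoke Lemma \ref{lem:infty<2pi} to rule out the $2\pi$-periodic case that could destroy the width estimate — this is precisely where the asymptotic description of $\cc_k$ given by Definition \ref{def:cc} enters in an essential way.
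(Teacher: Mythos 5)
Your proof is correct and follows the same route as the paper, which simply cites Lemmas \ref{lem cc and eigen}, \ref{lem:infty<2pi}, and \ref{lem:vH1davvero}; you unpack that citation into a clean contradiction argument, and your continuity argument for the ``in particular'' clause is a valid elaboration of what the paper leaves implicit.

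One small remark: when passing from ``$v$ vanishes on all of $\partial\cc_k$'' to ``$v|_{\cc_k}\in H^1_0(\cc_k)$'' you are invoking the standard fact that an $H^1$ function that is continuous up to the boundary and vanishes there lies in $H^1_0$. This requires knowing that the superlevel sets $\{|v|\ge\eps\}\cap\cc_k$ are compactly contained in $\cc_k$; near finite boundary points this is continuity of $v$, but you also need control at $y\to+\infty$, which here comes from the decay of $v$ in the strip guaranteed by Lemma \ref{lem:infty in strip} and Proposition \ref{prop:existence}. This gap is at the level of what the paper itself tacitly assumes (the paper's one-line proof suppresses it entirely), so it does not undercut your argument, but it is worth being aware that the $H^1_0$ claim is not purely formal.
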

\begin{proof}
The lemma follows by Lemmas \ref{lem cc and eigen}, \ref{lem:infty<2pi} and \ref{lem:vH1davvero}.
\end{proof}
\begin{lemma}\label{lem:tuttemenodipi}
Let $A$ denote any nodal component of $v$. Then
\[
\sup_{y>0} |\{ x : (x,y) \in A\}| \le 2 \pi.
\]
\end{lemma}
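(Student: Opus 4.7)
I would argue by contradiction: suppose some nodal component $A$ satisfies $\sup_{y>0}|\{x:(x,y)\in A\}|>2\pi$. The dichotomy in Lemma \ref{lem:moltocorto} then forces $A\equiv A_1$, i.e.\ $A$ is $2\pi$-periodic in the $x$-direction: any point $(x_0,y_0)\in A$ generates the whole arithmetic family $\{(x_0+2k\pi,y_0):k\in\Z\}\subset A$.

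Next I would split on whether $A$ reaches above the threshold $y^*$ supplied by Lemma \ref{lem H1 and nodal}. If $A\cap\{y>y^*\}\neq\emptyset$, Lemma \ref{lem:infty<2pi} applies directly and gives $\sup_{y>0}|\{x:(x,y)\in A\}|\le 2\pi$, contradicting the standing assumption.

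In the complementary case $A\subset\{y\le y^*\}$, I would combine the periodicity above with Lemma \ref{lem:infty in strip}: the latter confines $A$ inside a strip $\{q_-<\alpha y+nx<q_+\}$ of finite width, so the values $\alpha y_0+n(x_0+2k\pi)$ must stay bounded as $k$ varies in $\Z$, which is absurd since $n\ge 1$. This completes the contradiction in both cases.

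\textbf{Anticipated difficulty.} Given the machinery already assembled, there is no genuine obstacle: Lemma \ref{lem:moltocorto} reduces the problem to periodic components, Lemma \ref{lem:infty<2pi} disposes of the ``tall'' ones, and Lemma \ref{lem:infty in strip} disposes of the ``short'' ones. The only point that deserves a line of care is verifying that Lemma \ref{lem:infty in strip} is stated for \emph{every} nodal component (not merely for those components $\cc_k$ introduced in Definition \ref{def:cc}), which is indeed the case. Thus the proof should reduce to a handful of lines invoking the three cited lemmas.
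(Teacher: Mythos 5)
Your plan is clean in the case $A \cap \{y > y^*\} \neq \emptyset$, but the complementary case hides a circularity. When $A$ is a $2\pi$-periodic component confined to $\{y \le y^*\}$, you invoke Lemma \ref{lem:infty in strip} to confine $A$ to a slanted strip. However, the paper's proof of Lemma \ref{lem:infty in strip} rests on the claim that the sets $I_h = \ell \cap A_h$ are pairwise disjoint, justified there ``by Lemmas \ref{lem:infty<2pi} and \ref{lem:moltocorto}''. That justification works only when the translates $A_h$ are pairwise disjoint, i.e.\ when $A$ is \emph{not} $2\pi$-periodic, and the only tool that rules out periodicity, Lemma \ref{lem:infty<2pi}, applies exclusively to components meeting $\{y > y^*\}$. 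For the $2\pi$-periodic $A \subset \{y \le y^*\}$ you are trying to exclude, all $A_h$ coincide with $A$, all $I_h$ coincide, and the step ``$\bigcup_h \partial I_h$ is infinite'' collapses. So the proof of Lemma \ref{lem:infty in strip} does not actually cover the component you apply it to: to validate it there you would essentially need Lemma \ref{lem:tuttemenodipi} itself.

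The paper's proof takes a genuinely different route precisely to dodge this. From the periodicity and $A \subset \{y < y^*\}$ one extracts a simple $x$-periodic curve $\gamma \subset A$, which splits $\R \times \R^+$ into two open connected pieces $O_0$ and $O_1$, with $O_1 \supset \{y \ge y^*\}$. Taking any nodal component $A'$ meeting $\{y > y^*\}$, Lemma \ref{lem:inftyconnessecon0} guarantees $A'$ also reaches the $x$-axis, so it contains a path $\gamma'$ from $O_1$ to $O_0$; this path must cross $\gamma \subset A$, contradicting $A \cap A' = \emptyset$. Note that the chain of lemmas behind Lemma \ref{lem:inftyconnessecon0} (including Lemma \ref{lem:infty in strip} through Lemma \ref{lem:vH1davvero}) is applied only to components $\cc_k$ that intersect $\{y > y^*\}$, where the disjointness of translates is available via Lemma \ref{lem:infty<2pi} and no circularity occurs. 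To salvage your approach you would need an independent proof of the strip confinement for a periodic $A \subset \{y \le y^*\}$, which amounts to re-proving the statement at hand.
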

\begin{proof}
Let $A$ contradict the result; then $A\equiv A+(2\pi,0)$ (Lemma \ref{lem:moltocorto}) and $A \subset 
\{(x,y):y<y^*\}$ (Lemma \ref{lem:infty<2pi}). As a consequence, there exists a simple curve 
$\gamma\subset A$, with $\gamma+(2\pi,0)\equiv \gamma$. Then  $\R\times\R^+\setminus\gamma = O_0\cup O_1$,  where each $O_i$ is open and connected and $O_1\supset\{(x,y):y\ge y^*\}$. Now, let $A'$ be any 
nodal region of $v$ intersecting $\{(x,y):y\ge y^*\}$. Then $A\cap A'=\emptyset$. By Lemma \ref{lem:inftyconnessecon0} there exists
$\gamma'\subset A'$ with one endpoint in $O_1$ and the other one in $O_0$, so that $\gamma'$ intersects 
$\gamma$, a contradiction.
\end{proof}
\begin{lemma}\label{lem:tutteconnessecon0}
Let $A$ denote any nodal component of $v$. 
Then $\left.v\right|_{\partial A}\not\equiv0$. In particular, 
\[
\{ x : (x,0) \in \overline{A}\}
\qquad\text{contains a non-trivial interval.}
\]
\end{lemma}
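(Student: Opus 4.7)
The plan is to argue by contradiction, reducing the statement to the Poincar\'e-type eigenvalue estimate of Lemma \ref{lem cc and eigen}. Observe that for the distinguished nodal components $\cc_k$ of Definition \ref{def:cc} the corresponding property was already established in Lemma \ref{lem:inftyconnessecon0}; the task here is to extend that argument to \emph{every} nodal component, relying once more on the standing assumption $\mu<\pi^2$ from Proposition \ref{prop:nod}.

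First I would suppose that $v|_{\partial A}\equiv 0$ and verify that $v|_A\in H^1_0(A)$. Lemma \ref{lem:vH1davvero} already yields $v|_A\in H^1(A)$, and the vanishing of the trace together with the continuity of $v$ up to $y=0$ (Proposition \ref{prop:existence}) upgrades this membership to $H^1_0(A)$ via a standard Lipschitz truncation. Then Lemma \ref{lem:tuttemenodipi} supplies the uniform width bound $\sup_{y>0}|\{x:(x,y)\in A\}|\le 2\pi$. These two facts match exactly the hypotheses of Lemma \ref{lem cc and eigen}, whose conclusion $\mu\ge\pi^2$ contradicts the standing assumption of Proposition \ref{prop:nod}.

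For the ``in particular'' part I would translate $v|_{\partial A}\not\equiv 0$ into a geometric statement on $\{y=0\}$. The boundary $\partial A$ decomposes into a portion lying inside the open upper half-plane, where $v=0$ by the very definition of a nodal component, and a portion lying on $\{y=0\}$, where $v=\Phi$. Hence the non-vanishing of $v|_{\partial A}$ forces the existence of some $(\bar x,0)\in\overline A\cap\{y=0\}$ with $\Phi(\bar x)\neq 0$; by continuity, $v$ keeps a constant sign in a neighborhood of $(\bar x,0)$, and this neighborhood meets $\R\times\R^+$ inside a single nodal component, which must be $A$ itself. Projecting onto $\{y=0\}$ then produces the desired non-trivial interval contained in $\overline A\cap\{y=0\}$.

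The only subtle point I foresee in the plan is the implication ``vanishing trace implies $H^1_0$'' in the presence of the isolated critical points from the set $\mathcal{C}$ of Lemma \ref{lem:analiticity} that may accumulate on $\partial A$. Since $\mathcal{C}$ is discrete in $\R\times\R^+$, it has zero $H^1$-capacity in $\R^2$ and thus poses no obstruction to the cutoff approximation; I expect this to be the only step that is not a direct citation of a preceding lemma.
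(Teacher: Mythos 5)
Your proposal is correct and matches the paper's approach, which is literally to combine Lemmas \ref{lem cc and eigen}, \ref{lem:tuttemenodipi} and \ref{lem:vH1davvero}. You also correctly identify and resolve the one subtlety the paper leaves implicit, namely that vanishing trace plus $H^1$ membership yields $H^1_0(A)$ despite the possibly non-Lipschitz boundary, using the zero $H^1$-capacity of the discrete singular set in the plane; your argument for the ``in particular'' clause via continuity of $v$ near a boundary point where $\Phi\neq 0$ is likewise sound.
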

\begin{proof}
The lemma follows by Lemmas \ref{lem cc and eigen}, \ref{lem:tuttemenodipi} 
and \ref{lem:vH1davvero}.
\end{proof}
We are ready to conclude the proof of the main result of the section.
\begin{proof}[End of the proof of Proposition \ref{prop:nod}]
	We are left to show that the sets $\cc_k$ (Definition \ref{def:cc}) exhaust the nodal 
	components of the function $v$ so that, in particular, for each $\cc_k$ there exists 
	two consecutive zeros of the function $\Phi$, $x_j < x_{j+1} \in [0,2\pi]$, and $h \in \Z$ such that
	\[
		\overline{\cc}_k \cap \{(x,0)\} = [x_j, x_{j+1}] + (2h\pi,0).
	\]
	Let $\cc_k$ be any connected component as in Definition \ref{def:cc}, then by Lemma 
	\ref{lem:tutteconnessecon0} and continuity of the function $v$ (see Proposition \ref{prop:existence}) 
	there exist two consecutive zeros $x_j < x_{j+1}$ and $h \in \Z$ such that 
	\[
		[x_j, x_{j+1}] + (2h\pi,0) \subset \overline{\cc}_k \cap \{(x,0)\}.
	\]
	By periodicity in the $x$-direction, it follows that
	\[
	[x_j, x_{j+1}] + (2(h+1)\pi,0) \subset \overline{\cc}_{k+2n} \cap \{(x,0)\}.
	\]
	Now, on the one hand, for $y \geq y^*$ we already know that the nodal set of $v$ between 
	$\cc_k$ (included) and 
	$\cc_{k+2n}$ (excluded) is precisely given by the $2n$ sets $\cc_{k}, \dots, \cc_{k+2n-1}$.
	On the other hand, for $y =0$ the nodal set of $v$ between $(x_j + 2h\pi,0)$ and 
	$(x_j + 2(h+1)\pi,0)$ consists in exactly $2n$ intervals. Once again, we appeal to Lemma 
	\ref{lem:inftyconnessecon0} to infer that every $\cc_{k}, \dots, \cc_{k+2n-1}$ contains exactly 
	one interval on $\{(x,0)\}$, and the intersections are ordered by Lemma \ref{lem:ordinate}. 
	The remaining conclusions follow straightforwardly.
\end{proof}

\section{End of the proof of Theorem \ref{thm:main_intro}}\label{sec:proofmain}

We give the proof in the case $K=2n$ is even. The odd case can be treated with minor 
changes, see the discussion at the beginning of Section \ref{sec:sepvar}. 

In view of Proposition \ref{prop:back2disk}, the existence of an element of $\Scal_\rot$, as
defined in \eqref{eq:S_om}, with the required nodal properties is equivalent to the existence of 
a solution of \eqref{eq:linear_in_the_halfplane}, having trace
\begin{equation}\label{eq:traceproof}
\Phi(x) =  \sum_{m=1}^K  \frac{(-1)^{m+1}}{l_m} s_m \varphi_m
\end{equation}
(recall equations \eqref{eq:def_li}, \eqref{eq:def_v}), and enjoying properties b) and c) in 
Section \ref{sec:disk2halfplane} (property a) is already contained in 
\eqref{eq:linear_in_the_halfplane}).

The existence of such functions is provided by Proposition \ref{prop:existence}, while properties 
b) and c) follow from Proposition \ref{prop:nod}, once $\Phi$ satisfies the compatibility 
conditions \eqref{eq:compcond}, i.e.\  
\begin{equation}\label{eq:main_cond_in_proof}
\phi_k = \frac{1}{2\pi}\int_0^{2\pi} e^{-(ik+\alpha) x}\Phi(x)\,dx=0, \quad |k|<n,
\qquad\text{and $\phi_n\neq0$ }
\end{equation}
(or equivalently  $\phi_{-n}=\overline{\phi_n}\neq0$).
Under the validity of these conditions, also the asymptotic 
expansion \eqref{eq:expans_intro} follows from Proposition \ref{prop:nod} and the definition 
of the map $\Tcal$ (equation \eqref{eq:defT}), see also Remark \ref{rem:back2traces}. The details 
of these calculations are very similar to those in \cite[Proof of Thm. 1.5]{MR4020313}

Writing $c_m = s_m/l_m$ in \eqref{eq:traceproof} and \eqref{eq:main_cond_in_proof}, and 
recalling also Remark \ref{rem:onlyif}, we obtain 
that Theorem \ref{thm:main_intro} is equivalent to the following assertion: \emph{there exists $\bar c=(\bar c_1, \dots, \bar c_{2n})$, with $(-1)^{m+1} c_m>0$, such that 
\[
	\sum_{m  = 1}^{2n}  \frac{1}{2\pi} \int_0^{2\pi} e^{-(ik+\alpha)x} c_m \bound_m(x) dx = 0, \quad |k|<n,
\]
and 
\[
	\sum_{m  = 1}^{2n}  \frac{1}{2\pi} \int_0^{2\pi} e^{-(in+\alpha)x} c_m \bound_m(x) dx \neq 0
\]
if  and only if $c=t\bar c$.}

To prove this last claim, let us define the matrix $A \in \C^{2n \times 2n}$
\[
\begin{split}
	A &= (a_{km})_{\substack{k=-n+1, \dots, n\\ m = 1, \dots, 2n} } = \left(
		\frac{1}{2\pi} \int_0^{2\pi} e^{-(ik+\alpha)x} \bound_m(x) dx \right)_{km} \\
	  &= \left(
	  \frac{1}{2\pi} \int_0^{2\pi} e^{-(ik+\alpha)\var_m} \bound_m(\var_m) d\var_m \right)_{km}.
\end{split}	
\]
Observe that we have suitably renamed the dummy variables in each integral as, later, this will lead us to more manageable identities. We can write the set of compatibility conditions \eqref{eq:main_cond_in_proof} as a system of linear equations,
\begin{equation}\label{eqn lin sys}
A \begin{pmatrix}
c_1 \\ c_2 \\ \dots \\ c_{2n}
\end{pmatrix} = \begin{pmatrix} 0 \\ 0 \\ \dots \\ \phi_n
\end{pmatrix}.
\end{equation}
To show our claim, we prove that the matrix $A$ is invertible, and that it is possible to choose $\phi_n \neq 0$ such that the solution vector is real and sign-alternating. First, exploiting the multi-linearity of the determinant, we have
\[
	\det A =  \frac{1}{(2\pi)^{2n}} \int_{[0,2\pi]^{2n}} \prod_{m=1}^{2n} e^{-\alpha \var_m}\bound_m(\var_m) \cdot  \det A'
\]
where we have introduced the matrix
\[
	A' = \begin{pmatrix}
e^{-i(-n+1)\var_1} & e^{-i(-n+1)\var_2} & \dots & e^{-i(-n+1)\var_{2n}} \\
e^{-i(-n+2)\var_1} & e^{-i(-n+2)\var_2} & \dots & e^{-i(-n+2)\var_{2n}} \\
&&\dots&& \\
e^{-in\var_1} & e^{-in\var_2} & \dots & e^{-in\var_{2n}}
\end{pmatrix}.
\]
Factoring out the coefficients of the first row, we recognize Vandermonde's determinant and compute
\begingroup
\allowdisplaybreaks
\begin{align*}
\det A' &=  e^{-i(-n+1) \sum_{m=1}^{2n} \var_m} \left|\begin{matrix}
1 & \dots & 1 \\
e^{-i\var_1} & \dots & e^{-i \var_{2n}} \\
&\dots&& \\
e^{-(2n-1) i \var_1} & \dots & e^{-(2n-1) i \var_{2n}}
\end{matrix} \right|\\
&=  e^{-i(-n+1) \sum_{m=1}^{2n} \var_m} \prod_{1 \leq p < q \leq 2n} \left(e^{-i \var_q} - e^{-i \var_p}\right)\\
&=  e^{i(n-1) \sum_{m=1}^{2n}  \var_m} \prod_{1 \leq p < q \leq 2n} (-1) e^{-\frac12 i \var_q - \frac12 i \var_p} \left(- e^{-\frac12 i \var_q + \frac12 i \var_p} + e^{-\frac12 i \var_p + \frac12 i \var_q}\right) 
\\
&= e^{i(n-1) \sum_{m=1}^{2n}  \var_m} (-1)^{\frac{2n(2n-1)}{2}} e^{-\frac12 i(2n-1) \sum_{m=1}^{2n}  \var_m} \prod_{1 \leq p < q \leq 2n} \left( e^{-\frac12 i \var_p + \frac12 i \var_q} - e^{-\frac12 i \var_q + \frac12 i \var_p}\right) \\
&=  (-1)^{n}  (2i)^{\frac{2n(2n-1)}{2}} e^{-\frac12 i \sum_{m=1}^{2n}  \var_m} \prod_{1 \leq p < q \leq 2n} \left(\frac{e^{\frac12 i (\var_q -\var_p)} - e^{-\frac12 i (\var_q-\var_p)}}{2i}\right) \\
&=  (-1)^{n}  (2i)^{n(2n-1)}  e^{-\frac12 i \sum_{m=1}^{2n}  \var_m} \cdot \prod_{1 \leq p < q \leq 2n} \sin \left(\frac{\var_q -\var_p}{2}\right).
\end{align*}
\endgroup
Thus we find
\[
\det A = \frac{(-1)^{n}  (2i)^{n(2n-1)} }{(2\pi)^{2n}} \int_{[0,2\pi]^{2n}}  \underbrace{\prod_{m=1}^{2n} e^{-\alpha \var_m}\bound_m(\var_m) \prod_{1 \leq p < q \leq 2n} \sin \left(\frac{\var_q -\var_p}{2}\right)}_{\mathrm{Mod}} \cdot \underbrace{e^{-\frac12 i \sum_{m=1}^{2n}  \var_m}}_{\mathrm{Phase}} .
\]
We show that the integral in the previous expression is always different from 0. We recall that, by assumption, the functions $\bound_m$ are supported on ordered intervals. More precisely, using the notation introduced in Proposition \ref{prop:nod}, we have
\[
	\{ t \in [0, 2\pi] : \bound_m(t) > 0 \} = (x_m, x_{m+1}).
\]
As a result, the integral can be restricted to the open and not empty set
\[
	\mathcal{O} = (x_1, x_{2}) \times (x_2,x_3) \times \dots \times (x_{2n}, x_{2n+1}) \subset [0,2\pi]^{2n}.
\] 
Moreover, for any choice $1 \leq p < q \leq 2n$, in $\mathcal{O}$ we have $0 < \var_q-\var_p < 2\pi$ and thus
\[
	0 < \frac{\var_q-\var_p}{2} < \pi \implies \sin \left(\frac{\var_q -\var_p}{2}\right) > 0.
\]
As it turns out, the factor denoted as $\mathrm{Mod}$ is strictly positive in $\mathcal{O}$. This function corresponds to the modulus of the integral function. On the other hand, the factor $\mathrm{Phase}$ is complex and of modulus 1. Let us investigate more closely the argument of $\mathrm{Phase}$. We find
\[
	\sum_{m=1}^{2n} x_m < \sum_{m=1}^{2n} t_m < \sum_{m=1}^{2n} x_{m+1} = \sum_{m=1}^{2n} x_m + (x_{2n+1}-x_1) < \sum_{m=1}^{2n} x_m + 2\pi.
\]
that is, letting $X = \sum_{m=1}^{2n} x_m$, for any  $(t_1, \dots, t_{2n}) \in \mathcal O$
\[
	0 < \frac12 \left(\sum_{m=1}^{2n} t_m - X\right) < \pi.
\]
We can rewrite the determinant as
\[
\det A = C \left[ \int_{\mathcal{O}}  \mathrm{Mod} \cdot \cos {\frac12 \left( \sum_{m=1}^{2n}  \var_m - X\right)} -i \int_{\mathcal{O}}  \mathrm{Mod} \cdot \sin {\frac12 \left( \sum_{m=1}^{2n}  \var_m - X\right)} \right]
\]
for some complex constant $C \in \C \setminus \{0\}$. By the previous discussion, the second integral is positive. It follows that the determinant of $A$ is not zero, proving that the linear system \eqref{eqn lin sys} has a unique solution for any $\phi_n$.

We now show that there exists $\phi_n \neq 0$ such that the solution vector is real and sign-alternating. By Cramer's rule we have
\[
c_l = (\det A)^{-1} \det A_l
\]
where $A_l$ is the matrix obtained by replacing the $l$ column of $A$ with the right hand side of system \eqref{eqn lin sys}. Now, by the same considerations as before, we have
\[
\det A_l =  \frac{1}{(2\pi)^{2n}} \int_{[0,2\pi]^{2n}} \prod_{m=1, m \neq l}^{2n} e^{-\alpha \var_m}\bound_m(\var_m) \cdot  \det A'_l
\]
where 
\[
A'_l = \begin{pmatrix}
e^{-i(-n+1)\var_1} & e^{-i(-n+1)\var_2} & \dots & e^{-i(-n+1)\var_{l-1}} & 0 & e^{-i(-n+1)\var_{l+1}} & \dots & e^{-i(-n+1)\var_{2n}} \\
e^{-i(-n+2)\var_1} & e^{-i(-n+2)\var_2} & \dots & e^{-i(-n+2)\var_{l-1}} & 0 & e^{-i(-n+2)\var_{l+1}} & \dots & e^{-i(-n+2)\var_{2n}} \\
&&\dots&& \\
e^{-in\var_1} & e^{-in\var_2} & \dots & e^{-in\var_{l-1}} & \phi_n & e^{-in \var_{l+1}} & \dots & e^{-in\var_{2n}}
\end{pmatrix}
\]
Developing the determinant with respect to the $l$-th column, factoring out the first line and exploiting once more Vandermonde's determinant, we find
\begingroup
\allowdisplaybreaks
\begin{align*}
\det A_l' &=  (-1)^{l-1} \phi_n e^{-i(-n+1) \sum_{m=1, m\neq l}^{2n} \var_m} \left|\begin{matrix}
1 & \dots & 1 \\
e^{-i\var_1} & \dots & e^{-i \var_{2n}} \\
&\dots&& \\
e^{-(2n-2) i \var_1} & \dots & e^{-(2n-2) i \var_{2n}}
\end{matrix} \right|\\
&=  (-1)^{l-1} \phi_n e^{-i(-n+1) \sum_{m=1, m\neq l}^{2n} \var_m} \prod_{1 \leq p < q \leq 2n, p,q \neq l} \left(e^{-i \var_q} - e^{-i \var_p}\right)\\
&=  (-1)^{l-1} \phi_n  e^{i(n-1) \sum_{m=1, m\neq l}^{2n}  \var_m} \prod_{1 \leq p < q \leq 2n, p,q \neq l} (-1) e^{-\frac12 i \var_q - \frac12 i \var_p} \left(- e^{-\frac12 i \var_q + \frac12 i \var_p} + e^{-\frac12 i \var_p + \frac12 i \var_q}\right) 
\\
&=  (-1)^{l-1} \phi_n  e^{i(n-1) \sum_{m=1, m\neq l}^{2n}  \var_m} (-1)^{\frac{(2n-1)(2n-2)}{2}} e^{-\frac12 i(2n-2) \sum_{m=1, m \neq l}^{2n}  \var_m}\cdot
\\
&\phantom{aaaaaaaaaaaaaaaaaaaaaaaaaaaaaaaaaaaaaaaaa}
\cdot \prod_{1 \leq p < q \leq 2n, p,q \neq l} \left( e^{-\frac12 i \var_p + \frac12 i \var_q} - e^{-\frac12 i \var_q + \frac12 i \var_p}\right) \\
&=  (-1)^{l+n-2} \phi_n  (2i)^{\frac{(2n-1)(2n-2)}{2}} \prod_{1 \leq p < q \leq 2n, p} \left(\frac{e^{\frac12 i (\var_q -\var_p)} - e^{-\frac12 i (\var_q-\var_p)}}{2i}\right) \\
&=  (-1)^{l+n}  (2i)^{(2n-1)(n-1)}  \phi_n \cdot \prod_{1 \leq p < q \leq 2n, p,q \neq l} \sin \left(\frac{\var_q -\var_p}{2}\right)
\end{align*}
\endgroup
We obtain
\[
\begin{split}
c_l &= \frac{(\det A)^{-1} (-1)^{l+n}  (2i)^{(2n-1)(n-1)}  \phi_n }{(2\pi)^{2n-1}} \int   \prod_{m=1, m \neq l}^{2n} e^{-\alpha \var_m} \bound_m(\var_m) \prod_{1 \leq p < q \leq 2n, p, q \neq l} \sin \left(\frac{\var_q -\var_p}{2}\right)\\
&= (-1)^{l+1} \Gamma \int_{[0,2\pi]^{2n-1}}   \prod_{m=1, m \neq l}^{2n} e^{-\alpha \var_m} \bound_m(\var_m) \cdot \prod_{1 \leq p < q \leq 2n, p, q \neq l} \sin \left(\frac{\var_q -\var_p}{2}\right)
\end{split}
\]
where $\Gamma \in \C$. Reasoning as before, we see that the integral is always strictly positive. Thus $c_l$ satisfies the condition $(-1)^{l+1}c_l > 0$ if and only if $\Gamma$ is real and positive, $\Gamma = t > 0$. We obtain the solution
\[
c_l = t (-1)^{l+1} \int_{[0,2\pi]^{2n-1}}   \prod_{m=1, m \neq l}^{2n} e^{-\alpha \var_m} \bound_m(\var_m) \prod_{1 \leq p < q \leq 2n, p, q \neq l} \sin \left(\frac{\var_q -\var_p}{2}\right).
\]
and
\[
	\phi_n = t (-1)^{n+1} \frac{2^{2n-2} }{\pi} \int_{[0,2\pi]^{2n}}  \prod_{m=1}^{2n} e^{-\alpha \var_m}\bound_m(\var_m) \prod_{1 \leq p < q \leq 2n} \sin \left(\frac{\var_q -\var_p}{2}\right) \cdot e^{-\frac12 i \sum_{m=1}^{2n}  \var_m}.
\]

\begin{proof}[Proof of Corollary \ref{coro:intro}]
This follows by uniqueness of $\bar s$; indeed, notice that a rotation of $2\pi/K$ leaves the 
data unchanged, while the indexes of the densities are shifted by $1$. By uniqueness, 
$\bar s_{m} = \bar s_{m-1}$, for every $m$.
\end{proof}

\section{Single-mode special solutions}\label{sec:special}

In the following we deal with the fundamental single-mode solutions that we constructed by 
separation of variables in Section \ref{sec:sepvar}. Theorems \ref{prop:intr_res}, 
\ref{prop:intr_entire} will follow once again by Proposition \ref{prop:back2disk}.  

\subsection{The homogeneous Dirichlet problem}	

We now turn our attention to the homogeneous version of \eqref{eq:linear_in_the_halfplane}, that is we look for conditions under which there exists a non zero solution $v$ of
\begin{equation}\label{eq:linear_in_the_halfplane_HD}
	\begin{cases}
		-\Delta v + \omega e^{-2y} v_x = e^{-2y}\mu v & x\in\R,\,y>0\\
		v(x + 2\pi, y) = e^{2\pi \alpha}v(x,y)              & x\in\R,\,y\ge 0\\
		v(x,0)=0 & 0\le x \le 2\pi
	\end{cases}
\end{equation}
with nodal set consisting of $2k$ strips (up to horizontal $2\pi$-periodicity), $k\ge1$, that connect the boundary $y=0$ with $y \to +\infty$, as in the previous section. Clearly \eqref{eq:linear_in_the_halfplane_HD} may have non-zero 
solutions only for some specific choices of parameters (this is indeed the case according to Lemma \ref{lem:strong_coercivity}). For this reason, in this section we consider the number $k \geq 1$ and the parameter $\alpha \in \R$ as givens of the problem, and we look for pairs of numbers $(\mu,\omega) \in \R^2$ such that a solution $v$ as specified above exists.

The analysis that we have conducted in Section \ref{sec:sepvar} can be exploited to give a direct solution to this problem. Indeed we have the following result.

\begin{lemma} \label{lem:bessel_HD}
	For any $k \geq 1$, $\alpha \in \R$, there exists at least a value $\lambda\in \C$ satisfying
	\begin{equation}\label{eqn lambda ring}
		\begin{cases}
				\modB_{k-i\alpha}(\lambda)=0 \\
				\modB_{k-i\alpha}(t \lambda) \neq 0 &\forall t \in [0,1),
		\end{cases}
	\end{equation}
	where $\modB_\nu$ is defined in \eqref{eq:fintaBessel} for every $\nu\in\C$. For any such $\lambda $ the function
	\[
		v(x,y) =  e^{\alpha x -ky} \real\left(e^{i (k x+\alpha y)} \modB_{k-i\alpha}(\lambda  e^{-2y}) \right)
	\]
is a solution of \eqref{eq:linear_in_the_halfplane_HD}, with
	\[
		\omega = \frac{\imag(\lambda )}{k}, \qquad \mu = \alpha \frac{\imag(\lambda )}{k} - \real(\lambda ).
	\]
Moreover, there exists an analytic map $y\mapsto \zeta(y)$ such that
\[
v(x,y) = 0 
\qquad\iff\qquad
x=\zeta(y) + \frac{h\pi}{k},\qquad h\in\Z,
\]
 and
\[
	\zeta(y) = \frac{1}{k} (\beta - \alpha y) + o(1) \qquad \text{for some $\beta \in \R$ and $y \to +\infty$}.
\]
In particular, for any $y>0$, $v(\cdot, y)$ has exactly $2k$ zeros in each period $x\in[0,2\pi)$.
\end{lemma}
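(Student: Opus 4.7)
The plan is to produce $\lambda$ by a Hadamard-type existence argument, and then obtain $v$ by adapting the separation-of-variables construction of Section~\ref{sec:sepvar}; the one non-routine step is the existence of zeros of $\modB_\nu$. First I would observe that, with $\nu := k - i\alpha$, the power-series definition \eqref{eq:fintaBessel} shows $\modB_\nu$ to be entire of order $1/2$: by Stirling, the coefficients $a_n = 1/[4^n n!\, \Gamma(n+1+\nu)]$ satisfy $-\log|a_n| \sim 2n\log n$. Since $\real \nu = k \geq 1$, both $\Gamma(1+\nu)$ and $\Gamma(2+\nu)$ are finite and nonzero, so $\modB_\nu(0) \neq 0 \neq \modB_\nu'(0)$ and $\modB_\nu$ is non-constant. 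By Hadamard's factorization theorem, any entire function of order strictly less than $1$ without zeros must reduce to a constant, so $\modB_\nu$ admits at least one zero. I would pick $\lambda$ of minimal modulus among them; the non-vanishing condition in \eqref{eqn lambda ring} then follows automatically from $\modB_\nu(0) \neq 0$ and $|t\lambda| < |\lambda|$ for $t \in [0,1)$.

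Next I would reuse the ODE computation carried out in the proof of Lemma~\ref{lem:bessel}. Setting $X_k(y) := \modB_\nu(\lambda e^{-2y})\, e^{-\nu y}$, that computation shows $X_k$ solves \eqref{eq.w} precisely when $\omega\alpha - \mu + i\omega k = \lambda$; inverting this relation for $(\omega,\mu)\in\R^2$ yields the formulas $\omega = \imag\lambda/k$ and $\mu = \alpha \imag\lambda/k - \real \lambda$ announced in the statement. Consequently $w(x,y) := X_k(y) e^{ikx}$ solves the periodic equation \eqref{eq:linear_in_the_halfplane_w}, so
\begin{equation*}
V(x,y) := e^{\alpha x} w(x,y) = e^{(\alpha + ik)x - \nu y}\, \modB_\nu(\lambda e^{-2y})
\end{equation*}
solves the complex-valued version of \eqref{eq:linear_in_the_halfplane}, and taking real parts, which preserves the real-coefficient equation, yields the $v$ of the statement. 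The Dirichlet condition at $y=0$ is $X_k(0) = \modB_\nu(\lambda) = 0$, which holds by construction, while the quasi-periodicity is immediate from the $2\pi$-periodicity of $e^{ikx}$ together with the factor $e^{\alpha x}$.

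For the nodal set I would use the second half of \eqref{eqn lambda ring} to extract a global argument function. For $y > 0$ the point $\lambda e^{-2y}$ traces the open segment $\{t\lambda : 0 < t < 1\}$ which, by the minimality of $\lambda$, avoids all zeros of $\modB_\nu$; hence there is a real-analytic branch $\theta(y) := \arg \modB_\nu(\lambda e^{-2y})$ on $(0,+\infty)$, and writing $\modB_\nu(\lambda e^{-2y}) = r(y) e^{i\theta(y)}$ with $r(y) > 0$ a direct computation gives
\begin{equation*}
v(x,y) = e^{\alpha x - ky}\, r(y)\, \cos\bigl(kx + \alpha y + \theta(y)\bigr).
\end{equation*}
Thus the nodal set is exactly $\{(x,y) : kx + \alpha y + \theta(y) \in \pi/2 + \pi\Z\}$, and setting $\zeta(y) := (\pi/2 - \theta(y) - \alpha y)/k$ produces the required description, with exactly $2k$ solutions in each period $x \in [0,2\pi)$. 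Finally, as $y \to +\infty$ one has $\lambda e^{-2y} \to 0$ and $\modB_\nu(\lambda e^{-2y}) \to 1/\Gamma(1+\nu) \neq 0$, so $\theta(y)$ converges to a real limit $\theta_\infty$ and $\zeta(y) = (\beta - \alpha y)/k + o(1)$ with $\beta := \pi/2 - \theta_\infty$. Beyond the Hadamard step, the only delicate point is to track the branch of $\theta$ along the straight-line path $t \mapsto t\lambda$, and the minimality of $\lambda$ is exactly what makes this lifting unambiguous.
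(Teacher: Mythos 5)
Your proof is correct and follows essentially the same route as the paper: pick a zero $\lambda$ of $\modB_{k-i\alpha}$ of minimal modulus, feed it through the ODE computation from Lemma~\ref{lem:bessel}, and describe the nodal set by writing $\modB_{k-i\alpha}(\lambda e^{-2y})$ in polar form $r(y)e^{i\theta(y)}$ so that $v=e^{\alpha x - ky}\,r(y)\cos\bigl(kx+\alpha y+\theta(y)\bigr)$. The one place you substantively improve on the paper is the existence of a zero of $\modB_{k-i\alpha}$: the paper's proof offers only the observation that $\modB_{k-i\alpha}$ is a non-constant analytic function with $\modB_{k-i\alpha}(0)\neq0$, which is not by itself sufficient (consider $e^z$); your computation that $\modB_{k-i\alpha}$ is entire of order $1/2$, combined with Hadamard's factorization theorem, supplies the missing step in a clean, self-contained way without invoking the spectral theory of Lemma~\ref{lem:weak_coerc}.
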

\begin{proof}
	The result is a direct consequence of Lemma \ref{lem:bessel}. We start by showing that for any choice of parameters, there exists at least a value $\lambda  \in \C$ verifying \eqref{eqn lambda ring}. Indeed, $\modB_{k-i\alpha}$ is a non constant analytic function with $\modB_{k-i\alpha}(0) \neq 0$, and it suffices to consider a zero $\lambda $ of $\modB_{k-i\alpha}$ with the least absolute value in order to guarantee that $\modB_{k-i\alpha}(t\lambda ) \neq 0$ for any $t\in [0,1)$. Of course, many (if not all) the zeros of $\modB_{k-i\alpha}$ may verify this assumption, but these constitute an at most countable discrete subset of $\C$.
	
	Exploiting the fact that the coefficients of \eqref{eq:linear_in_the_halfplane_HD} are real, we find that the function
	\begin{equation}\label{eq single}
		v(x,y) = e^{\alpha x} \real\left(  e^{i k x} D_k(y) \right)
	\end{equation}
	is a solution of \eqref{eq:linear_in_the_halfplane_HD}, where the function $D_k$ solves
	\begin{equation} \label{eq.Dy}
		\begin{cases}
			D_k''(y)  = \left[(k-i \alpha)^2 + \left( \omega \alpha - \mu +i \omega k \right) e^{-2y} \right] D_k(y), \qquad y > 0 \\
			D_k(0) = 0, \; D_k(y) \to 0 \text{ as $y \to +\infty$}.
		\end{cases}
	\end{equation}
	By Lemma \ref{lem:bessel}, equation \eqref{eq.Dy} is solved by any multiple of the function 
	\[
		y \mapsto e^{-(k-i\alpha)y}\modB_{k-i\alpha}\left((\omega \alpha - \mu +i \omega k)e^{-2y}\right),
	\]
	which in turns vanishes for $y \to +\infty$. The initial condition $D_k(0)=0$ is satisfied since we chose $\lambda  = \omega \alpha - \mu +i \omega k$ as a zero of the function $\modB_{k-i\alpha}$ (observe that we are negating \eqref{eq:cond_res}).
	
To conclude, we need to study the nodal properties of the function $v$. From its expression we readily see that for any fixed $y>0$, the function $x\mapsto v(x,y)$ has exactly $2k$ evenly spaced zeros in $[0,2\pi)$ since, by assumption, $\modB_{k-i\alpha}(\lambda  e^{-2y}) \neq 0$. From this we deduce also that the nodal lines of $v$ can be described, up to translations, by a function $y \mapsto \zeta(y)$. We notice that $\zeta$ is continuous by the implicit function theorem, as
	\[ 
		v(x, y) = 0 
		\iff
		\real\left(  e^{i k x} D_k(y) \right) = 0
	\]
and, for such $(x,y)$,
	\[
		\frac{\partial}{\partial x} \real\left(  e^{i k x} D_k(y) \right) = ik  \imag\left(  e^{i k x} D_k(y) \right) \neq 0.
	\]
More explicitly, 
writing 
\[
D_k(y)=\rho(y) e^{i\vartheta(y)}
\]
where $\rho(y)>0$ for $y>0$ and 
$\vartheta$ is an analytic lifting of the argument of $D_k$, we have that
	\[ 
		e^{\alpha x} v(x, y) = \real\left(  e^{i k x} D_k(y) \right) = 0
		\iff
	x - \frac{h\pi}{k}= \frac{1}{k}\left(\beta - 
	\vartheta(y)\right) =:\zeta(y).
	\]
Finally, the asymptotic behavior of $\zeta$ follows as in Lemma \ref{lem H1 and nodal}.
\end{proof}

We conclude with some additional remarks on the result. 
\begin{remark}
	(A question about uniqueness) If $v$ is a solution of \eqref{eq:linear_in_the_halfplane_HD}, then for any $A, \bar x \in \R$, the function $(x,y) \mapsto A v(x-\bar x, y)$ is again solution. We may wonder whether this family of functions completely describes the set of solutions of \eqref{eq:linear_in_the_halfplane_HD} under some additional condition (for instance that for any $x \in \R$, $v(x,y) \to 0$ as $y \to +\infty$). More precisely, fix $\omega$, $\mu$ and $\alpha$, in such a way that \eqref{eq:linear_in_the_halfplane_HD} admits at least a solution. Is this solution unique (up to translation in $x$ and multiplication by a real constant of course)? This seems to be a question of non trivial nature and it is related to the position of the zeros of Bessel functions with different order. From the proof of Lemma \ref{lem:bessel_HD} we can state the following: let $\alpha \in \R$ be such that for any $k_1, k_2 \geq 1$ and $z_1, z_2 \in \C$ we have
\[
\begin{cases}
I_{k_1-i\alpha}(z_1) = I_{k_2-i\alpha}(z_2) = 0 \\
\real(z_1^2) = \real(z_2^2) \\
\frac{\imag(z_1^2)}{k_1} = \frac{\imag(z^2_2)}{k_2}  
\end{cases}	\implies k_1 = k_2.
\]
Then for this specific value of $\alpha$ if \eqref{eq:linear_in_the_halfplane_HD} admits a solution, this solution is unique up to translation in $x$ and multiplication by a real constant.
\end{remark}

\begin{remark}
	(The symmetric case $\alpha = 0$) If $\nu \in \R$ and $\nu \geq 1$, the zeros of the modified Bessel function $I_\nu$ are purely imaginary numbers (and are given by $i j_{\nu,l}$, where $j_{\nu,l}$ is the $l$-th zero of the Bessel function $J_\nu$ with $l \in \N$). It follows that
	\[
		\modB_k(\lambda) = 0 \implies \lambda = - t^2 \quad \text{for some $t > 0$}.
	\]
	As a result, if $\alpha = 0$, then necessarily $\omega = 0$ (no rotation) and $\mu = j_{k,1}^2$. Since all the zeros belong to the same half-line spanning from the origin, the first non trivial zero is also the only one that verifies the assumptions of Lemma \ref{lem:bessel_HD}. We conclude that, in the case $\alpha = 0$, \eqref{eq:linear_in_the_halfplane_HD} has non-zero solutions only if $\mu = j_{k,1}^2$ and $\omega = 0$, and any solution (that converges to zero as $y\to+\infty$) is of the form
	\[
		v(x,y) = \left( A \cos(kx) + B \sin(kx) \right) J_k(j_{k,1}e^{-y})
	\]  
	for some $A,B \in \R$.
\end{remark}

\begin{remark}
	(The asymmetric case $\alpha \neq 0$) By Lemma \ref{lem:strong_coercivity}, and in particular \eqref{eq:_altre_coerc}, we already know that if $\alpha \neq 0$, for \eqref{eq:linear_in_the_halfplane_HD} to have a solution it is necessary that
	\[
	\mu \geq \left(j_{0 , 1}+\sqrt{k^2 + \alpha^2}\right)^2, 
	\]
From numerical explorations (see e.g.\ Figs.~\ref{fig:zeroes}, \ref{fig:3}), it seems that, if 
$\alpha \neq 0$, the zeros of the function $\modB_{k-i\alpha}$ belong to different lines spanning 
from the origin. In contrast with the case $\alpha = 0$, it thus seems to be the case that for 
$\alpha \neq 0$ \eqref{eq:linear_in_the_halfplane_HD} has infinitely many (but still countable 
many) solutions.
\end{remark}
\begin{figure}
\begin{center}
	\input{zeros_theta.pgf}
\caption{numerical zeroes of $\real\modB_{1-i}$ (blue) and $\imag\modB_{1-i}$ (red). The three zeroes located at $10.36+i23.66$, $20.22+i67.99$, $30.21+i132.04$, satisfy condition \eqref{eqn lambda ring}.}
\label{fig:zeroes}
\end{center}
\end{figure}
\begin{figure}[ht]
\begin{center}
	\input{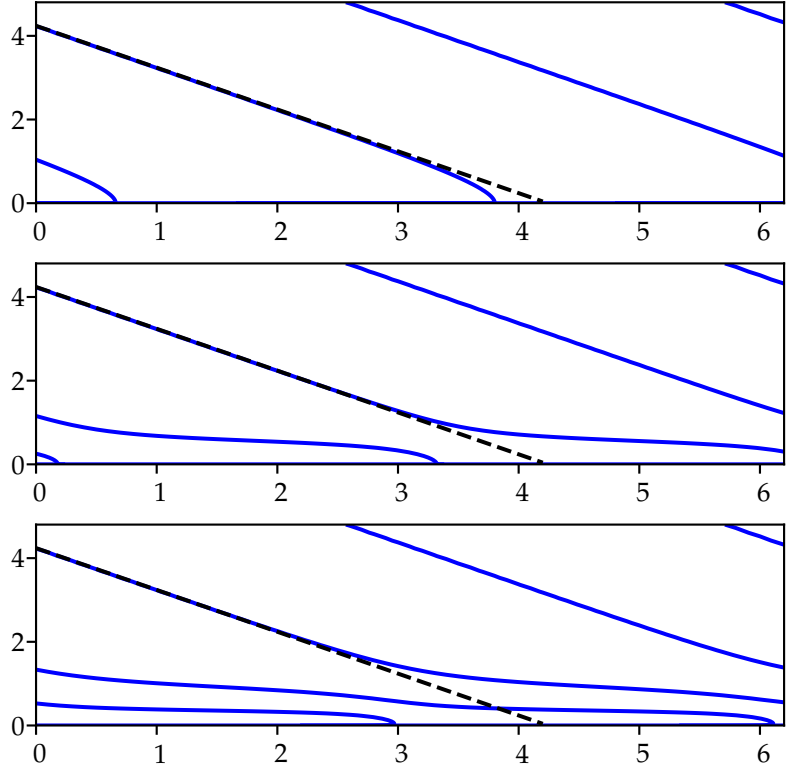}
\caption{nodal sets of the solutions corresponding to the three zeroes in Fig.~\ref{fig:zeroes}.}
\label{fig:3}
\end{center}
\end{figure}

\subsection{The homogeneous Neumann/Robin problem}	
Let $\sigma\in\R$. We consider the problem
\begin{equation}\label{eq:linear_in_the_halfplane_NH}
	\begin{cases}
		-\Delta v + \omega e^{-2y} v_x = e^{-2y}\mu v & x\in\R,\,y>0\\
		v(x + 2\pi, y) = e^{2\pi \alpha} v(x,y)              & x\in\R,\,y\ge 0\\
		\partial_y v(x,0) + \sigma v(x,0)=0 & 0\le x \le 2\pi,
	\end{cases}
\end{equation}
which entails Robin ($\sigma\neq0$) or Neumann ($\sigma=0$) boundary conditions.

As in the previous section we can find single-mode solutions that exhibit a precise nodal behavior.
\begin{lemma} \label{lem:bessel_HN}
	For any $k \geq 1$, $\alpha \in \R$, assume that there exists $\lambda \in \C$ satisfying
	\[
		\begin{cases}
			2\lambda \modB_{k-i\alpha}'(\lambda) + (k-i\alpha-\sigma)\modB_{k-i\alpha}(\lambda) = 0\\
			\modB_{k-i\alpha}(t \lambda) \neq 0 \qquad \forall t \in [0,1).
		\end{cases}
	\]
	Then we have that 
	\[
	v(x,y) = e^{\alpha x - ky} \real\left(e^{i (kx + \alpha y)}  \modB_{k-i\alpha}(\lambda e^{-2y}) \right)
	\]
	is a solution of \eqref{eq:linear_in_the_halfplane_NH} for the particular choice of parameters
	\[
		\omega = \frac{\imag(\lambda)}{k}, \qquad \mu = \alpha \frac{\imag(\lambda)}{k} - \real(\lambda).
	\] 
Moreover, the nodal set of $v$ has the same properties as those described in Lemma \ref{lem:bessel_HD}.
\end{lemma}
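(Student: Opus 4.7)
The plan is to carry out a proof that is structurally identical to that of Lemma \ref{lem:bessel_HD}, with only the boundary condition at $y=0$ having to be reanalyzed. Since the PDE and the quasi-periodicity condition in \eqref{eq:linear_in_the_halfplane_NH} coincide with those of \eqref{eq:linear_in_the_halfplane_HD}, the separation-of-variables ansatz
\[
v(x,y) = e^{\alpha x}\real\left(e^{ikx}D_k(y)\right)
\]
yields the very same ODE \eqref{eq.Dy} for $D_k$, and the $H^1(\R^+)$-selection carried out in Lemma \ref{lem:bessel} still forces $D_k$ to be a constant multiple of $y\mapsto e^{-(k-i\alpha)y}\modB_{k-i\alpha}(\lambda e^{-2y})$, where $\lambda = \omega\alpha-\mu+i\omega k$. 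In particular, by taking $\omega = \imag(\lambda)/k$ and $\mu = \alpha\imag(\lambda)/k - \real(\lambda)$, the candidate $v$ in the statement already satisfies the equation and the quasi-periodicity, for any admissible $\lambda$.

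The only step that genuinely requires computation is the Robin boundary condition $\partial_y v(x,0) + \sigma v(x,0) = 0$. Writing
\[
\partial_y v(x,0) + \sigma v(x,0) = e^{\alpha x}\real\left(e^{ikx}\bigl(D_k'(0) + \sigma D_k(0)\bigr)\right),
\]
and differentiating the explicit formula for $D_k$, I find $D_k(0) = \modB_{k-i\alpha}(\lambda)$ and
\[
D_k'(0) = -(k-i\alpha)\modB_{k-i\alpha}(\lambda) - 2\lambda\,\modB_{k-i\alpha}'(\lambda).
\]
Thus $D_k'(0) + \sigma D_k(0) = -\bigl[(k-i\alpha-\sigma)\modB_{k-i\alpha}(\lambda) + 2\lambda\,\modB_{k-i\alpha}'(\lambda)\bigr]$, and requiring this to vanish for all $x\in[0,2\pi]$ is equivalent, by linear independence of $e^{ikx}$ and $e^{-ikx}$, to the first algebraic condition on $\lambda$ imposed in the hypothesis. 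This plugs directly into the existence part of the statement.

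For the description of the nodal set, I plan to copy verbatim the last part of the proof of Lemma \ref{lem:bessel_HD}: the second assumption $\modB_{k-i\alpha}(t\lambda)\neq 0$ for $t\in[0,1)$ guarantees $D_k(y)\neq 0$ for every $y\ge 0$, so that for each fixed $y$ the map $x\mapsto v(x,y)$ is the real part of $e^{ikx}$ times a nonzero complex scalar, producing exactly $2k$ evenly spaced simple zeros per period. Writing $D_k(y) = \rho(y)e^{i\vartheta(y)}$ with $\rho>0$ and $\vartheta$ an analytic lifting of the argument, the implicit function theorem identifies the nodal lines with the graphs $x = \zeta(y) + h\pi/k$, $h\in\Z$, where $\zeta(y) = (\beta - \vartheta(y))/k$. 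The asymptotics of $\zeta$ as $y\to+\infty$ follow from $\modB_{k-i\alpha}(\lambda e^{-2y})\to 1/\Gamma(1+k-i\alpha)$, exactly as in the Dirichlet case.

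I do not foresee any substantive obstacle: the only genuinely new piece is the algebraic identity for $D_k'(0)+\sigma D_k(0)$, and even that is a one-line differentiation. The interior and asymptotic analysis transfer with no change, because neither the ODE nor the behavior as $y\to+\infty$ depend on the choice of boundary condition at $y=0$.
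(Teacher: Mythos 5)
Your proof is correct and follows essentially the same approach as the paper: separation of variables yields the same ODE as in the Dirichlet case, Lemma \ref{lem:bessel} selects the decaying branch, and the only new ingredient is the one-line computation of $D_k'(0)+\sigma D_k(0)$, which turns the Robin condition into the algebraic constraint on $\lambda$ stated in the hypothesis; the nodal analysis then transfers verbatim from Lemma \ref{lem:bessel_HD}. If anything your write-up is slightly more explicit than the paper's at the step where the Robin boundary condition is imposed.
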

\begin{proof}
	We already know that any function of type
	\[
		v(x,y) = e^{\alpha x} \real\left( e^{ i k x} N_k(y) \right)
	\]
	is a solution of the differential equation in \eqref{eq:linear_in_the_halfplane_NH} provided that
	\[
		N_k''(y)  = \left[(k-i \alpha)^2 + \left( \omega \alpha - \mu +i \omega k \right) e^{-2y} \right] N_k(y), \qquad y > 0.
	\]	
	Once again we can appeal to Lemma \ref{lem:bessel} for an explicit expression for the function $N_k$. In order to impose the boundary condition at $y = 0$ we find
	\[
		N_k'(y) = \modB_{k-i\alpha}'(\lambda e^{-2y}) (-2\lambda e^{-2y}) e^{-(k-i\alpha) y} - \modB_{k-i\alpha}(\lambda e^{-2y}) (k-i\alpha)e^{-(k-i\alpha) y}
	\]
	that is
	\[
		N_k'(0) = \modB_{k-i\alpha}'(\lambda) (-2\lambda) - (k-i\alpha) \modB_{k-i\alpha}(\lambda) = 0.
	\]
	The rest of the proof follows easily.
\end{proof}

\subsection{Entire solutions}
Finally we consider the case of entire solutions, that is we look for functions $v$ that verify
\begin{equation}\label{eq:linear_in_the_plane}
	\begin{cases}
		-\Delta v + \omega e^{-2y} v_x = e^{-2y}\mu v \\
		v(x + 2\pi, y) = e^{2\pi \alpha} v(x,y)              
	\end{cases} \quad (x,y) \in \R^2,
\end{equation}
vanish for $y \to +\infty$ and, as before, change sign exactly $2k$ times ($k\geq 1$) in each period of length $2\pi$ in the $x$ direction. The similar considerations as before lead us to the following result.
\begin{lemma} \label{lem:bessel_E}
	Let $k \geq 1$, $\alpha \in \R$. Consider any $\lambda \in \C$ such that 
	\begin{equation}\label{eqn assump lambda entire}
	\modB_{k-i\alpha}(t \lambda) \neq 0 \qquad \forall t > 0.
	\end{equation}
	Then the function 
	\begin{equation}\label{eq entire}
	v(x,y) =e^{\alpha x-ky} \real\left(e^{i (k x+\alpha y)} \modB_{k-i\alpha}(\lambda e^{-2y}) \right)
	\end{equation}
	is a solution of \eqref{eq:linear_in_the_plane} for the particular choice of parameters 
	\[
		\omega = \frac{\imag(\lambda)}{k}, \qquad \mu = \alpha \frac{\imag(\lambda)}{k} - \real(\lambda).
	\]
\end{lemma}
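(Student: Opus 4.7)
The proof will be a direct verification mirroring those of Lemmas \ref{lem:bessel_HD} and \ref{lem:bessel_HN}. The plan is to factor the ansatz as $v(x,y) = e^{\alpha x}\real(e^{ikx} E_k(y))$ with
\[
E_k(y) := e^{-(k-i\alpha)y}\modB_{k-i\alpha}(\lambda e^{-2y}),
\]
and then check each property required in \eqref{eq:linear_in_the_plane}: the PDE, the quasi-periodicity in $x$, and (implicitly) the decay and nodal structure as $y$ varies over $\R$.

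For the PDE, the computation carried out inside the proof of Lemma \ref{lem:bessel} shows that $E_k$ satisfies
\[
E_k''(y) = \left[(k-i\alpha)^2 + \lambda e^{-2y}\right] E_k(y).
\]
The crucial observation here is that this identity in fact holds for \emph{every} $y\in\R$, not only for $y>0$: the derivation is a formal rearrangement of the globally convergent power series for $\modB_{k-i\alpha}$, which is entire in its argument, so the sign of $y$ plays no role. Substituting $\lambda = \omega\alpha - \mu + i\omega k$, which is equivalent to the stated choice $\omega = \imag\lambda/k$, $\mu = \alpha\imag\lambda/k - \real\lambda$, and plugging the factored ansatz into \eqref{eq:linear_in_the_plane} reduces the PDE to the displayed ODE.

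The quasi-periodicity $v(x+2\pi,y) = e^{2\pi\alpha}v(x,y)$ is immediate from the factor $e^{\alpha x}$ together with the $2\pi$-periodicity of $e^{ikx}$, since $k\in\Z$. For the nodal structure I would fix $y\in\R$ and apply hypothesis \eqref{eqn assump lambda entire} with $t = e^{-2y}>0$: the complex number $\modB_{k-i\alpha}(\lambda e^{-2y})$ is non-zero, so writing it in polar form $\rho(y)e^{i\vartheta(y)}$ reduces $x\mapsto v(x,y)$ to a cosine of $kx$ with non-vanishing amplitude, which has exactly $2k$ evenly spaced zeros per period; this step is identical to the end of the proof of Lemma \ref{lem:bessel_HD}, to which I would refer. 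Decay as $y\to+\infty$ then follows from $\modB_{k-i\alpha}(0) = 1/\Gamma(1+k-i\alpha)\neq 0$, which gives the leading asymptotic $E_k(y)\sim e^{-(k-i\alpha)y}/\Gamma(1+k-i\alpha)$.

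I do not expect a serious obstacle. The only conceptual novelty over Lemma \ref{lem:bessel_HD} is that the boundary condition at $y=0$ has disappeared, and in its place the non-vanishing requirement is strengthened from the interval $t\in[0,1)$ to the full ray $t>0$; correspondingly, we no longer enforce $\modB_{k-i\alpha}(\lambda)=0$, and $y$ is allowed to roam over all of $\R$. This is permissible precisely because $\modB_{k-i\alpha}$ is entire, so the ODE identity from Lemma \ref{lem:bessel} extends without modification to $y<0$.
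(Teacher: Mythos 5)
Your proof is correct and follows the same route the paper intends, namely the single-mode verification pattern of Lemmas \ref{lem:bessel_HD} and \ref{lem:bessel_HN} (the paper in fact omits a written proof of Lemma \ref{lem:bessel_E}, treating it as an immediate variant). You have also correctly pinpointed the one genuine novelty: since $\modB_{k-i\alpha}$ is entire, the ODE identity for $E_k(y)=e^{-(k-i\alpha)y}\modB_{k-i\alpha}(\lambda e^{-2y})$ derived in Lemma \ref{lem:bessel} holds for all $y\in\R$, so the half-plane verification extends to the whole plane, and hypothesis \eqref{eqn assump lambda entire} (non-vanishing on the full ray $t>0$ rather than on $[0,1)$) exactly replaces the boundary condition $\modB_{k-i\alpha}(\lambda)=0$ of the Dirichlet case.
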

Once again, we point out that $\modB_{k-i\alpha}$ is analytic and thus its has at most countably many zeros, meaning that, apart from a negligible set, any $\lambda \in \C$ gives rise to an entire solution.

In the case of entire solutions, it is interesting to study once again the shape of the nodal lines of the solutions, which now are defined also for $y<0$.

\begin{lemma}\label{lem:asy_entire}
	Let $v$ be the function \eqref{eq entire} in Lemma \ref{lem:bessel_E}, then there exists an analytic function $y \mapsto \zeta(y)$, defined for any $y \in \R$, such that
	\begin{itemize}
		\item $v(x,y) = 0$ if and only if $x = \zeta(y) + \frac{h\pi}{k}$, $y\in\R$, $h \in \Z$, 
			and consequently, in the regions $\{(x,y) : \frac{h\pi}{k} < x - \zeta(y) < \frac{(h+1)\pi}{k}\}$, for any $h \in \Z$, $v$ does not change sign;
	    \item for $y \to +\infty$, $\zeta$ is asymptotic to a line: there exists $\beta \in \R$ such that
		\[
			\zeta(y) = \frac{1}{k} (\beta - \alpha y)+o(1)\qquad\text{as }y\to+\infty;
		\]
	    \item for $y \to -\infty$, $\zeta$ is asymptotic to an exponential curve 
	    \[
			\zeta(y) = 
			\gamma e^{-y} + O(1)\qquad\text{as }y\to-\infty,
		\]
		where 
		\[
			\gamma = 
			\begin{cases}
			\frac{1}{k} \sign(\omega) \sqrt{\sqrt{\left(\frac{\omega\alpha-\mu}{2}\right)^2 
			+ \left(\frac{\omega k}{2}\right)^2} - \frac{\omega\alpha-\mu}{2}} & \omega\neq0\\
			0       &  \omega=0,\ \mu<0\\
			\frac{1}{k}\sign(\alpha)\sqrt{\mu}      &  \omega=0,\ \mu>0,\\
			\end{cases}
		\]
		unless $\omega = \mu = 0$, in which case 
		\[
		\zeta(y) = \frac{1}{k}\left(\beta - \alpha y\right)\qquad y\in\R,
	\]
	\end{itemize}
\end{lemma}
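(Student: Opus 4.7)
\textbf{Proof plan for Lemma \ref{lem:asy_entire}.}

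The approach exploits the fact that, by hypothesis \eqref{eqn assump lambda entire}, the factor $\modB_{k-i\alpha}(\lambda e^{-2y})$ is nowhere zero, so it admits a globally analytic polar decomposition $\rho(y)e^{i\theta(y)}$ with $\rho>0$ on $\R$. Substituting into \eqref{eq entire} gives the clean formula
\[
v(x,y) = e^{\alpha x - ky}\rho(y)\cos\bigl(kx + \alpha y + \theta(y)\bigr),
\]
from which the nodal set is exactly $\{(x,y):kx+\alpha y+\theta(y)\in\pi/2+\pi\Z\}$. Setting $\zeta(y) = \bigl(\pi/2 - \alpha y - \theta(y)\bigr)/k$ gives the parametrization $x = \zeta(y) + h\pi/k$, $h\in\Z$, with analyticity inherited from $\theta$. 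Since $\sin(kx+\alpha y+\theta)\neq 0$ along these curves, the implicit function theorem also confirms that $v$ has definite sign in each strip.

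For the asymptote as $y\to+\infty$ one uses $\lambda e^{-2y}\to 0$ and the value $\modB_{k-i\alpha}(0)=1/\Gamma(1+k-i\alpha)\neq0$: hence $\theta(y)$ tends to a real constant and $\zeta(y)=(\beta-\alpha y)/k+o(1)$, just as in Lemma \ref{lem:bessel_HD}. In the degenerate case $\omega=\mu=0$ one has $\lambda=0$, so $\modB_\nu(\lambda e^{-2y})$ is the constant $1/\Gamma(1+\nu)$, $\theta$ is constant for all $y\in\R$, and $\zeta$ is exactly linear on all of $\R$, giving the displayed closed-form expression.

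The core of the proof is the asymptotic as $y\to-\infty$. The plan is to use the identity $\modB_\nu(w) = (\sqrt w/2)^{-\nu}I_\nu(\sqrt w)$ together with the classical large-argument expansion $I_\nu(z)\sim e^z/\sqrt{2\pi z}$, valid for $|\arg z|<\pi/2$. Substituting $w=\lambda e^{-2y}$ with $\sqrt\lambda$ in the principal branch (so that $\real\sqrt\lambda>0$ whenever $\lambda\notin(-\infty,0]$), a direct computation produces
\[
\modB_{k-i\alpha}(\lambda e^{-2y}) \sim \frac{2^{k-i\alpha}}{\sqrt{2\pi}}\,\lambda^{-(k-i\alpha)/2-1/4}\,e^{(k+1/2)y}\,e^{-i\alpha y}\,e^{\sqrt\lambda\,e^{-y}}.
\]
Taking the imaginary part of the logarithm gives $\theta(y)=-\alpha y + C + \imag(\sqrt\lambda)\,e^{-y} + o(1)$; the two $-\alpha y$ contributions cancel in the definition of $\zeta$, leaving $\zeta(y)=\gamma e^{-y}+O(1)$ with $|\gamma|=|\imag\sqrt\lambda|/k$. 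The half-angle identity $(\imag\sqrt\lambda)^2=(|\lambda|-\real\lambda)/2$, combined with $\sign(\imag\sqrt\lambda)=\sign(\imag\lambda)=\sign(\omega k)=\sign(\omega)$ (using $k\geq1$), yields the formula for $\gamma$ in the generic case $\omega\neq 0$; in the subcase $\omega=0,\mu<0$ one has $\sqrt\lambda$ real and hence $\gamma=0$ at once.

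The main technical obstacle is the Stokes line $\lambda\in(-\infty,0)$, which corresponds exactly to the case $\omega=0$, $\mu>0$ and must be handled separately, since there $\real\sqrt\lambda=0$ and the $I_\nu$ asymptotic degenerates into an oscillatory regime. Here I would use instead the identity $\modB_\nu(-s^2)=2^\nu s^{-\nu}J_\nu(s)$ together with the two-term large-$s$ expansion $J_\nu(s)\sim(2\pi s)^{-1/2}\bigl(e^{i(s-\nu\pi/2-\pi/4)}+e^{-i(s-\nu\pi/2-\pi/4)}\bigr)$. For complex order $\nu=k-i\alpha$ with $\alpha\neq0$, these two exponentials have magnitudes $e^{\mp\alpha\pi/2}$, so one dominates the other by a factor $e^{|\alpha|\pi}$; which one dominates is determined by $\sign(\alpha)$, and extracting the dominant phase then produces $\gamma=\sign(\alpha)\sqrt\mu/k$. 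Carefully justifying the matching of the two regimes across the Stokes line, and in particular producing the correct sign of $\gamma$ in each case, is the subtle point of the argument; the other pieces are routine polar-coordinate computations and applications of the Bessel asymptotics recalled from Section~\ref{sec:sepvar}.
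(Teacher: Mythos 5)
Your plan takes essentially the same route as the paper: decompose $v(x,y)=e^{\alpha x-ky}\rho(y)\cos\bigl(kx+\alpha y+\theta(y)\bigr)$ with $\rho e^{i\theta}$ the polar decomposition of $\modB_{k-i\alpha}(\lambda e^{-2y})$, deduce that the nodal set is the family $x=\zeta(y)+h\pi/k$ with $\zeta=(\pi/2-\alpha y-\theta)/k$, and then read off the asymptotics of $\theta$ from Bessel expansions, trisecting the parameter space according to whether $\lambda=0$, $\lambda\in\C\setminus\R_{\le0}$, or $\lambda<0$.

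The only genuine difference is in the Stokes-line case $\omega=0,\ \mu>0$. The paper rewrites $v$ in terms of $\cos(kx)$, $\sin(kx)$ and hyperbolic functions of $\imag w$, introduces the auxiliary $F(X,Y)=\cos X\cos Y\cosh T+\sin X\sin Y\sinh T$, exploits its symmetries and an implicit-function-theorem argument to show the level curve $Z(Y)$ is a monotone bijection with $Z'(0)=\tanh(\pi\alpha/2)$, and concludes $0\le Z(Y)-\sign(\alpha)Y<\pi$. You instead split $\cos\bigl(s-\nu\pi/2-\pi/4\bigr)$ into the two exponentials and observe that for complex order $\nu=k-i\alpha$ with $\alpha\neq0$ their magnitudes differ by the fixed factor $e^{|\alpha|\pi}$, so the phase of the sum equals the phase of the dominant term up to a bounded oscillatory correction. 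This is shorter, makes the necessity of $\alpha\neq0$ (real-zero obstruction for $J_k$) transparent, and still delivers the stated $O(1)$ error. What the paper's longer real-variable route buys is the slightly sharper qualitative information (monotonicity of $Z$, the uniform two-sided estimate); for the Lemma as stated the two are equivalent.

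One sign point you should nail down when you write the full argument. Your own computation gives $\theta(y)=-\alpha y+C+\imag(\sqrt\lambda)e^{-y}+o(1)$, hence
\[
\zeta(y)=\frac{\pi/2-\alpha y-\theta(y)}{k}=\text{const}-\frac{\imag(\sqrt\lambda)}{k}\,e^{-y}+o(1),
\]
so $\gamma=-\imag(\sqrt\lambda)/k$, not $+\imag(\sqrt\lambda)/k$. Since $\sign(\imag\sqrt\lambda)=\sign(\omega)$, this makes $\gamma=-\sign(\omega)\,|\imag\sqrt\lambda|/k$. You wrote $|\gamma|=|\imag\sqrt\lambda|/k$ and then inferred the sign from $\sign(\imag\sqrt\lambda)=\sign(\omega)$ alone, which drops this minus. (The paper's own derivation, ``$kx+\imag(\sqrt\lambda)e^{-y}+O(e^y)=\beta$,'' also solves to $\gamma=-\imag(\sqrt\lambda)/k$, in apparent tension with the displayed formula for $\gamma$; so the discrepancy is not created by your argument. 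Either way, when you write this up you should track the sign explicitly rather than assembling $\gamma$ from its modulus and a sign argument applied to the wrong quantity.)
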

\begin{proof}
	The first conclusions of the result follow from similar (and much simpler) considerations as  in Proposition \ref{prop:nod} 
	and Lemma \ref{lem:bessel_HD}. We only study the asymptotic behavior of $\zeta$ 
	as $y\to-\infty$. As we shall see, beyond the validity of \eqref{eqn assump lambda entire}, we need to distinguish three cases, according to the different expansions of the Bessel functions at infinity:  (case 1) $\omega = \mu = 0$; (case 2) $\omega = 0$, $\mu > 0$; (case 3) either $\omega = 0$ and $\mu < 0$, or $\omega \neq 0$.
	
	\noindent \textbf{Case 1)} We start with the simplest case, that is  $\omega = \mu = 0$. This is equivalent to assuming that $\lambda = 0$, whence \eqref{eqn assump lambda entire} is automatically satisfied (recall that $\modB_{k-i\alpha}(0)\neq0$ for $k\ge1$). Substituting in \eqref{eq:linear_in_the_plane} we find that solutions are of the form
	\[
		v(x,y) = e^{\alpha x - k y} \cos(kx + \alpha y).
	\]
	In this case the nodal lines are described, up to translations, by the linear function
	\[
		\zeta(y) = \frac{1}{k}\left(\frac{\pi}{2} - \alpha y\right)\qquad y\in\R,
	\]
	and, in particular, the nodal set of $v$ is a family of parallel straight lines.
	
	\noindent \textbf{Case 2)} Next, we look at the case $\omega = 0$ and $\mu > 0$, which entails $\lambda = - \mu < 0$. We have that $\sqrt{\lambda} = -i\sqrt{\mu}$, where we have chosen the  determination of the square root with negative imaginary part. In this case, exploiting
	\eqref{eq entire}, \eqref{eq modB and I} and the relation between the Bessel functions and their modified versions, we have
	\[
		v(x,y) = e^{\alpha x} \left( \frac12 e^{ikx} J_\nu\left(\sqrt{\mu} e^{-y}\right) + \frac12 e^{-ikx} \overline{J_\nu\left(\sqrt{\mu} e^{-y}\right)}\right)
	\]
(to be precise, we take the line $y \mapsto \sqrt{\lambda}e^{-y}$ as path of monodromy for the 
determination of $J_\nu$).	In particular, from this expression we infer the necessary condition $\alpha \neq 0$: indeed, if $\nu = k \geq 1$, the Bessel function $J_k$ has all of its zeros on the  real  line, and thus we are contradicting \eqref{eqn assump lambda entire}. We have that (see \cite[p.~85]{MR0058756vol2})
	\[
		J_\nu(z) = \sqrt{\frac{2}{\pi z}} \left( \cos\left( z -\frac{\pi}{2}\nu -\frac{\pi}{4}\right) + O\left(\frac{1}{|z|}\right)\right) \qquad \text{for $|z| \to +\infty$ with $|\arg z| < \pi$}.
	\]
	As to what concerns us, we have that $z > 0$. Letting
	\[
		w = \sqrt{\mu} e^{-y} -\frac{\pi}{2}\nu -\frac{\pi}{4} = \left(\sqrt{\mu} e^{-y} -\frac{\pi}{2}k -\frac{\pi}{4}\right) + i \,\frac{\pi}{2} \alpha ,
	\]
	we may simplify the expression for $v$ and see that for $y \to -\infty$ the following asymptotic expansion holds
	\[		
	\sqrt{\frac{\pi \sqrt{\mu}}{2}} e^{-\alpha x - \frac12 y} v(x,y) = \frac12 e^{ikx} \cos w + \frac12 e^{-ikx} \cos \overline{w} + O(e^y).
	\]
	We point out that, in this peculiar case the solution $v$ decays for $y \to -\infty$ since $\imag(w)$ is bounded (constant). The last expression can be further simplified, since
	\[
	\begin{split}
		\frac12 e^{ikx} \cos w + \frac12 e^{-ikx} \cos \overline{w} & = \frac12 (\cos(kx) + i \sin(kx)) \cos w + \frac12  (\cos(kx) - i \sin(kx)) \cos\overline{w} \\
		&= \frac12 \cos(kx) \left[\cos w + \cos\overline{w} \right] +  \frac12 i \sin(kx) \left[\cos w - \cos\overline{w} \right]\\
		&= \cos(kx) \cos( \real w) \cosh (\imag w) + \sin(kx) \sin( \real w) \sinh (\imag w).
	\end{split}
	\]
	In order to determine the asymptotic behavior of the nodal lines of $v$, we need to solve the equation
	\[
		\cos(kx) \cos( \real w) \cosh (\imag w) + \sin(kx) \sin( \real w) \sinh (\imag w) = 0.
	\]
	It seems that this equation cannot be solved explicitly, nevertheless we can describe its set of solutions with sufficient accuracy for our purpose. In order to simplify the notation, we introduce the real function
	\begin{equation}\label{eqn F aux}
		F(X,Y) = \cos(X) \cos(Y) \cosh (T) + \sin(X) \sin(Y) \sinh (T)
	\end{equation}
	where we recall that the parameter $T = \imag w = \frac{\pi}{2}\alpha \neq 0$. In the plane $(X,Y) \in \R^2$, we want to describe the set $F(X,Y) = 0$. First of all, we point out that $F$ is $2\pi$-period both in $X$ and in $Y$ and enjoys the symmetries $F(X,Y) = F(Y,X)$, $F(-X,Y) = F(X,-Y)$, $F(X+\pi, Y) = F(X,Y+\pi) = -F(X,Y)$ and $F(-X,-Y) = F(X,Y)$ for any $(X,Y) \in \R^2$. In particular, we deduce that the equation $F(X,Y) = 0$ has infinitely many solutions and that for any fixed $Y \in \R$ (resp.~$X$) solutions of $F(X,Y)=0$ are equally spaced and of the form $X = X_Y+ h\pi$ for some given $X_Y \in \R$ and $h\in \Z$ (resp.~$Y = Y_X+ h\pi$). We deduce that for any given $Y \in [0,\pi)$ there exists a unique $X \in [0,\pi)$ such that $F(X,Y) =0$ and, vice versa.
	
	Next, let $(X_0, Y_0)\in \R^2$ such that $F(X_0, Y_0) = 0$, by the implicit function theorem the nodal set of $F$ is described locally at $(X_0,Y_0)$ by a function $X=Z(Y)$ if $\partial_X F(X_0, Y_0) \neq 0$. Arguing by contradiction, we have the system
	\[
		\begin{cases}
			\cos(X_0) \cos(Y_0) \cosh (T) + \sin(X_0) \sin(Y_0) \sinh (T) = 0\\
			\cos(X_0) \sin(Y_0) \sinh (T) - \sin(X_0) \cos(Y_0) \cosh (T) = 0
		\end{cases}
	\]
	which has a solution if and only if 
	\[
		\cos^2(Y_0) \cosh^2(T) + \sin^2(Y_0) \sinh^2(T) = 0.
	\]
	But this is impossible since $\cosh^2(T) \neq 0$ and $\sinh^2(T) \neq 0$ (recall that $T \neq 0$). Thus $\partial_X F(X_0, Y_0) \neq 0$ at any zero of $F$. Observe that we can perform similar computations exchanging variables and show that the function $Z$ is a bijection (and thus monotone). By periodicity, we can assume that $Z(0)=\frac{\pi}{2}$. We can determine the sense of monotonicity of $Z$ by computing $Z'(Y)$ for the zero $(X,Y) = (\frac{\pi}{2},0)$. We find
	\[
		Z'(0) = - \frac{\partial_Y F (\frac{\pi}{2},0)}{\partial_X F (\frac{\pi}{2},0)} = \tanh(T) =  \tanh\left(\frac{\pi}{2}\alpha\right)
	\]
	
	Bridging together the previous conclusions, we infer that
	\[
		0 \leq Z(Y) - \sign(\alpha) Y < \pi, \qquad \forall Y \in \R.
	\]
	Going back to the original variable, we find the asymptotic behavior
	\[
	\zeta(y) = \frac{1}{k}\sign(\alpha)\sqrt{\mu}e^{-y} + O(1)
	\qquad\text{as }y\to-\infty.
		\]	

	\noindent \textbf{Case 3)} We conclude with the third and last case, that is $\lambda = \omega \alpha - \mu + i \omega k \in \C \setminus \R_-$ together with \eqref{eqn assump lambda entire}. We recall that the modified Bessel function $I_\nu$ satisfies (see \cite[p.~86]{MR0058756vol2})
	\[
		I_\nu(z) = \frac{e^z}{\sqrt{2\pi z}} \left(1+ O\left(\frac{1}{|z|}\right)\right) \qquad \text{for $|z| \to +\infty$ with $|\arg z| < \frac{\pi}{2} - \delta$}.
	\]
	By \eqref{eq modB and I}, the entire function in \eqref{eq entire} is equal to 
	\[
		v(x,y) = e^{\alpha x } \real\left( e^{ikx} I_\nu\left(\sqrt{\lambda} e^{-y}\right)\right)
	\]
	where we choose as determination of the square root of $\lambda$ the one with strictly positive real part (recall that $\lambda \in \C \setminus \R_-$). Then $|\arg \sqrt\lambda | < 
	\frac{\pi}{2} - \delta$, for some $\delta>0$. We find
	\[
	\begin{split}
		v(x,y) &= e^{\alpha x} \real \left( e^{ikx} \frac{e^{\sqrt{\lambda} e^{-y}}}{\sqrt{2\pi \sqrt{\lambda} e^{-y}}} \left(1 + O(e^y)\right)\right) = e^{\alpha x } \real \left(  C_\lambda e^{ikx + \frac12 y + \sqrt{\lambda} e^{-y}} \left(1 + O(e^y)\right) \right) \\
		\\
		&=  e^{\alpha x +\frac12 y + \real(\sqrt{\lambda}) e^{-y}} \real \left( C_\lambda e^{ikx + i \imag\sqrt{\lambda} e^{-y} + iO(e^y)} \left|1 + O(e^y)\right|\right)= 0,
	\end{split}
	\]
	which in turns gives the asymptotic equation as $y \to -\infty$
	\[
		kx + \imag(\sqrt{\lambda}) e^{-y} + O(e^y) = \beta
 	\]
 	where $\beta \in \R$ and
 	\[
 		 \imag(\sqrt{\lambda}) = \sign(\omega k) \sqrt{\sqrt{\left(\frac{\omega\alpha-\mu}{2}\right)^2 + \left(\frac{\omega k}{2}\right)^2} - \frac{\omega\alpha-\mu}{2}}
 	\]
 	(with $\imag(\sqrt{\lambda}) = 0$ in case $\omega = 0$).
 	Notice that the sign above agrees with the fact that the nodal lines of the solution $v$ are spanned by monotone functions (see the proof of Lemma \ref{lem:bessel_HD}).
\end{proof}

\begin{remark}\label{rem:final}
In view of the results of Section \ref{sec:disk2halfplane} we have that any solution constructed 
in this section corresponds to an element of the corresponding class $\Scal_\rot$. In 
particular, if $\alpha=0$, we obtain (positive and negative parts of) smooth rotating solutions of 
the heat equation, with or without reaction term. Moreover, Lemma \ref{lem:asy_entire} 
provides a description of their nodal lines, which behave 
like arithmetic spirals of equation $\vartheta=\gamma r$ as $r\to+\infty$, as we claimed in Remark \ref{rem_entire_intro}.
\end{remark}

\appendix

\section{Weighted embeddings and Poincar\'{e} inequalities}

In this appendix we give the proof of some results cited in the paper for the sake of completeness. We start with a very classical compact embedding result.

\begin{lemma}\label{lem comp emb}
The functional space $H^1_0(\R^+;\C)$ embeds compactly in
\[
	L = \left\{U \in L^1_\loc(\R^+;\C) : \|U\|_L^2 = \int_{y>0} e^{-2y} |U|^2 < +\infty \right\}.
\]
\end{lemma}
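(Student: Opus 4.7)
\smallskip

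\noindent\textbf{Proof plan.} The plan is to combine a local Rellich compactness argument with a tail estimate produced by the exponential weight. I first note that the embedding is continuous: since $e^{-2y}\le 1$ for $y\ge 0$, we have $\|U\|_L\le \|U\|_{L^2(\R^+)}\le \|U\|_{H^1_0(\R^+)}$.

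For compactness, let $\{U_n\}$ be a bounded sequence in $H^1_0(\R^+;\C)$, say $\|U_n\|_{H^1_0}\le M$. For every $R>0$, the restriction map $H^1_0(\R^+;\C)\to H^1((0,R);\C)$ is continuous, and by the classical Rellich--Kondrachov theorem the embedding $H^1((0,R);\C)\hookrightarrow L^2((0,R);\C)$ is compact. A standard diagonal extraction on the sequence of intervals $(0,R_k)$ with $R_k\to +\infty$ then yields a subsequence (not relabeled) and a function $U\in H^1_0(\R^+;\C)$ such that $U_n\to U$ in $L^2((0,R);\C)$ for every $R>0$. In particular, since $e^{-2y}\le 1$,
\[
\int_0^R e^{-2y}|U_n-U|^2\,dy \le \int_0^R |U_n-U|^2\,dy \xrightarrow[n\to\infty]{} 0.
\]

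The heart of the argument is to control the tail $y\ge R$ uniformly in $n$, and here the exponential weight carries the full burden. By the one-dimensional Sobolev embedding $H^1(\R^+)\hookrightarrow L^\infty(\R^+)$ (applied to real and imaginary parts), there exists a constant $C_S$ such that $\|U_n\|_{L^\infty}\le C_S\|U_n\|_{H^1_0}\le C_S M$, and the same holds for $U$. Therefore
\[
\int_R^\infty e^{-2y}|U_n-U|^2\,dy \le (2C_S M)^2\int_R^\infty e^{-2y}\,dy = 2C_S^2M^2\, e^{-2R},
\]
which can be made arbitrarily small uniformly in $n$ by choosing $R$ large. Splitting $\|U_n-U\|_L^2$ into the integrals over $(0,R)$ and $(R,+\infty)$, and combining the two estimates above, we conclude $\|U_n-U\|_L\to 0$, which proves the compactness of the embedding.

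\smallskip

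\noindent\textbf{Expected main obstacle.} There is no substantial obstacle: the only point requiring a touch of care is ensuring uniform smallness of the tail, which is handled cleanly by the $L^\infty$-bound coming from the one-dimensional Sobolev embedding together with the integrable weight $e^{-2y}$. The rest is a textbook Rellich-plus-diagonal argument.
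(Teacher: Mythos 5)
Your proof is correct and follows essentially the same strategy as the paper's: split the $L$-norm into a near-field part on $(0,R)$ handled by the classical compact embedding $H^1\hookrightarrow L^2$, and a far-field tail controlled uniformly by the integrable weight $e^{-2y}$. The only tactical differences are minor — the paper bounds the tail with the $L^2(\R^+)$ norms rather than the $L^\infty$ embedding you invoke, and it avoids the diagonal extraction by passing first to a weakly convergent subsequence in $H^1_0$ and then proving convergence of the $L$-norms.
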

\begin{proof}
Let $\{u_n\}_{n\in\N} \subset H^1_0(\R^+;\C)$ be a weakly converging sequence and let $u$ be its limit. Since the embedding of $H^1_0$ in $L$ is clearly continuous, $u_n \rightharpoonup u$ in $L$ and in order to show that $u_n \to u$ in $L$ we just need to prove the convergence of the norms. Let
\[
    d_n = \left|\int_{y>0} e^{-2y} u_n^2 - \int_{y>0} e^{-2y} u^2 \right|.
\]
Observe that $\{d_n\}_n$ is a positive sequence. We have that
\[
\begin{split}
    d_n &\leq \int_{y>0} e^{-2y} |u_n^2- u^2| = \int_{0}^T e^{-2y} |u_n^2- u^2| + \int_{T}^\infty e^{-2y} |u_n^2- u^2|\\
    &\leq \int_{0}^T e^{-2y} |u_n^2- u^2| + e^{-2T} ( \|u_n\|_{L^2}^2 + \| u\|_{L^2}^2) \leq \int_{0}^T e^{-2y} |u_n^2- u^2| + 2C e^{-2T}
\end{split}
\]
for any $T>0$. Since $H^1(0,T)$ is compactly embedded in $L^2(0,T)$, we conclude that there exists $\{\eps_{n,T}\}_n$ such that $\eps_{n,T} \to 0$ and
\[
    d_n \leq \eps_{n,T} + 2 C e^{-2T}.
\]
To conclude, for any given $\delta > 0$ we can find $T > 0$ such that $C e^{-2T} < \delta/2$ and subsequently $\bar n$ such that $\eps_{n,T} \leq \delta / 2$ for any $n \geq \bar n$. This implies that for any $n \geq \bar n$ we have that $0 \leq d_n \leq \delta$, that is
\[
    \lim_{n \to +\infty} d_n = 0 \implies \int_{y>0} e^{-2y} u^2 = \lim_{n\to+\infty} \int_{y>0} e^{-2y} u_n^2
\]
and thus we conclude the strong convergence of the sequence $\{u_n\}_{n\in\N}$.
\end{proof}

Exploiting this compact embedding we can show the following weighted Poincar\'{e} inequality.
\begin{lemma}\label{lem coer}
    Let $a >0$ and $b \in \R$, then
    \[
        \int_{y> 0} |u'|^2 + (a^2-b e^{-2y})u^2 \geq 0
    \]
    for any $u \in H^1_0(\R^+)$ as long as
    \[
        b \leq (j_{a,1})^2
    \]
    where $j_{a,1}$ is the first (positive) zero of the Bessel function of first kind of order $a$.
\end{lemma}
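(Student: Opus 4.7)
The plan is to reduce the inequality to a standard Sturm--Liouville eigenvalue problem for Bessel's operator via a change of variables. The case $b \le 0$ is trivial since both $a^2$ and $-be^{-2y}$ are non-negative, so I focus on $0 < b \le j_{a,1}^2$.

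First I would perform the substitution $t = e^{-y}$, setting $v(t) := u(-\ln t)$ for $t \in (0,1]$. Then $dy = -dt/t$, $u'(y) = -t\, v'(t)$, and $e^{-2y} = t^2$. A direct computation transforms the integrals as
\[
\int_0^\infty |u'|^2\,dy = \int_0^1 t\,|v'(t)|^2\,dt,\quad
\int_0^\infty a^2 u^2\,dy = \int_0^1 \frac{a^2}{t}\,v(t)^2\,dt,\quad
\int_0^\infty b e^{-2y} u^2\,dy = \int_0^1 b\,t\,v(t)^2\,dt.
\]
The Dirichlet condition $u(0)=0$ translates into $v(1)=0$, while the decay of $u$ at $+\infty$ gives control of $v$ near $t=0$ in the weighted norm $\int_0^1 t\,v^2\,dt$. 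Hence the inequality to prove becomes
\[
\int_0^1 \left[ t\,|v'|^2 + \frac{a^2}{t}\,v^2\right]dt \;\ge\; b\int_0^1 t\,v^2\,dt,
\qquad v(1)=0,
\]
which is the Rayleigh-quotient formulation of a weighted eigenvalue problem.

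Next I would recognize this as the classical Sturm--Liouville problem associated to Bessel's equation: the Euler--Lagrange equation of the quotient is
\[
-(t\,v')' + \frac{a^2}{t}\,v = \lambda\, t\, v \quad\text{on }(0,1),\qquad v(1)=0,
\]
whose regular solutions are $v(t) = J_a(\sqrt{\lambda}\,t)$. The boundary condition $v(1)=0$ forces $\sqrt{\lambda} = j_{a,n}$ for some $n\ge 1$, giving the discrete spectrum $\lambda_n = j_{a,n}^2$. Standard Sturm--Liouville theory (min-max principle applied to the compactly embedded weighted space) then yields
\[
\inf_{v\not\equiv 0,\ v(1)=0}\ \frac{\displaystyle\int_0^1 \left[t\,|v'|^2 + \frac{a^2}{t}v^2\right]dt}{\displaystyle\int_0^1 t\,v^2\,dt}\ =\ \lambda_1 \ =\ (j_{a,1})^2.
\]
Since by hypothesis $b\le j_{a,1}^2 = \lambda_1$, the sought inequality follows immediately.

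The main technical point is to justify the variational framework at $t=0$, where both the weight $1/t$ in the potential term and the degeneracy $t$ in the principal part are singular. I expect this to be the main obstacle: specifically, one must check that if $u\in H^1_0(\R^+)$ then $v$ lies in the natural energy space
$
\{v : \int_0^1 t|v'|^2 + (a^2/t)v^2\,dt < \infty,\ v(1)=0\}
$
(which is the form domain of the Bessel operator), and that test functions from this space are approximated by the images of $H^1_0(\R^+)$. This is a standard density argument based on Hardy's inequality and truncation near $t=0$; alternatively, the quadratic form on $H^1_0(\R^+)$ inherits exactly the same Friedrichs extension as the Bessel form on $(0,1)$ through the change of variables, so that the infimum coincides. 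Once this identification is established, the spectral characterization of $j_{a,1}^2$ as the first eigenvalue concludes the proof.
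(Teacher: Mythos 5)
Your proof is correct and follows essentially the same route as the paper's: both reduce the claim to showing that the first eigenvalue of the associated variational problem is $(j_{a,1})^2$, with $J_a$ appearing as the extremal function. Your explicit substitution $t=e^{-y}$ converts the problem into the textbook singular Sturm--Liouville problem for Bessel's operator on $(0,1)$, whereas the paper keeps the $y$-variable and identifies the minimizer via a power-series ansatz $\sum_n c_n\,e^{-(2n+a)y}$ (which is exactly the Bessel series in disguise); the function-space concern you flag is in fact immediate, since under the change of variables $\int_0^1 t\,|v'|^2\,dt=\int_0^\infty|u'|^2\,dy$ and $\int_0^1 (a^2/t)\,v^2\,dt=a^2\int_0^\infty u^2\,dy$, so the form domain is carried over to $H^1_0(\R^+)$ exactly.
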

\begin{proof}
The statement is equivalent to proving that
\begin{equation}\label{eqn minim}
    (j_{a,1})^2 = \inf_{u \in H^1_0(\R^+)} \left\{ \int_{y> 0} |u'|^2 + a^2 u^2 : \int_{y>0} e^{-2y} u^2 = 1 \right\}.
\end{equation}
The existence of a minimizers $u \in H^1_0(\R^+)$ follows directly from the embedding in Lemma \ref{lem comp emb}. As the functional and the constraint are even, we can assume that the minimizer $u$ is positive. Standard regularity results imply that the function $u$ is also smooth and strictly positive in $\R^+$. Let $\lambda \geq 0$ be the minimum of \eqref{eqn minim}. We have that $u \in H^1_0(\R^+)$ is a solution of
\[
    \begin{cases}
        - u'' +(a^2 - \lambda e^{-2y}) u = 0\\
        u(0) = 0, \; u(y) > 0 \text{ for $y > 0$}.
    \end{cases}
\]
We argue as in Lemma \ref{lem:bessel}. We look for a solution defined by the series
\[
    u(y) = \sum_{n \geq 0} c_n e^{-(2n+a) y} \qquad \text{ where $c_n \in \R$ for $n \in \N$}
\]
We first make some formal computations, plugging this expression directly into the equation. We find that the coefficients $c_n$ must satisfy the following recursive relation for $n \geq 1$
\[
     c_{n} (2n+a)^2 = c_n a^2 - c_{n-1}  \lambda
\]
which is verified for instance by letting
\[
    c_{n} = \frac{(-1)^n}{n!\Gamma(n+1+a)} \left(\frac{\sqrt{ \lambda }}{2} \right)^{2n+a} \qquad \forall n \in \N
\]
thus leading us to the solution
\[
    u(y) = \sum_{n\in\N}\frac{(-1)^n}{n!\Gamma(n+1+a)} \left(\frac{\sqrt{ \lambda }}{2} e^{-y} \right)^{2n+a} = J_{a}\left(\sqrt{ \lambda } e^{-y}\right).
\]
We recall that if $a > 0$, then $J_a(0) = 0$. This gives that for any $a > 0$
\[
    \lim_{y\to+\infty}u(y) = 0.
\]
One can easily check that the series does converge in $H^1(\R^+)$ to its sum $u$. We only need to ensure that
\[
    u(0) = 0 \qquad \text{and} \qquad u(y) > 0 \quad \text{for any $y > 0$}.
\]
In terms of the function $J_a$ these conditions together mean that $\sqrt{ \lambda }$ has to be the first (positive) zero for $J_a$, that is
\[
    \sqrt{ \lambda } = j_{a,1} \iff  \lambda = (j_{a,1})^2. \qedhere
\]

\end{proof}

We can also show a similar Poincar\'{e} inequality for semi-infinite rectangles.
\begin{lemma}\label{lem eigen rect}
For any $a > 0$ and $b \in \R$ we consider the semi-infinite rectangle
\[
	Q_{a,b} = \left(-a/2, a/2 \right) \times (b, +\infty)
\]
and the corresponding functional space
\[
    H^1_0(Q_{a,b}) = \left\{ u \in H^1(Q_{a,b}) : u = 0 \text{ on } \partial Q_{a,b} \right\}.
\]
We have
\[
   \inf_{u \in H^1_0(Q_{a,b})} \left\{ \int_{Q_{a,b}} |\nabla u|^2 : \int_{Q_{a,b}} e^{-2y} u^2 = 1 \right\} = e^{2b} \left(j_{\frac{\pi}{a}, 1}\right)^2.
\]
\end{lemma}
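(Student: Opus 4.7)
My plan is to handle this by separation of variables, reducing the claim to the one-dimensional Poincar\'{e} inequality already established in Lemma \ref{lem coer}.

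First I would normalize away the parameter $b$. The translation $(x,y)\mapsto(x,y-b)$ maps $Q_{a,b}$ onto $Q_{a,0}$ isometrically, preserves $\int|\nabla u|^2$, and transforms the weighted norm by a factor $e^{-2b}$: explicitly, if $\tilde{u}(x,\tilde{y})=u(x,\tilde{y}+b)$ then $\int_{Q_{a,b}}e^{-2y}u^2=e^{-2b}\int_{Q_{a,0}}e^{-2\tilde{y}}\tilde{u}^2$. Hence it suffices to prove
\[
\inf_{u\in H^1_0(Q_{a,0})}\left\{\int_{Q_{a,0}}|\nabla u|^2:\int_{Q_{a,0}}e^{-2y}u^2=1\right\}=\left(j_{\pi/a,1}\right)^2.
\]

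Next I would decompose any $u\in H^1_0(Q_{a,0})$ in the orthonormal basis of sines in the transverse direction:
\[
u(x,y)=\sum_{k\ge1}u_k(y)\,e_k(x),\qquad e_k(x)=\sqrt{\tfrac{2}{a}}\sin\!\left(\tfrac{k\pi(x+a/2)}{a}\right),
\]
where $u_k\in H^1_0(\R^+)$ since $u$ vanishes on $\{y=0\}$ and on the vertical sides. Parseval and orthogonality then give
\[
\int_{Q_{a,0}}|\nabla u|^2=\sum_{k\ge1}\int_0^\infty\!\Bigl(|u_k'|^2+\tfrac{k^2\pi^2}{a^2}|u_k|^2\Bigr)\,dy,\qquad
\int_{Q_{a,0}}e^{-2y}u^2=\sum_{k\ge1}\int_0^\infty e^{-2y}|u_k|^2\,dy.
\]
Applying Lemma \ref{lem coer} (with parameter $k\pi/a$) term by term yields
\[
\int_0^\infty|u_k'|^2+\tfrac{k^2\pi^2}{a^2}|u_k|^2\,dy\ge\left(j_{k\pi/a,1}\right)^2\int_0^\infty e^{-2y}|u_k|^2\,dy\ge\left(j_{\pi/a,1}\right)^2\int_0^\infty e^{-2y}|u_k|^2\,dy,
\]
using the classical monotonicity $\nu\mapsto j_{\nu,1}$ is strictly increasing on $[0,+\infty)$. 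Summing in $k$ gives the lower bound on the infimum.

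For sharpness, I would exhibit a minimizing function: take $u_\ast(x,y)=v_\ast(y)\,e_1(x)$ where $v_\ast(y)=J_{\pi/a}(j_{\pi/a,1}e^{-y})$ is the minimizer produced in the proof of Lemma \ref{lem coer} (with the appropriate choice of $a$). A direct substitution shows that all the inequalities above become equalities for $u_\ast$, so after normalization $u_\ast$ attains the value $(j_{\pi/a,1})^2$. No step is particularly delicate; the only pedantic point is justifying that the sine expansion of a generic $H^1_0(Q_{a,0})$ function converges in $H^1$, which is standard by truncation in $y$ and density of smooth compactly supported functions, combined with Lemma \ref{lem comp emb} to control the weighted norm.
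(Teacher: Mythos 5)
Your proof is correct and reduces the problem to Lemma \ref{lem coer} via a transverse Fourier decomposition, which is essentially the paper's route. The structural difference is that you obtain the lower bound directly by applying Lemma \ref{lem coer} mode by mode together with the monotonicity of $\nu \mapsto j_{\nu,1}$, whereas the paper first proves that the infimum is attained (via a compactness argument in the spirit of Lemma \ref{lem comp emb}), derives the Euler--Lagrange equation, and uses positivity of the minimizer to force the transverse factor to be the first Dirichlet eigenfunction $\cos(\pi x/a)$; this sidesteps any appeal to the monotonicity of Bessel zeros. Your variant is a touch cleaner in that the lower bound is established without needing existence of a minimizer a priori; both then exhibit the same separable test function $J_{\pi/a}(j_{\pi/a,1}e^{-y})\cos(\pi x/a)$ for sharpness.
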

\begin{proof}
By the same compactness argument of Lemma \ref{lem comp emb}, we can show that the infimum is attained by a function $u \in H^1_0 (Q_{a,b})$ which,  by standard results, is also positive and smooth in $Q_{a,b}$. Up to a translation in $y$, the function $u$ is then a positive solution of
\[
    \begin{cases}
        -\Delta u = \lambda e^{-2b} e^{-2y} u &\text{in $Q_{a,0}$}\\
        u = 0 &\text{on $\partial Q_{a,0}$}
    \end{cases}
\]
for some $\lambda \geq 0$. By separation of variable we can easily show that $u$ is of the form
\[
    u(x,y) = \cos\left(\frac{\pi}{a} x \right) v(y)
\]
where the new unknown function $v \in H^1_0(\R^+)$ solves
\[
    \begin{cases}
        - v'' + \left(\frac{\pi^2}{a^2} - \lambda e^{-2b} e^{-2y}\right) v = 0\\
        v(0) = 0, \; v(y) > 0 \text{ for $y > 0$}.
    \end{cases}
\]
By Lemma \ref{lem coer} we conclude that
\[
    \lambda  e^{-2b} =  \left(j_{\frac{\pi}{a},1}\right)^2. \qedhere
\]
\end{proof}

\section*{Acknowledgments} 
G.V.~acknowledges support from the project Vain-Hopes within
the program VALERE - Universit\`a degli Studi della Campania ``Luigi Vanvitelli'', by the Portuguese
government through FCT/Portugal under the project PTDC/MAT-PUR/1788/2020

A.Z.~acknowledges support from the ANR via the project 
{Indyana} under grant agreement ANR-21-CE40-0008 and the project SHAPO under grant 
agreement ANR-18-CE40-0013. 

Work partially supported by the INdAM - GNAMPA group.


\end{document}